\newcommand{\Q}{{\mathbb Q}}
\newcommand{\R}{{\mathbb R}}
\newcommand{\Z}{{\mathbb Z}}
\newcommand{\C}{{\mathbb C}}
\newcommand{\fg}{{\mathfrak g}}
\newcommand{\fh}{{\mathfrak h}}
\newcommand{\cR}{{\mathcal R}}
\renewcommand{\O}{{\mathcal O}}
\newtheorem{theorem}{Theorem}
\newtheorem{lemma}{Lemma}
\newtheorem{corollary}{Corollary}
\newtheorem{proposition}[theorem]{Proposition}
\theoremstyle{definition}
\newtheorem{example}{Example}
\newtheorem{definition}{Definition}
\numberwithin{definition}{section}
\numberwithin{equation}{section}
\theoremstyle{remark}
\newtheorem{remark}{Remark}
\begin{document}

\title[Irreducibility of irreducible characters]{On
  the irreducibility of irreducible characters\\ of simple Lie algebras}

\author{C.~S.~Rajan}

\address{Tata Institute of Fundamental  Research, Homi Bhabha Road,
Bombay - 400 005, INDIA.}  \email{rajan@math.tifr.res.in}

\subjclass{Primary 17B10; Secondary 20G05}

\begin{abstract}
We establish an irreducibility property for the characters of finite
dimensional, irreducible representations of simple Lie algebras (or
simple algebraic groups) over the complex numbers, i.e., that  the
characters of irreducible representations  are irreducible after
dividing out by (generalized) Weyl denominator type factors.

For $SL(r)$ the irreducibility result is the following: let
$\lambda=(a_1\geq a_2\geq \cdots a_{r-1}\geq 0)$ be the highest weight of an
irreducible rational representation $V_{\lambda}$ of $SL(r)$.
 Assume that the integers $a_1+r-1, ~a_2+r-2,
\cdots, a_{r-1}+1$ are relatively prime. Then the character
$\chi_{\lambda}$ of $V_{\lambda}$ is strongly irreducible in the
following sense: for any natural number $d$, the function
$\chi_{\lambda}(g^d), ~g\in SL(r,\C)$ is irreducible in the ring of
regular functions of $SL(r,\C)$.
\end{abstract}

\maketitle

\section{Introduction}
In \cite{R}, the following unique factorization
property of tensor products
of irreducible representations of a complex simple Lie algebra $\frak
g$  was proved:

\begin{theorem} \label{thm:tensor}
Let  $V_1, \cdots, V_n, ~W_1, \cdots, W_m$ be non-trivial irreducible
representations of $\frak g$ (resp. rational irreducible
 representations of $GL(r)$ for $r\geq 2$ with trivial determinant ) such
that
\[ V_1\otimes \cdots\otimes V_n\simeq W_1\otimes
\cdots\otimes W_m,\] as $\frak g$ (resp. $GL(r)$)-modules.
Then $n=m$, and there is a
permutation $\sigma$ of $\{1,\cdots, n\}$ such that $V_i\simeq
W_{\sigma(i)}$.
\end{theorem}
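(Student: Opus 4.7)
\emph{Plan.} Take characters of both sides; the isomorphism of $\mathfrak{g}$-modules yields the identity $\prod_{i=1}^{n}\chi_{V_i}=\prod_{j=1}^{m}\chi_{W_j}$ in the representation ring $R(\mathfrak{g})=\Z[P]^{W}$, a polynomial ring in the fundamental characters $\chi_{\omega_1},\dots,\chi_{\omega_r}$. I would embed $R(\mathfrak{g})$ into the full group ring $\Z[P]$ of the weight lattice, which is a Laurent polynomial ring in $\mathrm{rank}(\mathfrak{g})$ variables, hence a unique factorization domain. The Weyl character formula $\chi_\lambda\,A_\rho=A_{\lambda+\rho}$ (with $A_\mu=\sum_{w\in W}\mathrm{sgn}(w)e^{w\mu}$) lets me multiply through by $A_\rho^{N}$ and reformulate the identity as
\[ A_\rho^{\,N-n}\prod_{i}A_{\lambda_i+\rho}\;=\;A_\rho^{\,N-m}\prod_{j}A_{\mu_j+\rho} \]
inside $\Z[P]$. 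Using the product expansion $A_\rho=\prod_{\alpha>0}(e^{\alpha/2}-e^{-\alpha/2})$, whose factors are distinct primes in $\Z[\tfrac{1}{2}P]$, together with the divisibility $A_\rho\mid A_{\lambda+\rho}$ (from the $W$-alternating property), the Weyl-denominator contributions on the two sides can be matched, reducing the problem to comparing the prime factorizations of $\prod\chi_{V_i}=\prod\chi_{W_j}$ in the UFD $\Z[P]$.

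To conclude, I would induct on $n+m$. The leading-term expansion $\chi_\lambda=e^\lambda+(\text{strictly lower weight terms})$ shows that the highest weight of the product is $\sum_i\lambda_i=\sum_j\mu_j$ with multiplicity one. Choosing $\lambda_1$ maximal in the dominance order among the $\lambda_i$, the strategy is to show that some $\mu_{j}$ must equal $\lambda_1$; cancelling $\chi_{\lambda_1}$ from both sides (the character ring is an integral domain) and applying the inductive hypothesis then finishes the argument.

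The main obstacle lies in the identification of a $\mu_j$ equal to $\lambda_1$. A naive application of unique factorization in $\Z[P]$ fails, because an irreducible character need not be an irreducible element of $\Z[P]$: for instance, already in $SL(2)$ one has $\chi_{2}=(\chi_{1}-1)(\chi_{1}+1)$, yet neither factor is itself the character of a representation. Overcoming this gap requires exploiting $W$-invariance together with more refined information about the extremal-weight support of $\chi_\lambda$, for example by evaluating at carefully chosen regular semisimple elements of the maximal torus, or by tracking how the Weyl orbit of $\lambda_1+\rho$ contributes to the expansion of the product, in order to detect the tensor-factorization type of an irreducible character inside the multiplicative monoid generated by all irreducible characters. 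This step — promoting unique factorization in the ambient UFD $\Z[P]$ to unique factorization within the sub-semigroup of irreducible characters — is where the real content of the theorem lies.
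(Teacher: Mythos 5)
Your set-up (taking characters, passing to the UFD $\Z[P]$, multiplying through by powers of the Weyl denominator, and using the product expansion of $S(\rho)$) is the standard reduction and matches the opening of the argument here, but the proof has a genuine gap, and you name it yourself: you never supply the step that identifies some $\mu_j$ with $\lambda_1$. Unique factorization in $\Z[P]$ does not do this, precisely because $\chi_\lambda$ is not an irreducible element of $\Z[P]$ (your $SL(2)$ example $\chi_2=(\chi_1-1)(\chi_1+1)$, equivalently the cyclotomic factorization of $(x_1^{a+1}-x_2^{a+1})/(x_1-x_2)$). The missing content is exactly the two results this paper is about: (i) the irreducibility theorem (Theorem \ref{thm:main}), which says that after dividing $S(\lambda)$ not just by $S(\rho)$ but by the full generalized Weyl-denominator factor $D(\lambda)$ (which for non-simply-laced systems also involves the dual weight $\tilde{\rho}$), the quotient $C(\lambda)$ is irreducible in $\C[P]^W$; and (ii) the uniqueness theorem (Theorem \ref{thm:unique}), which says $\lambda$ is recovered from $C(\lambda)$ when $C(\lambda)\neq 1$. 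With these, the product identity $\prod_i S(\lambda_i+\rho)/S(\rho)=\prod_j S(\mu_j+\rho)/S(\rho)$ forces a matching $\lambda_1+\rho=\mu_j+\rho$ whenever $\lambda_1+\rho$ is not a multiple of $\rho$ or $\tilde{\rho}$, and one cancels and inducts, exactly as you intend. Your proposal gestures at "more refined information about extremal-weight support" or evaluation at special torus elements, but gives no argument; the dominance-order/leading-term observation alone cannot rule out, say, a rearrangement of the cyclotomic-type prime factors across different characters.

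Two further points you would still need even granting the key irreducibility input. First, the residual case in which every shifted weight $\lambda_i+\rho$, $\mu_j+\rho$ is a multiple of $\rho$ or $\tilde{\rho}$ requires a separate argument: here $C=1$ carries no information, and the paper concludes by the explicit factorization $S(d\rho)=\prod_{\alpha>0}(e^{d\alpha/2}-e^{-d\alpha/2})$ together with the coprimality of factors attached to distinct roots (Corollary \ref{cor:coprimeroots}), comparing, root by root, the resulting one-variable cyclotomic identities. Second, in the non-simply-laced cases the "obvious" factors are not only the $S(d\rho)$ with $d\mid d(\lambda)$ but also the dual denominators $S(e\tilde{\rho})$; ignoring these, the quotient by $S(d(\lambda)\rho)$ alone need not be irreducible, so your plan of matching only the $A_\rho$-contributions is not sufficient as stated. (Note also that the deduction in this paper is conditional on Assumption NMFG for certain weights of $F_4$ and $G_2$; the unconditional proof of the tensor-product theorem in the literature proceeds by a different induction on rank via cofactor expansions.)
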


In this introduction we restrict ourselves to $GL(r)$.
Given a rational representation $V$ of $GL(r)$, let
\[ \chi_V(g)={\rm Tr}(V(g)), \quad g\in GL(r), \]
denote the character of $V$.  Since the character determines the representation
upto isomorphism, the hypothesis of the foregoing theorem can be
reformulated as an equality of products of characters,
\begin{equation}\label{prodchar}
 \chi_{V_1} \cdots \chi_{V_n}= \chi_{W_1} \cdots\chi_{W_m}.
\end{equation}
Now, the character is  a regular $GL(r)$-invariant
function of $GL(r)$. The unique decomposition  of tensor products
will follow if the non-trivial characters
 of irreducible rational representations of $GL(r), ~r\geq 2$
are irreducible in the ring of invariant regular functions on
$GL(r)$.

But this turns out to be manifestly false for $GL(2)$, where the characters are
given by cyclotomic type
homogeneous polynomials in two variables of the
form $(x_1^{a+1}-x_2^{a+1})/(x_1-x_2)$;
hence factorizable over $\C$.

 Upto twisting by a power of the
determinant, we can assume that the irreducible representations
$V_{\lambda}$  of $GL(r)$ are parametrized by their
highest weights,
\[ \lambda =(a_1, \cdots a_r), ~~a_1\geq
a_2  \geq \cdots\geq  a_r=0,\]
where $a_i$ are nonnegative integers.
Let $T$ denote the diagonal torus of $GL(r)$ consisting of
diagonal matrices with diagonal entries given by $x=(x_1,\cdots,
x_r)$. Let $W$ denote the Weyl group of $(GL(r), T)$,
the  symmetric group on $r$-variables, acting by
permutations on $T$.
Restricting to the torus $T$, gives an isomorphism of the algebra
of conjugacy invariant regular functions on $GL(r)$ onto the algebra
of Weyl group invariant
regular functions on $T$; hence we can consider the characters
restricted to $T$.
Let $\epsilon:~W\to \Z/2\Z$ be the sign homomorphism.
For a weight  $\mu=(b_1,\cdots, b_r)$ with $b_1, \cdots, b_r$ non-negative
integers, define the Schur-Weyl function $S({\mu})(x)$ by,
\[ S(\mu)(x)=\sum_{\sigma\in W}\epsilon(\sigma)
x_1^{b_{\sigma(1)}}\cdots
x_r^{b_{\sigma(r)}}=\mbox{det}(x_i^{b_j}).  \]
This is an alternating polynomial in variables $x_1, \cdots, x_r$,
 which vanishes if
$b_i=b_j$ for a pair of indices $i\neq j$.
Let
\[\rho=(r-1, r-2, \cdots, 0)\]
be the `Weyl weight'.  The Weyl denominator
function $S(\rho)(x)$ is the Vandermonde determinant and  has a product
decomposition,
\[ S(\rho)(x)=\prod_{i<j}(x_i-x_j).\]
Given any weight $\mu$ of $GL(r)$, it is easy to observe that  the Weyl
denominator $S(\rho)$ divides $S(\mu)$
in the polynomial ring in $r$-variables.
The Schur-Weyl character formula for $GL(r)$ gives the restriction of
the character $\chi_{\lambda}$ of $V_{\lambda}$ to the torus $T$  by the
formula,
\begin{equation}\label{eqn:wcf}
\begin{split}
\mbox{Weyl character formula}:
\chi_{\lambda}(x)&=S(\lambda+\rho)(x)/S(\rho)(x)\\
 &= \mbox{det}(x_i^{a_j+r-j})/\mbox{det}(x_i^{r-j})\quad x\in
 T.
\end{split}
\end{equation}

Based on the factorization of characters for $GL(2)$ governed by
cyclotomic theory,
the initial approach in the general case was
to look for divisibility relations amongst
the irreducible characters parametrized by appropriate
`divisibility properties' amongst the highest weights.
Now let  $\lambda =
(a_1>\cdots  a_{r-1}> a_r=0)$ be a dominant regular  weight for
$GL(r)$.
Denote by $d(\lambda)$ the greatest common divisor of the integers
$a_i$:
\[ d(\lambda):=\mbox{g.c.d.}(a_1, \cdots, a_r).\]
 We observe that  the Schur-Weyl sum
satisfies  the  `scaling' relation,
\begin{equation}
S(d\lambda)(x_1,\cdots, x_r)=S(\lambda)(x_1^d,\cdots, x_r^d),
\end{equation}
where $d\lambda=(da_1, \cdots, da_r)$.
Combined with the divisibility relation
$S(\rho)|S(\lambda)$ for any weight $\lambda$,
 this implies  the divisibility relation,
\begin{equation}\label{eqn:divisibility}
 S(d\rho)|S(\lambda) \quad \mbox{if}\quad
  d|d(\lambda).
\end{equation}

After  a few calculations for $GL(3)$,
it turns out that apart from the
obvious divisibilty relation given by  Equation
(\ref{eqn:divisibility}),
other divisibility relations are hard to come
by.  If $\lambda$ is not a multiple of $\rho$,
define $C(\lambda)$ as the quotient,
\[C(\lambda)=S(\lambda)/S(d(\lambda)\rho).\]
This defines a symmetric polynomial in $r$-variables.
The experimental observations for $GL(3)$ leads us to expect
and prove the following theorem:

\begin{theorem}\label{thm:glrirr} Let $\lambda =(a_1, \cdots a_r),
~~a_1> a_2 > \cdots > a_r=0$ be a dominant, regular integral weight
for  $GL_r$.  If $\lambda$ is not a multiple of $\rho$ viz.,
$\lambda\neq d(\lambda)\rho$, then $C(\lambda)$ is absolutely
irreducible, i.e., irreducible  in the ring  $\C[x_1, \cdots, x_r]$.
\end{theorem}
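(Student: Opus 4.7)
My opening move is to reduce the problem to a statement about Schur polynomials. Setting $d:=d(\lambda)$ and $\mu:=\lambda/d$, the scaling relation $S(d\mu)(x)=S(\mu)(x^d)$ yields $S(\lambda)(x)=S(\mu)(x^d)$ and $S(d\rho)(x)=S(\rho)(x^d)$, so
\[
C(\lambda)(x) \;=\; \frac{S(\mu)(x^d)}{S(\rho)(x^d)} \;=\; \chi_{\mu-\rho}(x_1^d,\ldots,x_r^d).
\]
Here $\mu$ is a regular dominant weight with $\gcd$ of coordinates equal to one and $\mu\neq\rho$ by hypothesis, so $\nu:=\mu-\rho$ is a non-zero dominant weight. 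The theorem is thus equivalent to the absolute irreducibility of $\chi_\nu(x_1^d,\ldots,x_r^d)$ in $\C[x_1,\ldots,x_r]$.

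I would split the proof into two parts. \textbf{Part A.} Prove that $\chi_\nu(y_1,\ldots,y_r)$ is absolutely irreducible in $\C[y_1,\ldots,y_r]$. Since $\chi_\nu$ is $S_r$-invariant, any irreducible factorization is permuted by $S_r$ and the product over each orbit is a symmetric divisor, so it suffices to rule out proper symmetric divisors. I would attack this using the Jacobi--Trudi expression $\chi_\nu=\det(h_{\nu_i+j-i})$ together with controlled specializations: setting $y_r\to 0$ reduces $\chi_\nu$ to a Schur polynomial for $GL(r-1)$ (restricting the shape of any hypothetical factor), while letting $y_{r-1}\to y_r$ tests factors against transpositions. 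Coupled with degree and Newton polytope bookkeeping, the hypotheses $\nu\neq 0$ and $r\geq 3$ should force every such factorization to be trivial.

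\textbf{Part B.} Transfer irreducibility through the $d$-th power substitution. Geometrically, the map $\phi_d\colon\C^r\to\C^r$, $(x_i)\mapsto(x_i^d)$, is a finite Galois cover with group $G_0=(\Z/d\Z)^r$, \'etale away from the coordinate hyperplanes. Granted Part A, the hypersurface $V(\chi_\nu)\subset\C^r$ is irreducible, and the preimage $V(\chi_\nu(x^d))=\phi_d^{-1}V(\chi_\nu)$ is irreducible as a variety if and only if the $G_0$-cover above $V(\chi_\nu)\cap(\C^*)^r$ is connected, equivalently if and only if
\[
\pi_1\bigl(V(\chi_\nu)\cap(\C^*)^r\bigr)\longrightarrow\pi_1\bigl((\C^*)^r\bigr)=\Z^r\twoheadrightarrow G_0
\]
is surjective. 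For each $i$, the restriction $\chi_\nu|_{y_i=0}$ is a non-zero Schur polynomial for $GL(r-1)$, so $V(\chi_\nu)$ meets $\{y_i=0\}$ in codimension one and a small loop around that intersection in $V(\chi_\nu)\cap(\C^*)^r$ projects to the $i$-th standard generator of $G_0$; hence the image of $\pi_1$ contains all generators. Squarefreeness of $\chi_\nu(x^d)$ (needed to upgrade an irreducible variety to an irreducible polynomial) follows from squarefreeness of $\chi_\nu$ and \'etaleness of $\phi_d$ on the locus where $V(\chi_\nu)$ meets the coordinate hyperplanes only in codimension $\geq 2$ in $\C^r$.

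The main obstacle is Part A: general Schur polynomials are not always irreducible, and for $r=2$ the cyclotomic factorization of $h_k(y_1,y_2)$ shows the statement fails, so any proof must use $r\geq 3$ in an essential way. Systematically ruling out symmetric-divisor factorizations under the specializations above is delicate, and isolating where the $\gcd=1$ hypothesis on $\mu$ eliminates residual cyclotomic-type obstructions is the hardest combinatorial input.
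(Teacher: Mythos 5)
Your opening reduction is correct: by the scaling relation, $C(\lambda)=\chi_\nu(x_1^d,\ldots,x_r^d)$ with $d=d(\lambda)$ and $\nu=\lambda/d-\rho$, so the theorem is exactly the statement that $\chi_\nu(x^d)$ is irreducible when $\nu+\rho$ has coprime entries and $\nu\neq 0$ (this is the reformulation in the paper's abstract). But what follows is a plan, not a proof, and the gap is precisely at the theorem's core. Part A, the irreducibility of the Schur polynomial $\chi_\nu$ under the coprimality hypothesis, \emph{is} the case $d=1$ of the theorem; saying that Jacobi--Trudi, specializations $y_r\to 0$, $y_{r-1}\to y_r$, and Newton-polytope bookkeeping ``should force'' triviality of symmetric factorizations does not engage with the actual difficulty, namely where the hypothesis $\gcd(\nu+\rho)=1$ enters (you acknowledge this yourself). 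In the paper this is exactly the hard part: it is handled by an induction on the rank via cofactor expansions, whose base case is a genuinely arithmetic statement about cyclotomic-type polynomials (Proposition \ref{prop:arith}, the heart of the whole argument, which is where coprimality is used through the combination $UV=\Phi_{e:d}$, $UX+VY=\Phi_{f:d}$ forcing $(e,f)>d$), combined with an Eisenstein-type divisibility argument, the exclusion of monic invariant factorizations (Proposition \ref{prop:nonmonicglr}), a coprimality/uniqueness input (Theorem \ref{thm:unique}), and finally the descent from symmetric to arbitrary factorizations in $\C[x_1,\ldots,x_r]$ (Section \ref{sec:noninv}). None of these ingredients, nor substitutes for them, appear in your sketch.

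Part B also contains an unproved and, as stated, incorrect step. The claim that a small loop around $V(\chi_\nu)\cap\{y_i=0\}$ inside $V(\chi_\nu)\cap(\C^*)^r$ maps to the $i$-th standard generator of $(\Z/d\Z)^r$ requires (a) a component of that intersection not contained in any other coordinate hyperplane and (b) that $y_i$ vanish to order one along it on $V(\chi_\nu)$; neither is established, and the analogous statement fails for perfectly irreducible polynomials such as $y_1y_2+y_3$, whose pullback $x_1^dx_2^d+x_3^d$ is reducible even though the restriction to $y_3=0$ is non-zero. Likewise $y_1+y_2$ versus $x_1^d+x_2^d$ shows the transfer is not a formal consequence of irreducibility of $\chi_\nu$: the monodromy computation must use specific (and here unverified) properties of Schur polynomials, i.e., arithmetic content that your argument never supplies. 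Note also that the paper does not split the problem into $d=1$ plus a covering argument at all: the general $d$ is carried through the whole induction via the quotients $S(\lambda)/S(d\rho)$ and the cyclotomic lemma, so even the architecture you propose would need both halves built from scratch.
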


This theorem has been proved R. Dvornicich and U. Zannier \cite{DZ}.  However
their motivation and proof are completely different.

An uniqueness result can also be established (see Theorem
\ref{thm:unique}), that $\lambda$ can be recovered from $C(\lambda)$
as long as $C(\lambda)\neq 1$. As a corollary one gets a proof of
Theorem \ref{thm:tensor}.

We now give a brief indication of  the  proof of Theorem
\ref{thm:glrirr}, and refer to Section \ref{sec:keyprops} for further
details.  The proof  proceeds by induction on $r$. The inductive step
is carried out using
the cofactor expansion  of the determinant expression for
$S(\lambda)$.
The cofactor expansion gives us a polynomial in one
variable with coefficients that are again of the form $S(\mu)$ with
$\mu$ a weight for $GL(r-1)$.

There are three parts to the proof. The heart of the proof is the
following (see Proposition \ref{prop:key}):
suppose $\mu, ~\eta$ are dominant regular weights for
$GL(r-1)$ with $\eta=\mu+(c,0, \cdots, 0)$ for some natural number
$c$,  such that  both $d(\mu)$
and $d(\eta)$ are divisible by a natural number $d$. Assume further that
there exists non-monomial symmetric polynomials $U, ~V$  and polynomials $X, ~Y$
satisfying  the following system of equations:
\[ UV=S(\mu)/S(d\rho) \quad \text{and} \quad UX+VY=S(\eta)/S(d\rho).\]
Then the conclusion is that $(d(\mu), d(\eta))>d$. The proof
of this proposition uses some arithmetical ideas when $r=3$. For
higher ranks, the key observation is to realize that an inductive
proof is possible and this in turn depends crucially on the fact
that the Weyl denominator $S(\rho)$ divides any $S(\lambda)$.

The rest of the proof is built around this proposition. An
use of Eisenstein criterion allows us to  rule out symmetric
`monic' factorizations, i.e., symmetric factorizations
$C(\lambda)=U_0V_0$ such that either both the leading coefficients of
$U_0$ and $V_0$ (with respect to the `cofactor expansion'
expressing them as a polynomial in
$x_1$ with coefficients polynomials in the $r-1$ variables $x_2,
\cdots, x_r$) or their
constant coefficients are non-monomial (see Proposition
\ref{prop:nonmonic} and Proposition \ref{prop:nonmonicglr}).

Finally, one reduces a non-symmetric factorization to either of the above
propositions.

The outline of this paper is as follows: in the next section, we state
the theorem for a general simple Lie algebra, where we consider the
characters as elements in the algebra of the Weyl group invariants of
the group algebra of the weight lattice.  In Section
\ref{sec:groups}, the irreducibility theorem is stated in the context
of regular functions of a simple algebraic group, and a proof is given
assuming the statement for the Lie algebra. In Section
\ref{sec:cofactor} we recall the technique of cofactor expansions,
which allows an inductive set up based on the rank for the proof of
the irreducibility result.  Section
\ref{sec:keyprops} contains the statements of the key propositions and
a proof of the main Theorem \ref{thm:main} assuming the
validity of these propositions.

The overall idea of the proof is the
same as that of $GL(r)$; in fact, it is simpler in some places for the
Lie algebras of type $D$ and $E$. However, both the statement and the
proofs of the various propositions are more complicated for the non-simply laced
Lie algebras.

Section \ref{sec:sl2} gives a proof of
the key Proposition \ref{prop:key} for the root system $sl_2$
using some facts from arithmetic. The whole proof emanates from this
proposition, and how it helps in establishing the irreducibility
property for $GL(3)$ (the interested reader can read this section
first, especially Remark \ref{rmk:idea}).

Section \ref{sec:pfkeyprop} gives a proof of Proposition \ref{prop:key}
in the general case by an inductive argument, a `neat swindle' using
the universal divisibility of the Weyl denominator.

The proof of Proposition
\ref{prop:nonmonic} ruling out the existence of invariant monic
factorizations is given in Section
\ref{sec:nonmonic}. This is an application of the method of proof
used in the classical Eisenstein criterion for irreducibility of
polynomials.

The uniqueness result Theorem \ref{thm:unique} is
proved in Section \ref{sec:unique}. This section also contains a
preliminary result used in the proofs of the various propositions: the
fact that $S(\lambda)$ is separable, say for $GL(s)$ with $s<r$.
In this case, it  is quite easy to observe assuming Theorem
\ref{thm:glrirr} and the factorization of $S(d\rho)$.

 Finally in Section
\ref{sec:noninv} we extend the proof of the irreducibility result for
$GL(r)$ from
the ring of symmetric functions in $r$-variables to the  polynomial
ring in $r$-variables.

\section{Simple Lie algebras} \label{sec:maintheorem}
In this section, we consider the general case of a
simple Lie algebra $\fg$ over $\C$, fix the notations and state the
main irreducibility theorem, which requires to be modified when the
Lie algebra has at least two roots of different lengths. 

\subsection{Notation}
We first fix the notation and recall relevant facts from the theory of
root systems (see \cite{B},\cite{H}).

 Let $\fg$ be a simple Lie algebra over $\C$, and let
  $\fh$ be a Cartan subalgebra of $\fg$. Denote by
$\Phi\subset \fh^*$ the roots of the pair $(\fg, \fh)$, and let $E$
the real subspace of $\fh^*$ generated by $\Phi$. The dual of the
restriction of the Killing form to $\fh\times \fh$ defines a
non-degenerate symmetric bilinear form on $E$. With respect to
this inner product,
$\Phi$ defines a root system in $E$.

Denote by $\Phi^+\subset \Phi$
(resp. $\Delta\subset \Phi^+$; $\Phi^*\subset
E^*; ~\Phi^{*+}; ~\Delta^*$ )
the subset of positive roots with respect to some ordering of
the root system
(resp. a base for $\Phi^+$;  the set of coroots;  positive
coroots and fundamental coroots). Given a root $\alpha\in \Phi$,
$\alpha^*$ will denote the corresponding coroot.

 Denote by $<.,.>: E^*\times E\to \R$ the duality pairing.
For any root
$\alpha$,  we have $<\alpha^*, \alpha>=2$, and the pairing takes
values in integers when the arguments consist of roots and co-roots.

Let $W$ (resp. $ W^*$) denote the Weyl group of the (resp. dual) root system.
The Weyl group $W$ (resp. $ W^*$) is the subgroup of ${\rm
  Aut}(E)$ (resp. ${\rm Aut}(E^*)$) generated by the
reflections $s_{\alpha}$ of $E$ (resp. $s_{\alpha^*}$ of  $E^*$) defined by
\[s_{\alpha}(u)=u-<\alpha^*, u>\alpha\quad  \text{and}\quad
s_{\alpha^*}(x)=x-<x,\alpha>\alpha^*,\] where $x\in E^*$ and $u\in
E$.  We have $s_{\alpha}(\Phi)\subset \Phi$ and
$s_{\alpha^*}(\Phi^*)\subset \Phi^*$.
 There is a natural isomorphism between the Weyl groups of the root
system and the dual root system, given by $\alpha\mapsto \alpha^*$ and
$s_{\alpha^*}=^t\!s_{\alpha}$ the transpose of $s_{\alpha}$. We
identify the two actions of the Weyl group.

\begin{remark} Sometimes we formulate the propositions for a based
  root system $\cR=(E, \Phi, \Delta)$ instead of $\fg$, and at times
  we refer to the root system just by $\Phi$.
\end{remark}

Let $l(w)$ denote the length of an element in the Weyl group, given by
the least length of a word in the $s_{\alpha}, ~\alpha\in \Delta$
defining $w$. Let $\epsilon(w)=(-1)^{l(w)}$ be the sign character of
$W$.

Denote by $P\subset E$ (resp. $P^+\subset P$, $P^{++}\subset P_+$,
$P^*\subset E^*,~P^{*, ++} $ the lattice of integral weights (resp. dominant
integral weights, dominant regular weights, lattice of integral
co-weights, dominant regular coweights).
Let $r=|\Delta|$ be the rank of $\Phi$. For a
fundamental root $\alpha\in \Delta$, denote by $\omega_{\alpha}$ (resp.
$\omega_{\alpha}^*$) the corresponding fundamental weight
(resp. coweight) defined by
\[ <\beta^*, \omega_{\alpha}>=\delta_{\alpha\beta}\quad  \text{and}
\quad <\omega_{\alpha}^*,\beta>=\delta_{\alpha\beta}, \quad \alpha,
 ~\beta\in \Delta.\]
 The
fundamental weights form a $\Z$-basis for $P$. A weight  $\lambda$ can
 be expressed as a sum,
\[\lambda=\sum_{\alpha\in \Delta}m_{\alpha}(\lambda)\omega_{\alpha},\]
where $m_{\alpha}(\lambda)=<\alpha^*, \lambda>$ are the
coefficients of $\lambda$ with respect to the basis of $P$ determined
by $\Delta$.  The weight $\lambda$ is regular (resp. dominant) if for any
$\alpha\in \Delta$, $m_{\alpha}(\lambda)\neq 0$ (resp. 
 $m_{\alpha}(\lambda)\geq 0$).

\subsection{Schur-Weyl elements}
For any ring $A$ and a commutative group $X$, let $A[X]$ denote the
group algebra of $X$ with coefficients in $A$. We work with
multiplicative (exponential) notation. A basis for $A[X]$ is given
by the elements indexed by
$e^{x}$ for $x\in A$. The group law is expressed by $e^x.e^y=e^{x+y},
~x, ~y\in X$.
The action of the Weyl group is seen as,
$we^{\lambda}=e^{w\lambda}$.

For a weight $\lambda\in P$, define the Schur-Weyl element  $S(\lambda)\in
\Z[P]$ as,
\[ S(\lambda)= \sum_{w\in W}\epsilon(w)e^{w(\lambda)}.\]
The Schur-Weyl elements are alternating with respect to the action of
the Weyl group $W$ on $\Z[P]$,
\[ \sigma(S(\lambda))=\epsilon(\sigma)S(\lambda), \quad \sigma\in W. \]

The group algebra $A[P]$ can be identified with a Laurent polynomial
ring in $r$-variables over $A$, where $r={\rm dim}_{\R}(E)$ is the
rank of $\fg$. In particular, $\C[P]$ is a unique factorization
domain.

\subsection{Weyl Character Formula}

Let $V$ be a finite dimensional $\fg$-module.
  With respect to the
action of $\fh$, there is  a decomposition,
\[ V=\oplus_{\pi\in P} V^{\pi},\]
\[ \text{where} \quad V^{\pi}=\{v\in V\mid Xv=\pi(X)v, ~X\in \fh \}. \]
are the weight spaces of $V$.
The linear forms $\pi$ for which $V^{\pi}$ are non-zero
are the weights of $V$, and $V^{\pi}$ is the subspace of $V$
 consisting of eigenvectors of $H$ with weight $\pi$.   The formal
 character $\chi_V\in \Z[P]$ of $V$ is defined as,
\[ \chi_{{V}}= \sum_{\pi\in P}m(\pi)e^{\pi},\]
where $m(\pi)=\text{dim}(V^{\pi})$ is the multiplicity of $\pi$.
The
character is invariant under the action of the Weyl group.

The irreducible finite dimensional
 $\fg$-modules are indexed by elements in $\lambda\in P^+$, given by
Cartan-Weyl theory. To each dominant, integral weight $\lambda$, we
denote the corresponding irreducible $\fg$-module with highest weight
$\lambda$ by $V_{\lambda}$ and the formal character of $V_{\lambda}$
 by $\chi_{\lambda}$.
The Weyl character formula gives the formula for the
 formal character $\chi_{\lambda}$:
\begin{equation}\label{wcf}
\mbox{\em Weyl character formula:}\quad
\chi_{\lambda}=S({\lambda+\rho})/S({\rho}),
\end{equation}
where $\rho$ is the Weyl weight defined by the equations,
\[ \rho=\frac{1}{2}\sum_{\alpha\in \Phi^+}\alpha=\sum_{\alpha\in
  \Delta}\omega_{\alpha}.\]

\begin{remark}
It can be seen as an application of the Weyl character formula (or
directly by induction), that for $\lambda\in P^{++}$,
the element $S(\lambda)$ is non-vanishing.
\end{remark}

\subsection{Divisibility}
 For any weight
$\lambda$, define
\begin{equation}
d(\lambda)=\mbox{g.c.d.}\{m_{\alpha}(\lambda)\mid \alpha\in \Delta\}.
\end{equation}
Equivalently $d(\lambda)$ can be defined as the largest integer $d$ for
which $d\rho$ divides $\lambda$ (a regular weight $\mu$ is
said to divide a weight $\lambda$ if for every $\alpha\in \Delta$, the
coefficient $m_{\alpha}(\mu)$ divides $m_{\alpha}(\lambda)$).

\begin{example} \label{ex:glrlambda}
 For $SL(r)$, denote by $\omega_i, ~1\leq i\leq r-1$
  the set of fundamental weights given by the highest weights of the
  $i$-th exterior power of the natural representation of $SL(r)$. For
  $\lambda =(a_1, a_2, \cdots, a_r)$ a weight of $GL(r)$, the
  coefficients are given by $m_i(\lambda)=a_i-a_{i+1}$. If the weight
  $\lambda$ is normalized so that $a_r=0$, then the definition of
  $d(\lambda)$ given in the previous section agrees with the above
  definition.
\end{example}

The following proposition, especially the divisibility aspect
 is of fundamental importance to us in this paper, needed in the
 formulation as well as the proof of the main theorem (in establishing
 the inductive argument in the proof of Proposition \ref{prop:key}):
\begin{proposition} \label{prop:weyldenominator}
(a) (Factorization of Weyl denominator).
 For any positive integer $d$, there is a
  factorisation in the Laurent polynomial ring $\Z[\frac{1}{2}P]$,
\begin{align}
 S(d\rho)& =\prod_{\alpha\in \Phi^+}(e^{d\alpha/2}-e^{-d\alpha/2})\\
& = e^{-d\rho}\prod_{\alpha\in \Phi^+}(e^{d\alpha}-1).
\end{align}

(b) (Divisibility by  Weyl denominator).
For any weight $\lambda\in
P$ with greatest common divisor $d(\lambda)$ and any natural number
$d$ dividing $d(\lambda)$  the element
$S(d\rho)$ divides $S(\lambda)$ in $\Z[P]^W$.

(c) (Separability of Weyl denominator) The generalized Weyl
denominators $S(d\rho)$ are separable elements in the ring $\C[P]$
\end{proposition}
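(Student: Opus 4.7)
The plan is to derive all three parts from the classical Weyl denominator identity $S(\rho)=\prod_{\alpha\in\Phi^+}(e^{\alpha/2}-e^{-\alpha/2})$ in $\Z[\tfrac{1}{2}P]$, combined with the ring endomorphism $\phi_d$ of $\Z[\tfrac{1}{2}P]$ defined by $e^{\nu}\mapsto e^{d\nu}$. For part (a), I apply $\phi_d$ directly to the classical identity: the left side becomes $\sum_{w}\epsilon(w)e^{dw\rho}=S(d\rho)$ and the right side becomes $\prod_{\alpha}(e^{d\alpha/2}-e^{-d\alpha/2})$, which is the first asserted form; factoring $e^{-d\alpha/2}$ out of each factor then gives the second.

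For part (b), the hypothesis $d\mid d(\lambda)$ means $d\mid m_\alpha(\lambda)$ for every $\alpha\in\Delta$, so $\lambda=d\mu$ for a unique $\mu\in P$. The classical divisibility $S(\rho)\mid S(\mu)$ in $\Z[P]$, with quotient in $\Z[P]^W$, holds because $S(\mu)$ is $W$-antisymmetric: anti-invariance under each simple reflection $s_\alpha$ forces $(e^{\alpha/2}-e^{-\alpha/2})\mid S(\mu)$, and these $|\Phi^+|$ factors are pairwise coprime in the UFD $\C[\tfrac{1}{2}P]$ since distinct positive roots point in distinct primitive directions. Writing $S(\mu)=S(\rho)\cdot q$ with $q\in\Z[P]^W$ and applying $\phi_d$ then yields $S(d\mu)=S(d\rho)\cdot\phi_d(q)$, which is the asserted divisibility.

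For part (c), by (a) we have $S(d\rho)=(\text{unit})\cdot\prod_{\alpha\in\Phi^+}(e^{d\alpha}-1)$ in $\C[P]$. To exhibit each $e^{d\alpha}-1$ as a product of distinct irreducibles, I write $\alpha=m_\alpha\beta_\alpha$ with $\beta_\alpha\in P$ primitive and $m_\alpha\geq 1$, so that $e^{d\alpha}-1=\prod_{k=0}^{dm_\alpha-1}(e^{\beta_\alpha}-\zeta_{dm_\alpha}^k)$. Each linear factor is irreducible in $\C[P]$, since extending $\beta_\alpha$ to a $\Z$-basis of $P$ realises the factor as a degree-one Laurent polynomial in the variable $e^{\beta_\alpha}$. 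Two such factors $e^{\beta_\alpha}-c$ and $e^{\beta_{\alpha'}}-c'$ can be associate only when a monomial unit $\eta e^{\nu}$ identifies them; comparing the two monomials forces $\beta_\alpha=\pm\beta_{\alpha'}$, and the reducedness of $\Phi$ together with $\alpha,\alpha'\in\Phi^+$ then gives $\alpha=\alpha'$ and $c=c'$.

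The only genuine obstacle is the last step of (c): one must handle roots that are not primitive in $P$ (for example $\alpha=2\omega$ in type $A_1$) by factoring $e^{d\alpha}-1$ into $dm_\alpha$ linear pieces rather than $d$. Once this refinement is made, both the irreducibility of each linear piece and their pairwise non-association follow from the reducedness of the root system, which is what delivers separability in $\C[P]$.
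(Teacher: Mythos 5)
Your proposal is correct and takes essentially the same route as the paper: part (a) is the classical denominator identity, and part (b) is, exactly as in the paper's proof, Bourbaki's universal divisibility $S(\rho)\mid S(\mu)$ transported by the scaling map $e^{\nu}\mapsto e^{d\nu}$ (note only that in your sketch of the Bourbaki step you need anti-invariance under the reflection $s_{\alpha}$ for \emph{every} positive root $\alpha$, not just the simple ones, in order to produce all $|\Phi^{+}|$ factors). For (c), the paper's Lemma \ref{lem:coprime} and Corollary \ref{cor:coprimeroots} establish coprimality of $e^{k\alpha}-1$ and $e^{l\beta}-1$ for $\alpha\neq\pm\beta$ by a degree argument, which is the same underlying idea as your explicit factorization into pairwise non-associate linear factors; your extra care with roots that are imprimitive in $P$ simply makes explicit the squarefreeness of each $e^{d\alpha}-1$, which the paper leaves implicit.
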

\begin{proof}
Part (a) is well known. For $d=1$, (b) is
a restatement of \cite[Proposition 2, Chapter VI, Section 3.3,
  page 185]{B}. For any natural number $d$, there are the  `scaling' maps
 $[d]:\Z[P]\to \Z[P]$ induced by
multiplication by $d$ on $P$. One has,
\[ [d](S(\lambda))=S(d\lambda).\]
Part (b) now follows from the `universal' divisibility
  that $S(\rho)$ divides $S(\lambda)$.

A proof of (c) is given later as Corollary \ref{cor:coprimeroots} in
Section \ref{sec:unique}.
\end{proof}

\subsection{Duality}
It was pointed out by P. Deligne that for a general simple Lie
algebra, there are extra factors which arise whenever there are
at least two roots of different lengths in  a root system for
$\fg$. These extra factorisations arise from duality: the weight
lattices of the root system and its dual are seen to be commensurable
lattices in $E$ as $W$-modules. 
Normalizing the square of the length of the shorter root to be $2$,
this can be seen by the standard identification
$\alpha^*=2\alpha/(\alpha, \alpha)$. If the root system is not simply
laced, then these lattices are not isomorphic. This gives raise to
new factorizations as the Weyl denominator weights $\rho$ and $\rho^*$
are not rational multiples of each other.

Let $\alpha_l$ (resp. $\alpha_s$) be a long (resp. short) root in
$E$. Define,
\begin{equation}
m(\Phi)=(\alpha_l, \alpha_l)/(\alpha_s, \alpha_s).
\end{equation}
The classification of root systems imply that the possible values of
$m(\Phi)$ are,
\[
m(\Phi)=\begin{cases} 1 &\text{if $\Phi$ is of type $A, ~D,~ E$},\\
2 &\text{if $\Phi$ is of type $B, ~C, ~F$},\\
3 &\text{if $\Phi$ is of type $G$}.
\end{cases}
\]
The map
\begin{equation}\label{eqn:dualroot}
\alpha^*\mapsto 2\alpha/(\alpha, \alpha).
\end{equation}
provides a $W$-equivariant identification of the co-weight lattice
$P^*$ with a lattice in $E$ (for the extended action of
$W$ on $E$). Via this identification, if the short root is normalized
to have the square of its length as $2$,  we get
\begin{equation}
\omega_{\alpha}^*=\begin{cases} \omega_{\alpha}  &\text{if $\alpha$ is a short
    root},\\
\omega_{\alpha}/m(\Phi) &\text{if $\alpha$ is a long root}
\end{cases}
\end{equation}
We identify $P^*$ with its image in $E$. The lattices $P$ and $P^*$
are commensurable in $E$. Let
\begin{equation}
\tilde{P}=m(\Phi)P^*.
\end{equation}
Let $\rho^*$ denote the Weyl weight of the dual root system defined
by $\Phi^*$. Define
\begin{equation}
\tilde{\rho}= m(\Phi)\rho^*=\sum_{\alpha\in \Delta_s}
m(\Phi)\omega_{\alpha}+\sum_{\alpha\in \Delta_l}\omega_{\alpha},
\end{equation}
where $\Delta_s=\Delta\cap \Phi_s$ (resp. $\Delta_l=\Delta\cap
\Phi_l$), and $\Phi_s$ (resp. $\Phi_l$)  is the subset of $\Phi$
consisting of the short (resp. long) roots. From the formula for $m(\Phi)$,
it follows that $\tilde{\rho}=\rho$ precisely for the simply laced
root systems (of type $A, ~D, ~E$). It can be seen that
the element $\tilde{\rho}$ is a generator of the group
$\Q \rho^*\cap P$.

The Weyl group invariance
of the pairing allows us to transfer the factorization and
divisibility relations for the Schur-Weyl sums of  integral multiples
of $\rho^*$  in the group ring $\Z[P^*]$
to the group ring $\Z[P]$ of the lattice $P$. The point is that
when there are at least two roots having different lengths, this
gives us new factorizations and divisibility relations.
This is expressed by the following proposition `dual' to Proposition
\ref{prop:weyldenominator}:

\begin{proposition} \label{prop:dualrhodiv}

(a) (Factorization of dual Weyl denominator).
 For any positive integer $d$, there is a
  factorisation in the Laurent polynomial ring $\Z[\frac{1}{2}P]$,
\begin{align}
 S(d\tilde{\rho})& =\prod_{\alpha\in
   \Phi_s^+}(e^{dm\alpha/2}-e^{-dm\alpha/2})
\prod_{\alpha\in \Phi_l^+}(e^{d\alpha/2}-e^{-d\alpha/2})\\
& = e^{-d\tilde{\rho}}\prod_{\alpha\in \Phi_s^+}
(e^{dm\alpha}-1)\prod_{\alpha\in \Phi_l^+}(e^{d\alpha}-1).
\end{align}

(b) (Divisibility by dual Weyl denominator).
If $d\tilde{\rho}|\lambda$, then  $S(d\tilde{\rho})$ divides $S(\lambda)$ in
$\Z[P]$.

(c) (Separability of dual Weyl denominator) The elements $
S(d\tilde{\rho})$ are separable in the ring $\C[P]$.
\end{proposition}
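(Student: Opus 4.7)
The plan is to deduce all three parts from the corresponding statements of Proposition \ref{prop:weyldenominator} applied to the dual root system $\Phi^*$, transferred to $\Z[P]$ via the composition of the $W$-equivariant identification $\alpha^* \mapsto 2\alpha/(\alpha,\alpha)$ with the scaling-by-$m$ homomorphism $[m]=[m(\Phi)] : \Z[P^*] \to \Z[\tilde P] \subseteq \Z[P]$. This composition is a $W$-equivariant ring homomorphism sending $\rho^* \mapsto \tilde\rho$ and, more generally, $S^*(\mu^*) \mapsto S(m\mu^*)$ for every $\mu^* \in P^*$, where $S^*(\cdot)$ denotes the Schur-Weyl element in $\Z[P^*]$ for $\Phi^*$.

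For (a), Proposition \ref{prop:weyldenominator}(a) applied to $\Phi^*$ factors
\[
S^*(d\rho^*) = \prod_{\alpha^* \in \Phi^{*+}}\bigl(e^{d\alpha^*/2} - e^{-d\alpha^*/2}\bigr)
\]
in $\Z[\tfrac{1}{2}P^*]$. Applying $[m]$ turns the left-hand side into $S(d\tilde\rho)$ and the typical factor into $e^{dm\alpha^*/2} - e^{-dm\alpha^*/2}$. With the normalization $(\alpha_s,\alpha_s) = 2$ one checks $m\alpha^* = m\alpha$ for short $\alpha$ and $m\alpha^* = \alpha$ for long $\alpha$; splitting the product according to $\Phi_s^+$ versus $\Phi_l^+$ then produces the claimed formula.

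For (b), I would first exhibit $\lambda$ in the form $m\nu^*$ for a suitable $\nu^* \in P^*$. The hypothesis $d\tilde\rho\mid\lambda$ together with $\tilde\rho = \sum_{\alpha\in\Delta_s}m\omega_\alpha + \sum_{\alpha\in\Delta_l}\omega_\alpha$ says $dm \mid m_\alpha(\lambda)$ for short $\alpha$ and $d \mid m_\alpha(\lambda)$ for long $\alpha$. Using $\omega_\alpha = \omega_\alpha^*$ for short $\alpha$ and $\omega_\alpha = m\omega_\alpha^*$ for long $\alpha$, this displays $\lambda = m\nu^*$ with $\nu^* \in dP^*$, so in particular $d \mid d(\nu^*)$. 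Proposition \ref{prop:weyldenominator}(b) applied to $\Phi^*$ gives $S^*(d\rho^*)\mid S^*(\nu^*)$ in $\Z[P^*]^W$, and applying $[m]$ transfers the divisibility to $S(d\tilde\rho)\mid S(\lambda)$ in $\Z[P]$.

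Part (c) follows by the same mechanism as for part (c) of Proposition \ref{prop:weyldenominator}: the factorization of (a) combined with the fact that $\C[P]$ is a unique factorization domain reduces separability to the pairwise coprimality of the individual factors $e^{dm\alpha}-1$ (for short $\alpha$) and $e^{d\alpha}-1$ (for long $\alpha$); this is handled by the same argument as Corollary \ref{cor:coprimeroots} in Section \ref{sec:unique}, applied with the rescaled multiset $\{m\alpha : \alpha \in \Phi_s^+\} \cup \Phi_l^+$ in place of $\Phi^+$. The main delicate point throughout is the bookkeeping in setting up $[m]$: one must verify that the composition with the duality identification really intertwines the two $W$-actions and sends Schur-Weyl elements to Schur-Weyl elements. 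Once this is confirmed, all three parts reduce mechanically to their counterparts for $\Phi^*$.
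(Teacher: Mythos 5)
Your argument is correct and follows exactly the route the paper intends: the paper states this proposition as the ``dual'' of Proposition \ref{prop:weyldenominator} and leaves the proof to the duality transfer described just before it, namely the $W$-equivariant identification $\alpha^*\mapsto 2\alpha/(\alpha,\alpha)$ combined with scaling by $m(\Phi)$, sending $\rho^*$ to $\tilde{\rho}$ and Schur--Weyl elements to Schur--Weyl elements, with part (c) resting on the same coprimality argument as Corollary \ref{cor:coprimeroots}. Your write-up simply makes this implicit transfer explicit, including the verification $m\alpha^*=m\alpha$ for short roots and $m\alpha^*=\alpha$ for long roots, and is in order.
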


We refer to elements of the form $S(d\rho)$ or $S(d\tilde{\rho})$ as
elements of {\em (generalized) Weyl denominator type} in $\Z[P]$.

\subsection{Factors of $S(\lambda)$ of Weyl denominator type}
For $\lambda\neq \rho$ a dominant regular weight in $P^{++}$,
define
\begin{equation}\label{def:dlambda}
 D(\lambda)={\rm l.c.m.}_{\substack{d\rho|\lambda, ~d\rho\neq
    \lambda\\
e\tilde{\rho}|\lambda, ~e\tilde{\rho}\neq\lambda}}
    (S(d\rho),S(e\tilde{\rho})),
\end{equation}
where the l.c.m. is taken in the Laurent polynomial ring $\Z[P]^W$.
By \cite[Theorem 1, Ch. VI, Section 4, page 188]{B}, the ring
$\Z[P]^W$ is isomorphic to a polynomial ring over $\Z$ in
$r$-variables:
\[ \Z[P]^W\simeq \Z[\{\omega_{\alpha}\mid \alpha\in \Delta\}].\]
Hence the units of the ring $\Z[P]^W$ are isomorphic to $\{1, -1\}$.
With respect to the dominant order (a weight is non-negative if
it can be written as a non-negative linear combination of positive roots),
 the leading term of $S(\lambda)$
is given by $e^{\lambda}$.
The least common multiple in the definition of $D(\lambda)$ in the
above equation is taken to be the element whose coefficient of the leading
monomial occuring in $D(\lambda)$ with respect to the dominant
ordering is positive (equal to $1$). Define,
\begin{equation} \label{def:clambda}
 C(\lambda)=S(\lambda)/D(\lambda),
\end{equation}
to be the quotient of the Schur-Weyl sum $S(\lambda)$ divided by the
obvious factors arising from the Weyl character formula and duality.
Since $S(\lambda)$ is alternating, it follows that $C(\lambda)\in
\Z[P]^W$.

We now look at the structure of $D(\lambda)$.
If $\lambda=m(\Phi)\lambda^*$ for some
$\lambda^*\in P^*$,  define
\begin{equation}\label{eqn:d*}
 d^*(\lambda)=d(\lambda^*)={\rm g.c.d}\{m_{\alpha}^*(\lambda^*)\mid
\alpha\in\Delta\}.
\end{equation}
Equivalently $d^*(\lambda)$ can be defined as the largest integer $d$ for
which $d\tilde{\rho}$ divides $\lambda$.
Note that we have the following inclusions:
\begin{equation}\label{eqn:latticeinc}
 P^*\supset P\supset m(\Phi)P^*\supset m(\Phi)P.
\end{equation}

\begin{lemma}\label{lem:Dlambda}
Suppose that $\lambda\in P^{++}$ is neither  a multiple of  $\rho$ or
$\tilde{\rho}$. Then
\begin{equation}\label{Dlambda2}
D(\lambda)=\begin{cases} S(d(\lambda)\rho) &\text{if
  $\lambda\in m(\Phi)^iP\backslash m(\Phi)^{i+1}P^*$ for some $i\geq 0$},\\
S(d^*(\lambda)\tilde{\rho}),&\text{if $\lambda\in
  m(\Phi)^{i+1}P^*\backslash m(\Phi)^{i+1}P$ for some $i\geq 0$}.
\end{cases}
\end{equation}
\end{lemma}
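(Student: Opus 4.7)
The plan is to reduce the defining l.c.m.\ to a two-term expression $\mathrm{lcm}(S(d(\lambda)\rho),\,S(d^*(\lambda)\tilde\rho))$, show that one of the two terms divides the other, and conclude by a short numerical case analysis. For the reduction, note that if $d\mid d(\lambda)$ then $S(d\rho)\mid S(d(\lambda)\rho)$ (apply Proposition~\ref{prop:weyldenominator}(b) with $\lambda$ replaced by $d(\lambda)\rho$, which has g.c.d.\ $d(\lambda)$), and analogously on the dual side via Proposition~\ref{prop:dualrhodiv}(b). Hence each family in the defining l.c.m.\ collapses to its maximum, with the convention that the dual term is absent when $\lambda\notin m(\Phi)P^*$ and $d^*(\lambda)$ is undefined. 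The hypothesis that $\lambda$ is neither a multiple of $\rho$ nor of $\tilde\rho$ ensures that the indices $d=d(\lambda)$ and $e=d^*(\lambda)$ are admissible (the exclusions $d\rho\neq\lambda$, $e\tilde\rho\neq\lambda$ hold).

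Next I would show that one of the two terms divides the other. Using the product factorizations in Propositions~\ref{prop:weyldenominator}(a) and~\ref{prop:dualrhodiv}(a), together with the basic Laurent-ring observation that $(e^{a\alpha/2}-e^{-a\alpha/2})\mid(e^{b\alpha/2}-e^{-b\alpha/2})$ whenever $a\mid b$ (and that distinct positive roots yield independent factors in $\C[P]$), a root-by-root comparison gives
\[
S(d^*(\lambda)\tilde\rho)\mid S(d(\lambda)\rho)\iff m(\Phi)\,d^*(\lambda)\mid d(\lambda),\qquad S(d(\lambda)\rho)\mid S(d^*(\lambda)\tilde\rho)\iff d(\lambda)\mid d^*(\lambda).
\]
So the lemma reduces to establishing $m(\Phi)\,d^*(\lambda)\mid d(\lambda)$ in the first case and $d(\lambda)\mid d^*(\lambda)$ in the second.

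The final step is the numerical verification, crucially using that $m:=m(\Phi)\in\{2,3\}$ is prime. In the first case with $i\geq 1$, write $\lambda=m^i\mu$ with $\mu\in P\setminus mP^*$, and let $s,t$ denote the g.c.d.\ of the short, resp.\ long, coordinates of $\mu$; the stratum condition forces $m\nmid s$, and direct computation gives $d(\lambda)=m^i\gcd(s,t)$ and $d^*(\lambda)=m^{i-1}\gcd(s,mt)$. In the second case, write $\lambda=m^{i+1}\lambda^*$ with $\lambda^*\in P^*$ and let $s^*,t^*$ be the g.c.d.'s of the short, resp.\ long, coordinates of $\lambda^*$; the stratum condition $\lambda\notin m^{i+1}P$ becomes $m\nmid t^*$, and one gets $d(\lambda)=m^i\gcd(m\,s^*,t^*)$ and $d^*(\lambda)=m^i\gcd(s^*,t^*)$. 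Both cases then hinge on the elementary identity: \emph{for $m$ prime and $m\nmid a$, one has $\gcd(a,mb)=\gcd(a,b)$}, which follows by writing $a=ga'$, $b=gb'$ with $g=\gcd(a,b)$ and $\gcd(a',b')=1$, so $\gcd(a,mb)=g\gcd(a',m)$, and $m\nmid a$ with $m$ prime forces $m\nmid a'$, giving $\gcd(a',m)=1$. Plugging in yields $m(\Phi)\,d^*(\lambda)=d(\lambda)$ in the first case and $d(\lambda)=d^*(\lambda)$ in the second. The degenerate subcase $i=0$ of the first case (where $d^*(\lambda)$ is undefined) is handled directly: the l.c.m.\ is automatically $S(d(\lambda)\rho)$.

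The main obstacle is the careful bookkeeping with the interleaved lattice chain $P^*\supset P\supset mP^*\supset mP\supset m^2P^*\supset\cdots$, correctly translating stratum membership into the coprimality conditions on coordinate g.c.d.'s and keeping the formulas for $d(\lambda)$ and $d^*(\lambda)$ consistent across strata.
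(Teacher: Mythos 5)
Your proposal is correct and follows essentially the same route as the paper: translate the stratum condition into divisibility conditions on the coordinates $m_{\alpha}(\lambda)$, and use the divisibility properties of the (generalized) Weyl denominators to collapse the defining l.c.m. The only organizational difference is that you reduce to the two maximal factors and prove the exact identities $m(\Phi)\,d^*(\lambda)=d(\lambda)$ on the first stratum and $d^*(\lambda)=d(\lambda)$ on the second, comparing them root-by-root via the product factorizations, whereas the paper's proof dominates each factor of one family by a member of the other family dividing $\lambda$ without ever computing $d^*(\lambda)$; both arguments are sound.
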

\begin{proof}
An element $\lambda \in  m(\Phi)^iP\backslash m(\Phi)^{i+1}P^*$ for
some $i\geq 0$ iff
$m(\Phi)^i|m_{\alpha}(\lambda), ~\forall \alpha \in \Delta$ and
there exists a $\alpha\in \Delta_s$ such that $m(\Phi)^{i+1}$ does not
divide $m_{\alpha}(\lambda)$. If $i=0$, then
  $\tilde{\rho}$ does not divide $\lambda$.
For $i>0$, suppose $em(\Phi)^j\tilde{\rho}$
  divides $\lambda$, with $e$ coprime to $m(\Phi)$. This implies that
 $em(\Phi)^{j+1}|m_{\alpha}(\lambda)$ for all $\alpha\in
 \Delta_s$. Hence,  $j\leq i-1$, and it follows that
 $em(\Phi)^{j+1}\rho|\lambda$. But
  $em(\Phi)^j\tilde{\rho}$ divides  $em(\Phi)^{j+1}{\rho}$. Thus the
lcm can be taken amongst factors of the form $S(f\rho)$ with
$f\rho|\lambda$. Since $\lambda$ is not a multiple of $\rho$, the lcm
is given by $S(d(\lambda)\rho)$, and this proves the first case.

In the second case, if $em(\Phi)^j{\rho}$ divides $\lambda$ with $e$
coprime to $m(\Phi)$, then $j\leq i$. Since $\lambda\in
m(\Phi)^{i+1}P^*$, it follows that $\lambda$ is divisible by
$em(\Phi)^j\tilde{\rho}$, and hence the lcm can be taken with respect
to such factors. This establishes the second case.

\end{proof}

\subsection{The Main theorem}
Our aim is to show that if $\lambda\in P^{++}$ is neither a multiple
of $\rho$ or of $\tilde{\rho}$ then $C(\lambda)$ is absolutely
irreducible. However the proof we have does not prove this in full
generality and has a gap for a class of regular weights of $G_2$ and
$F_4$. We make the following assumption (see Proposition \ref{prop:nonmonic}):

{\em Assumption NMFG:} Consider the root systems given by $F_4$ and
$G_2$, and regular weights  $\lambda\in P^{++}$ be of the form,
\[ \lambda=u\omega_{\alpha}+v\omega_{\beta}+d(\lambda)\rho,
\quad d(\lambda)=(u,v)\]
where $\omega_{\alpha}$ (resp. $\omega_{\beta}$) is the fundamental
weight corresponding to the short (resp. long) corner root $\alpha$
(resp. $\beta$) in the Dynkin diagram such that the following
inequalities are satsified:
\[ m(\Phi)v \geq u+d(\lambda)\quad \text{and}\quad u\geq v+d(\lambda).\]
{\em Assumption NMFG} is that for this class of weights, any
$W$-invariant non-trivial factorization is non-monic
(see Definition
\ref{dfn:nonmonic} for the definition of a factorization to be non-monic).

The main theorem of this
paper is the following:
\begin{theorem}\label{thm:main}
With notation as above,
let $\lambda\in P^{++}$ be a dominant
regular weight for the root system $\Phi$. Assume further that
Assumption NMFG is valid.
If $\lambda\neq d\rho$ or $d\tilde{\rho}$ for some natural number
  $d$, then $C(\lambda)$ is absolutely irreducible, i.e.,
 it  is irreducible in the ring
$\C[P]^W$.
\end{theorem}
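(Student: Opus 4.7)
The plan is to induct on the rank $r=|\Delta|$ of $\Phi$, treating the three key Propositions \ref{prop:key}, \ref{prop:nonmonic}, and \ref{prop:nonmonicglr} (and Lemma \ref{lem:Dlambda}) as black boxes. Suppose for contradiction that $C(\lambda)=UV$ is a nontrivial factorization, equivalently $S(\lambda)=D(\lambda)\,UV$ with $U,V$ non-units. Since $\C[P]^W$ is a polynomial ring in the fundamental weights, its only invertible elements are scalars, so in the symmetric case both $U$ and $V$ may be assumed non-monomial. The first move is to pick a corner simple root $\alpha\in\Delta$ and apply the cofactor expansion of Section \ref{sec:cofactor}: viewing $S(\lambda)$ as a polynomial in the cofactor variable attached to $\alpha$, its coefficients become, up to signs and monomial twists, Schur--Weyl sums $S(\mu)$ for the rank-$(r-1)$ sub-root system $\Phi'$ obtained by removing $\alpha$. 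Any factorization of $C(\lambda)$ then induces factorizations of these coefficients, placing us in range of the inductive hypothesis and the key propositions.

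I would then split into cases according to the structure of $U,V$ in the cofactor variable. In the \emph{monic} case---both $U$ and $V$ have monomial leading and constant coefficients in this variable---Proposition \ref{prop:nonmonic} (and its $GL(r)$ counterpart \ref{prop:nonmonicglr}) rules out the factorization via an Eisenstein-style argument: a generalized Weyl-denominator prime would have to divide too many intermediate coefficients of the product, which is impossible; the genuine escape route is the exotic class of $F_4$ and $G_2$ regular weights covered by Assumption NMFG, which is exactly why that hypothesis appears. In the \emph{non-monic} case, the leading (or constant) coefficients of $U$ and $V$ are themselves non-monomial Schur--Weyl expressions for $\Phi'$. Reading off the top two coefficients of the cofactor expansion then yields dominant regular weights $\mu,\eta$ for $\Phi'$ with $\eta=\mu+(c,0,\dots,0)$, a common divisor $d$ of $d(\mu)$ and $d(\eta)$, and auxiliary elements $X,Y$ for which
\begin{equation*}
UV=S(\mu)/S(d\rho) \qquad\text{and}\qquad UX+VY=S(\eta)/S(d\rho).
\end{equation*}
Proposition \ref{prop:key} then forces $\gcd(d(\mu),d(\eta))>d$; unwinding this divisibility at the level of $\lambda$, and using Lemma \ref{lem:Dlambda} together with Proposition \ref{prop:dualrhodiv}, forces $\lambda$ to be a multiple of $\rho$ or $\tilde\rho$, contradicting the standing hypothesis.

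To close the argument, a possibly non-$W$-invariant factorization in $\C[P]$ is reduced to the symmetric one by the orbit-product trick: the $W$-action permutes irreducible factors of the $W$-invariant $C(\lambda)$, and products over $W$-orbits produce a symmetric factorization to which the preceding analysis applies, bootstrapping absolute irreducibility from the invariant statement. The main obstacle I expect is the uniform treatment of the non-simply-laced systems $B, C, F, G$, where $D(\lambda)$ has two possible shapes depending on whether $\lambda\in m(\Phi)P^*$, and both $S(d\rho)$ and $S(e\tilde\rho)$ can contribute denominators that interact through the cofactor expansion. Choosing the corner root $\alpha$ so that $\Phi'$ has usable structure, and keeping track of which of $d(\mu),d^*(\mu)$ controls the coefficient at each step, is the delicate part of the bookkeeping; this is also precisely where the Assumption NMFG exception surfaces.
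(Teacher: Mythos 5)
Your skeleton is essentially the paper's own: duality reduction via Lemma \ref{lem:Dlambda} to the case $D(\lambda)=S(d(\lambda)\rho)$, choice via Proposition \ref{prop:nonmonic} of a corner root along which both invariant factors have non-unit leading coefficients (with NMFG as the escape clause), Eisenstein-type divisibility in the cofactor expansion to produce the pair $UV=C(\mu,d)$ and $UX+VY=C(\eta,d)$, and then Proposition \ref{prop:key} to force a gcd jump. However, two steps are genuinely missing. First, Proposition \ref{prop:key} is not a statement over $\C$: its content is the arithmetic of Proposition \ref{prop:arith}, which requires the coefficient ring $\O$ to be one in which the rational primes up to $N$ are not units (so that $1-\zeta_{p^k}$ is a non-unit). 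Over $\C$ the statement you want to invoke is simply false: $C(\mu,d)$ is separable, so any splitting $UV$ has $U,V$ coprime and Bezout produces $X,Y$ with $UX+VY$ equal to anything whatsoever, with no constraint on gcd's. So before the key proposition can be applied you must descend the complex factorization $C(\lambda)=QR$ to a factorization over a suitable UFD $\O\subset\C$ in which the small primes are non-invertible; the paper does this with a Gauss's-lemma (Lefschetz-principle) claim, and your proposal never addresses it.

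Second, the endgame is not the one you describe. The contradiction does not come from ``forcing $\lambda$ to be a multiple of $\rho$ or $\tilde\rho$''; it comes from the identity $d(\lambda)=(d(\lambda^{\alpha_0}),m_{\alpha_0}(\lambda))$ (Equation \ref{eqn:gcd}), which says that the gcd which Proposition \ref{prop:key} forces to exceed $d$ is already equal to $d$ --- an immediate contradiction when $\alpha_0$ and its neighbour have compatible lengths --- while in the remaining cases ($\alpha_0$ long with short neighbour, or $\mu\in m(\Phi_{\alpha_0})^{i+1}P^*_{\alpha_0}$) one must track powers of $m(\Phi)$ and the contradiction obtained is $\lambda\in m(\Phi)^{i+1}P^*$, i.e.\ against the duality normalization of Remark \ref{rem:duality-red-pf}, not against the hypothesis that $\lambda$ is not a multiple of $\rho$ or $\tilde\rho$. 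Without this identity and the length/lattice bookkeeping, your ``unwinding'' step has no content. Two smaller points: Proposition \ref{prop:key} also has the hypothesis that one of the factors divides $C(e\rho,d)$, which has to be verified using the inductive irreducibility of $C(\mu,d(\mu))$ (it divides one factor, so the other divides the Weyl-denominator part); and your final orbit-product reduction from $\C[P]$ to $\C[P]^W$ is unnecessary for the theorem as stated, which asserts irreducibility only in $\C[P]^W$ --- the passage to $\C[P]$ is carried out in the paper only for $GL(r)$, and by a different argument.
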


\begin{remark}
As for the case of $GL(r)$, it should be possible to obtain the
irreducibility statement in the larger ring $\C[P]$, but we do not
carry out this reduction out here. Analogous irreducibility results
can be obtained for
characters of rational representations of simple algebraic groups $G$.
In fact, it is possible to obtain such irreducibility results in the
bigger ring of regular  functions of $G$, rather than the ring of
regular invariant functions of $G$ (see Theorem \ref{thm:irrgroups}).
\end{remark}

\begin{remark}\label{rem:duality-red-pf}
Lemma \ref{lem:Dlambda} allows us by the use of duality to reduce the
proof of Theorem \ref{thm:main} to the case that
\[\lambda\in m(\Phi)^iP\backslash m(\Phi)^{i+1}P^*,\]
for some $i\geq 0$. In this case, $D(\lambda)=S(d(\lambda)\rho)$ and
we need to show that $C(\lambda)=S(\lambda)/S(d(\lambda)\rho)$ is
irreducible. We will achieve this by showing that if $C(\lambda)$ is
reducible, then $\lambda\in  m(\Phi)^{i+1}P^*$, contradicting our
choice of $\lambda$ (see Section \ref{sec:keyprops}).
\end{remark}

\begin{remark} The scaling operation $[d]:\Z[P]\to \Z[P]$
given by mulitplication by $d$ on $P$ allows a
reformulation of  the theorem using
a slight modification of a ring used by
Bourbaki \cite{B} in their formulation
of the product expansion of the Weyl denominator. Let
\[ P_{\Q}= P\otimes_{\Z}\Q,\]
and denote by $\C[P_{\Q}]$ denote the group algebra of $P_{\Q}$ with
coefficients in $\C$ (we use exponential notation).
Observe that in this ring there exist elements which are infinitely
factorizable, for example elements of the form $e^p-1$ for $p\in P$.
 Further an element $C \in \C[P]$ is irreducible as an
element in $\C[P_{\Q}]$ if and only if
$[d]C$ is irreducible in $\C[P]$ for all natural
numbers $d$, where $[d]:\C[P]\to \C[P]$ denotes the algebra
homomorphism induced by multiplication by $d$.
  Since $P$ and the dual weight lattice $P^*$ are
commensurable in $P_{\Q}$, the two group rings $\C[P]$ and $\C[P^*]$
are both contained in $\C[P_{\Q}]$. In this ring of fractional Laurent
polynomials,
Theorem \ref{thm:main} can be reformulated as saying that with the
hypothesis of Theorem \ref{thm:main},
 the element
$C(\lambda)$ is irreducible in the ring $\C[P_{\Q}]^W$, where we can
assume that $d(\lambda)=1$.

However we do not work with this ring any further, as there is no
clear advantage in working with this bigger ring.
\end{remark}

\subsection{An uniquness property}
The following theorem expresses an uniqueness property of
`generalized characters'; in particular, that
the highest weight $\lambda$ can be recovered from knowing
$C(\lambda)$ provided $C(\lambda)$ is non-trivial:
\begin{theorem}\label{thm:unique}
Let $\cR=(E,\Phi, \Delta)$ be  a simple based root system
and $\lambda_i,  ~\mu_i, ~i=1,2$
 be dominant regular weights for
$\cR$. Assume that  the weights $\mu_1, ~\mu_2$ are of generalized
Weyl denominator type, i.e., they are an  integral multiple of either
$\rho$ or $\tilde{\rho}$. Assume further that $\mu_i$ divides $\lambda_i$ for
$i=1, 2$.   Suppose
 there is an equality of the quotients,
\[S(\lambda_1)/S(\mu_1)=S(\lambda_2)/S(\mu_2),  \]
and that these quotients are not equal to $1$.

Then $\lambda_1=\lambda_2$ and $\mu_1=\mu_2$.
In particular, if $\lambda_1$ is neither a multiple of $\rho$
nor $\tilde{\rho}$, and
$C(\lambda_1)=C(\lambda_2)$, then $\lambda_1=\lambda_2$.

\end{theorem}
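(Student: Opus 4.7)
The plan is to extract $\lambda_1=\lambda_2$ and $\mu_1=\mu_2$ from two successive monomial comparisons, in the dominance order on $P$, of the shared quotient $Q := S(\lambda_1)/S(\mu_1) = S(\lambda_2)/S(\mu_2)$. Since the $\lambda_i$ and $\mu_i$ are dominant regular, $S(\lambda_i)$ (resp.\ $S(\mu_i)$) has unique leading monomial $e^{\lambda_i}$ (resp.\ $e^{\mu_i}$), each with coefficient $1$. Hence $Q$ has leading monomial $e^{\lambda_i-\mu_i}$ with coefficient $1$, and equating the two expressions for $Q$ gives $\lambda_1-\mu_1=\lambda_2-\mu_2=:\nu$, with $\nu\neq 0$ by the hypothesis $Q\neq 1$.

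Next, I would rewrite $Q$ as a ``scaled character.''  Treat first the primary-primary case $\mu_i=d_i\rho$ (the dual-dual case $\mu_i=d_i\tilde{\rho}$ is analogous, via characters of the dual Lie algebra under the identification $\tilde{\rho}=m(\Phi)\rho^*$). The divisibility $\mu_i\mid\lambda_i$ forces $d_i\mid m_\alpha(\lambda_i)$ for every $\alpha\in\Delta$, hence $d_i\mid m_\alpha(\nu)$ and $\pi_i:=\nu/d_i\in P^+$ is a nonzero dominant integer weight. Combining the scaling identity $[d_i]S(\sigma)=S(d_i\sigma)$ with the Weyl character formula,
\[
Q \;=\; \frac{S(\lambda_i)}{S(d_i\rho)} \;=\; [d_i]\!\left(\frac{S(\pi_i+\rho)}{S(\rho)}\right) \;=\; [d_i]\chi_{\pi_i},
\]
where $[d]$ is the algebra endomorphism of $\C[P]$ sending $e^{\sigma}\mapsto e^{d\sigma}$. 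The theorem's hypothesis thus becomes the identity $[d_1]\chi_{\pi_1}=[d_2]\chi_{\pi_2}$ in $\C[P]$.

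Finally I would compare the weights on each side that lie immediately below $\nu$ in the dominance order. In $V_{\pi_i}$ the immediate predecessors of $\pi_i$ are $\pi_i-\alpha$ for $\alpha\in\Delta$ with $\langle\alpha^*,\pi_i\rangle\geq 1$, each with multiplicity one, and no weight of $V_{\pi_i}$ lies strictly between $\pi_i$ and $\pi_i-\alpha$ (since $\alpha$ is simple). Applying $[d_i]$, the weights of $[d_i]\chi_{\pi_i}$ lying immediately below $\nu$ are exactly the $\nu-d_i\alpha$ for such $\alpha$, each of multiplicity one (the intermediate lattice points $\nu-k\alpha$ with $1\leq k< d_i$ do not lie in the support of $[d_i]\chi_{\pi_i}$). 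Because $\pi_i\neq 0$ is dominant integral, at least one such $\alpha$ exists. Matching these ``minimal drops'' on both sides yields the equality of sets
\[
\{d_1\alpha:\alpha\in\Delta,\ \langle\alpha^*,\pi_1\rangle\geq 1\} \;=\; \{d_2\beta:\beta\in\Delta,\ \langle\beta^*,\pi_2\rangle\geq 1\},
\]
and linear independence of $\Delta$ in $E$ then forces $\alpha=\beta$ and $d_1=d_2$ for each matching pair. Consequently $\mu_1=\mu_2$, $\pi_1=\pi_2$, and $\lambda_1=\lambda_2$.

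The mixed case $\mu_1=d_1\rho$, $\mu_2=d_2\tilde{\rho}$, possible only in the non-simply laced setting, is ruled out by the same drop-comparison, which would require the primary drop $d_1\alpha$ to equal $d_2m(\Phi)\alpha$ for long $\alpha$ and $d_2m(\Phi)^2\alpha$ for short $\alpha$, impossible unless $m(\Phi)=1$. The ``In particular'' assertion is then the special case $\mu_i=D(\lambda_i)$, whose form is identified by Lemma~\ref{lem:Dlambda}. The main obstacle in this plan will be executing the dual-case scaled-character rewriting precisely -- tracking the lattices $P$ and $P^*$ under the identification $\alpha^*=2\alpha/(\alpha,\alpha)$ and using the appropriate Weyl character formula for the dual root system -- together with verifying the mixed-case drop argument carefully in the non-simply laced setting.
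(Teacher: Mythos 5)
Your first two steps are fine and genuinely different from the paper's route (the paper clears denominators to $S(\lambda_1)S(\mu_2)=S(\lambda_2)S(\mu_1)$ and inducts on the rank via cofactor expansions along corner roots): comparing the unique maximal monomial of $Q$ gives $\lambda_1-\mu_1=\lambda_2-\mu_2=\nu\neq 0$, the rewriting $Q=[d_i]\chi_{\pi_i}$ is legitimate because divisibility of weights is defined coefficientwise, and in the primary--primary and dual--dual cases the comparison of the maximal elements of the support of $Q$ below $\nu$ does force $d_1=d_2$ and hence $\mu_1=\mu_2$, $\lambda_1=\lambda_2$.

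The gap is in the mixed case, and it is twofold. First, the drop computation there is off: with the paper's normalization ($\tilde{\rho}=m(\Phi)\rho^*$, $\alpha^*=\alpha$ for $\alpha$ short and $\alpha^*=\alpha/m(\Phi)$ for $\alpha$ long), if $\mu_2=d_2\tilde{\rho}$ then $\lambda_2=d_2m(\Phi)\sigma^*$ with $\sigma^*\in P^{*,++}$, $Q=[d_2m(\Phi)]\chi^{\vee}_{\pi_2^*}$, and the weights of $Q$ immediately below $\nu$ are $\nu-d_2m(\Phi)\alpha$ for \emph{short} simple $\alpha$ and $\nu-d_2\alpha$ for \emph{long} simple $\alpha$ --- not $d_2m(\Phi)^2\alpha$ and $d_2m(\Phi)\alpha$ as you assert. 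Second, and more seriously, even with the corrected values the comparison with the primary drops $\nu-d_1\alpha$ only yields a contradiction when the support of $\nu$ (the simple roots $\alpha$ with $m_{\alpha}(\nu)>0$) contains both a short and a long root. If $\nu$ is supported entirely on short fundamental weights the two sides are consistent with $d_1=m(\Phi)d_2$, and if entirely on long ones with $d_1=d_2$; in these sub-cases you are left having to exclude an identity $\chi_{\pi_1}=\chi^{\vee}_{\pi_2^*}$ between a character of $\Phi$ and a character of the dual root system with the same highest weight in $E$ (for instance, for $B_2$ with $\pi_1=\omega_{\alpha}$ one must rule out the spin character agreeing with the corresponding $C_2$-character --- false here by counting weights, $4\neq 5$, but your drop comparison does not see this). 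So the mixed case, which is exactly where the conclusion $\mu_1=\mu_2$ has real content ($d_1\rho=d_2\tilde{\rho}$ is impossible), needs an additional argument --- e.g.\ a deeper comparison of weight multiplicities or of the two Weyl-dimension formulas --- that the proposal does not supply; the paper's inductive cofactor argument avoids the issue by never separating cases according to the type of $\mu_i$.
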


The proof of the theorem is given in Section \ref{sec:unique}.
Together with Theorem \ref{thm:main} and the explicit factorizations
of the generalized Schur-Weyl denominators, this gives a proof of
 the unique factorization of tensor products given by
Theorem \ref{thm:tensor}, but subject to Hypothesis {\em NMFG}. We
refer to Section \ref{sec:unique} for more details.

\begin{remark} The uniqueness property is required for the proof of
Proposition \ref{prop:nonmonic} that any symmetric factorization is
non-monic, which in turn goes into the proof
  of Theorem \ref{thm:main}.
\end{remark}

\section{Irreducibility property for  characters  of irreducible
representations of  simple  algebraic groups} \label{sec:groups}
In this section we
extend the irreducibility results of the previous section  to
characters of finite dimensional representations of  simple algebraic
groups.  For an algebraic group $H$, denote by ${\mathcal O}(H)$ the
algebra of regular functions on $H$. If a  group $L$ acts on $H$, we
denote by ${\mathcal O}(H)^L$ the ring of regular functions on $H$
which are  invariant with respect to the induced action of  $L$ on
${\mathcal O}(H)$.

Let $G$ be a connected,  simply
connected, almost simple algebraic group over $\C$.  Since $G$ is
simply connected, by a theorem of Fossum, Iversen and Popov  (see
\cite{FI}, \cite{KKLV1}, \cite{KKLV2}),  the Picard group of $G$ is
trivial. Hence  the ring ${\mathcal O}(G)$ is factorial. We have the
following:

\begin{proposition} \label{invirr} Let $G$ be a connected,  simply
connected, almost simple algebraic group over $\C$.  Suppose $f\in
{\mathcal O}(G)^G$ is an invariant regular function on $G$ with
respect to the  adjoint action of $G$ on itself. Then any irreducible
factor of $f$ in ${\mathcal O}(G)$ is  invariant with respect to the
adjoint action of $G$ on itself.

In particular, if $f$ is an irreducible element in ${\mathcal
O}(G)^G$, then it is irreducible in ${\mathcal O}(G)$.
\end{proposition}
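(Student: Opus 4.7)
The plan is to combine three structural facts about $G$: that $\O(G)$ is a unique factorization domain (the given input), that $G$ is connected, and that for $G$ simply connected almost simple we have $\O(G)^{\ast}=\C^{\ast}$ and $X^{\ast}(G)=0$. Given these, I would factor $f=u\prod p_i^{e_i}$ in $\O(G)$ into pairwise non-associate irreducibles. For any $g\in G$, the element $g\cdot p_i$ is irreducible and divides $g\cdot f=f$, so the adjoint action of $g$ permutes the finite set of associate classes $\{[p_1],\ldots,[p_k]\}$, and the entire problem reduces to showing this permutation is trivial for every $g$.

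For that key step I would pass to the geometric side. Let $Z_i=V(p_i)\subset G$; since $p_i$ is irreducible, $Z_i$ is an irreducible hypersurface, and $g\mapsto g\cdot Z_i$ lands in the finite set of irreducible components of $V(f)$. For each such component $C$ the subset $\{g\in G: g\cdot Z_i\subseteq C\}=\bigcap_{x\in Z_i}\{g: gxg^{-1}\in C\}$ is closed, being an intersection of preimages of a closed set under regular maps; the reverse inclusion is closed for the same reason, so each $\{g: g\cdot Z_i=C\}$ is closed. These finitely many closed sets form a disjoint cover of $G$, and since $G$ is a connected smooth variety it is irreducible, so only one of them is non-empty. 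It must contain $e$, giving $g\cdot Z_i=Z_i$ for all $g\in G$. By unique factorization, $g\cdot p_i$ and $p_i$ then differ by a unit, and I write $g\cdot p_i=\chi_i(g)\,p_i$ with $\chi_i(g)\in\O(G)^{\ast}$.

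Finally, $\chi_i$ is regular and multiplicative in $g$ (the comodule structure on $p_i$ takes the form $p_i\mapsto\chi_i\otimes p_i$), hence a group homomorphism $G\to\O(G)^{\ast}$. Using $\O(G)^{\ast}=\C^{\ast}$ and $X^{\ast}(G)=0$ for $G$ simply connected semisimple, $\chi_i\equiv 1$ and each $p_i$ is $G$-invariant, proving the first assertion. The ``in particular'' clause is then immediate: any factorization in $\O(G)$ of an element of $\O(G)^G$ automatically consists of $G$-invariant factors by what was just shown, so a putative nontrivial factorization would already live inside $\O(G)^G$, contradicting irreducibility there. I expect the main technical obstacle to be the closedness-and-irreducibility argument in the middle paragraph, which is where the connectedness of $G$ enters essentially; the representation-theoretic input (a simply connected semisimple group has no nontrivial rational characters and no nonconstant invertible regular functions) is standard.
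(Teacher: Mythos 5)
Your argument is correct and follows essentially the same route as the paper: factor $f$ in the UFD $\O(G)$, use connectedness of $G$ to show conjugation fixes each irreducible factor up to a unit, invoke Rosenlicht's theorem that units in $\O(G)$ are constants, and conclude the resulting multiplier is a character of $G$, hence trivial since a simply connected (semisimple) group has no nontrivial abelian quotients. The only difference is cosmetic: you spell out the connectedness step geometrically via the closed partition $\{g: g\cdot Z_i = C\}$, which the paper merely asserts.
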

\begin{proof} Suppose there is a factorization
\begin{equation}\label{factor} f=p_1\cdots p_r,
\end{equation} in the ring ${\mathcal O}(G)$, where $p_i$ are
irreducible elements in ${\mathcal O}(G)$. The group $G(\C)$ acts by
conjugation on ${\mathcal O}(G)$ leaving invariant the element $f$;
hence it acts by permuting the irreducible factors (upto units) $p_1,
\cdots, p_r$. Since $G(\C)$ is connected, this implies that the
permutation action is trivial,
\[ p_i^g=\xi_i(g)p_i, \quad i=1, \cdots, r, \] where $\xi_i(g)$ is a
nowhere vanishing function on $G(\C)$, and satisfies the $1$-cocycle
condition,
\[ \xi_i(gh)= \xi_i(g)^h \xi_i(h).\] From the regularity of the action
of $G$ on ${\mathcal O}(G)$, we conclude that $\xi_i(g)$ is a regular
function on $G$, hence an unit in ${\mathcal O}(G)$.  By a theorem of
Rosenlicht (\cite[page 78]{KKLV1}) the units in ${\mathcal O}[G]$ are
just the constants. Thus the $G$ action is trivial on the units of
${\mathcal O}[G]$.  This defines a homomorphism  $g\mapsto\xi_g$ from
the group $G({\mathcal O})$ to $\C^*$. Since $G(\C)$ has no abelian
quotients, this implies the cocyle is trivial, and hence $p_i^g=p_i$
for any $g\in G(\C)$.  Hence the factorization given by Equation
\ref{factor} actually holds in ${\mathcal O}(G)^G$.

\end{proof} Let $T$ be a maximal torus in $G$. By Chevalley's
restriction theorem, we have an isomorphism
\[ {\mathcal O}(G)^G\simeq {\mathcal O}(T)^W,\]  between the algebra
${\mathcal O}[G]^G$ and the algebra ${\mathcal O}(T)^W$ of Weyl group
invariant functions of $H$. Let $X^*(T)$ denote the group of
characters of $T$. The ring of regular functions ${\mathcal O}(T)$ on
the torus $T$ can be identified with  the group algebra
$\C[X^*(T)]$. Since $G$ is simply connected, by a theorem of Chevalley
it is known that these rings are isomorphic to the polynomial ring in
$r$-variables, where $r$ is the  dimension of $T$.  The Lie algebra
$\fg$ of $G$ is simple.  Choosing a Borel subgroup $B\supset T$ of $G$
allows us to define simple roots, weights, etc. for $\fg$ too.  The
lattice of weights $P$ can be identified with the character group
$X^*(T)$ of $T$.  Hence we have an identification,
\begin{equation}\label{eqn:group-liealg} {\mathcal O}(T)^W\simeq
\C[P]^W.
\end{equation}

To each dominant, integral weight $\lambda$,  denote the corresponding
irreducible $G$-module with highest weight $\lambda$ by
$V_{\lambda}$. Via the above isomorphism given by Equation
\ref{eqn:group-liealg},  the characters of the representation of $G$
and the Lie algebra on $V_{\lambda}$ can be identified.  In
particular, the irreducibility results of the previous section can be
transferred to the  context of invariant functions on the group. But
we obtain a bit more by  combining Theorem \ref{thm:main} and
Proposition \ref{invirr} (we have also incorporated the scaling operation):

\begin{theorem}\label{thm:irrgroups} Let $G$ be a  connected, simply
  connected, almost
 simple algebraic group over $\C$ of rank at least two. With respect
 to notation as above,
let  $\lambda$  be the highest weight of an
irreducible representation of $G$.
Suppose that $\lambda+\rho$ is not  a multiple of either $\rho$ or
$\tilde{\rho}$, and that $d(\lambda+\rho)=1$. Assume further
that Assumption NMFG holds for the weight $\lambda +\rho$.

Then   for any natural
number $d$, the function $g\mapsto \chi_{\lambda}(g^d)$ is  irreducible in the
ring of regular functions of $G$.
\end{theorem}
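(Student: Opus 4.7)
My plan is to reduce the statement on $\mathcal{O}(G)$ to the Lie-algebra-side irreducibility established in Theorem~\ref{thm:main}, via Chevalley's restriction theorem and Proposition~\ref{invirr}. First, I would observe that $\varphi_d(g) := \chi_\lambda(g^d)$ is a regular conjugation-invariant function on $G$, since $\chi_\lambda$ is a class function and $(hgh^{-1})^d = hg^dh^{-1}$. By Proposition~\ref{invirr}, the claim thus reduces to proving that $\varphi_d$ is irreducible in $\mathcal{O}(G)^G$.

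Via Chevalley restriction and the identification (\ref{eqn:group-liealg}), $\mathcal{O}(G)^G \simeq \C[P]^W$, and $\varphi_d$ corresponds to the image of $\chi_\lambda$ under the scaling endomorphism $[d]: e^\pi \mapsto e^{d\pi}$. Applying $[d]$ to the Weyl character formula (\ref{wcf}) gives
\[ [d]\chi_\lambda \;=\; \frac{S(d(\lambda+\rho))}{S(d\rho)} \;=\; \frac{S(\mu)}{S(d\rho)}, \qquad \mu := d(\lambda+\rho). \]
So it suffices to prove that $S(\mu)/S(d\rho)$ is irreducible in $\C[P]^W$.

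Since $d(\lambda+\rho) = 1$, we have $d(\mu) = d$, and $\mu$ is a dominant regular weight which is neither a multiple of $\rho$ nor of $\tilde{\rho}$ (as $\lambda+\rho$ is not). Assumption \emph{NMFG} transfers from $\lambda+\rho$ to $\mu$ by scaling: the form $u\omega_\alpha + v\omega_\beta + d(\lambda+\rho)\rho$ with $(u,v) = d(\lambda+\rho)$ becomes $(du)\omega_\alpha + (dv)\omega_\beta + d\rho$ with $d(\mu) = d = (du,dv)$, and the defining inequalities remain valid after multiplication by $d$. Theorem~\ref{thm:main} applied to $\mu$ then yields the irreducibility of $C(\mu) = S(\mu)/D(\mu)$ in $\C[P]^W$.

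The main obstacle is the final identification $D(\mu) = S(d\rho)$, so that $[d]\chi_\lambda = C(\mu)$. Invoking Lemma~\ref{lem:Dlambda} together with the duality reduction in Remark~\ref{rem:duality-red-pf}, one verifies that under the given hypotheses $\mu$ lies in a stratum $m(\Phi)^iP \setminus m(\Phi)^{i+1}P^*$, so that $D(\mu) = S(d(\mu)\rho) = S(d\rho)$. Concretely, the condition $d(\lambda+\rho) = 1$ together with the hypothesis on $\tilde{\rho}$ must be used to check that any $\tilde{\rho}$-type divisor $e\tilde{\rho} \mid \mu$ already satisfies $e\tilde{\rho} \mid d\rho$, so that $S(e\tilde{\rho}) \mid S(d\rho)$ by Proposition~\ref{prop:dualrhodiv}(b) and contributes no new factor to the least common multiple defining $D(\mu)$. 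Granting this, $[d]\chi_\lambda = C(\mu)$ is irreducible in $\C[P]^W$; hence $\varphi_d$ is irreducible in $\mathcal{O}(G)^G$, and by Proposition~\ref{invirr} also in $\mathcal{O}(G)$.
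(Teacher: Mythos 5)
Your overall route --- conjugation-invariance of $g\mapsto\chi_{\lambda}(g^d)$, Proposition \ref{invirr}, Chevalley restriction, the scaling identity $[d]\chi_{\lambda}=S(d(\lambda+\rho))/S(d\rho)$, and then Theorem \ref{thm:main} --- is exactly the combination the paper itself invokes (it gives no further detail), and those parts are fine. The genuine gap is the one step you defer: the identification $D(\mu)=S(d\rho)$ for $\mu=d(\lambda+\rho)$. This does not follow from the stated hypotheses when $\Phi$ is not simply laced: ``$\lambda+\rho$ is not a multiple of $\tilde{\rho}$'' does not prevent $\tilde{\rho}$ from \emph{dividing} $\lambda+\rho$, i.e.\ $\lambda+\rho\in m(\Phi)P^*$, and in that case Lemma \ref{lem:Dlambda} puts $\mu$ in the second stratum, so $D(\mu)=S(d^*(\mu)\tilde{\rho})\neq S(d\rho)$ and $S(\mu)/S(d\rho)=C(\mu)\cdot\bigl(S(d^*(\mu)\tilde{\rho})/S(d\rho)\bigr)$ is genuinely reducible. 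Concretely take $G=Sp(4)$ (type $C_2$) and $\lambda=3\omega_1$: then $\lambda+\rho=4\omega_1+\omega_2$ satisfies every stated hypothesis ($d(\lambda+\rho)=1$, not a multiple of $\rho=\omega_1+\omega_2$ nor of $\tilde{\rho}=2\omega_1+\omega_2$, NMFG vacuous), yet $\tilde{\rho}\mid\lambda+\rho$, so by Proposition \ref{prop:dualrhodiv}(b) the element $S(\tilde{\rho})$ divides $S(\lambda+\rho)$ and already for $d=1$ one gets $\chi_{3\omega_1}=\chi_{\omega_1}\cdot\bigl(x_1^2+x_1^{-2}+x_2^2+x_2^{-2}+1\bigr)$, since $S(\tilde{\rho})/S(\rho)=x_1+x_1^{-1}+x_2+x_2^{-1}=\chi_{\omega_1}$ (equivalently, $h_3=e_1(h_2-e_2+1)$ for a set of eigenvalues closed under inversion). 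So the assertion ``one verifies that $\mu$ lies in a stratum $m(\Phi)^iP\setminus m(\Phi)^{i+1}P^*$'' cannot be verified: it is false under the hypotheses as printed, and the conclusion of the theorem itself fails there.

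To be fair, the defect lies in the theorem's hypotheses as stated in the paper (which simply asserts the result as ``Theorem \ref{thm:main} plus Proposition \ref{invirr} plus scaling'' and glosses over precisely this point) rather than in your strategy. If one adds the hypothesis that $\tilde{\rho}$ does not divide $\lambda+\rho$ (equivalently $\lambda+\rho\notin m(\Phi)P^*$; automatic in the simply-laced case, where $\tilde{\rho}=\rho$ and $d(\lambda+\rho)=1$ already excludes it), your deferred verification becomes straightforward: writing $d=m(\Phi)^jd'$ with $(d',m(\Phi))=1$ and using that $m(\Phi)\in\{2,3\}$ is prime, some short-root coefficient of $\lambda+\rho$ is not divisible by $m(\Phi)$, hence $\mu=d(\lambda+\rho)\in m(\Phi)^jP\setminus m(\Phi)^{j+1}P^*$, and Lemma \ref{lem:Dlambda} gives $D(\mu)=S(d(\mu)\rho)=S(d\rho)$ because $d(\mu)=d$. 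With that emendation the remainder of your argument (including the harmless remark that the NMFG class of weights is preserved under scaling) completes the proof along the paper's intended lines.
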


\begin{remark} For $G\simeq SL(r), ~r\geq 3$, this is the theorem
  mentioned in the abstract.
\end{remark}

\section{Cofactor expansions}  \label{sec:cofactor}
The proof of the unique decomposition
of tensor products of  irreducible representations of simple Lie
algebras (see Theorem \ref{thm:tensor}) given in \cite{R}
is by induction on the rank of the Lie algebra, by considering
cofactor expansions of the Schur-Weyl elements occuring in the Weyl
character formula. The same general principle is applied to  the proof
of the irreducibility property of characters, with the expectation
that an inductive machinery can be setup.

\subsection{Cofactor expansion for $GL(r)$}
We first recall the cofactor expansion of the numerator of the Weyl
character formula for $GL(r)$.
Let $\lambda =(a_1>a_2> \cdots >a_{r-1}>  a_r=0)$ be a regular weight of
$GL(r)$.  The
Schur-Weyl sum $S(\lambda)$ can be expressed as a determinant,
\begin{equation*}
 \begin{split}
S(\lambda)& =\sum_{\sigma\in W}\epsilon(\sigma)
x_{\sigma(1)}^{a_1}\cdots x_{\sigma(r)}^{a_r}\\
& ={\rm det}(x_i^{a_j}).
\end{split}
\end{equation*}
This admits a cofactor expansion,
\begin{equation}\label{eqn:cofactorexpglr}
  \begin{split}
S(\lambda)(x_1,\cdots, x_r) & = x_1^{a_1}
S(\lambda^{(1)})(x_2,\cdots, x_r)-
x_1^{a_2}S(\lambda^{(2)})(x_2,\cdots, x_r)+ \\
& \cdots+
(-1)^{r-2}x_1^{a_{r-2}}
S(\lambda^{(r-2)})(x_2,\cdots, x_r)\\
& +(-1)^{r-1}
(x_2\cdots x_r)^{a_{r-1}}S(\lambda^{(r-1)})(x_2,\cdots, x_r),
\end{split}
\end{equation}
where for $1\leq i \leq r-2$
\[
 \lambda^{(i)}  =(a_1, a_2, \cdots, a_{i-1}, a_{i+1},  \cdots, a_{r}),\]
and
\[
 \lambda^{(r-1)} =(a_1-a_{r-1}, \cdots, a_{r-2}-a_{r-1}, 0),\]
are regular  weights for $GL(r-1)$. For our purpose, we are interested
only in the top (resp. bottom) two leading terms, and not the
full cofactor expansion as such.

In terms of the fundamental weights defined as in Example
\ref{ex:glrlambda}, if $\lambda=\sum_{i=1}^{r-1}m_i(\lambda)\omega_i$
with $m_i(\lambda)=a_i-a_{i+1}$, these weights can be expressed as,
\begin{align}\label{eqn:glrcofactor}
\lambda^{(1)} & =m_2(\lambda)\omega_1^{r-1}+ \cdots +m_{r-1}(\lambda)\omega_{r-2}^{r-1},\\
\lambda^{(2)} & =(m_1(\lambda)+m_2(\lambda))\omega_1^{r-1}+ \cdots
+m_{r-1}(\lambda)\omega_{r-2}^{r-1},\\
\lambda^{(r-1)} & =m_1(\lambda)\omega_1^{r-1}+ \cdots
+m_{r-2}(\lambda)\omega_{r-2}^{r-1}
\end{align}
where we have put a superscript $r-1$ to indicate that these are
fundamental weights for $GL(r-1)$.

\subsection{Cofactor expansion}
Our aim is to generalize the foregoing cofactor expansion for $GL(r)$
to that of a general simple based root system ${\mathcal R}=(E, \Phi,
\Delta)$ of rank $r$. The above interpretation of the cofactor
expansion in terms of the fundamental weights leads us to consider
corner roots in the Dynkin diagram of $\cR$ and to decompose
the weight lattice $P$ as a sum of the weight lattice corresponding to
the simple Lie subalgebra corresponding to the corner root of corank
one and the fundamental weight given by the corner root.

Choose a fundamental root
$\alpha\in \Delta$. We will be primarily
interested in the  special case  when
$\alpha$ corresponds to a corner vertex in the Dynkin diagram of
${\mathcal R}$.  Let $\Delta_{\alpha}=\Delta\backslash \{\alpha\}$,
and let $\Phi_{\alpha}\subset \Phi$ be the subset of roots lying in the span
of the roots generated by $\Delta_{\alpha}$. Let
\[ E_{\alpha}= \sum_{\alpha\in \Delta_{\alpha}}\R\alpha=\{\mu\in E\mid
<\omega_{\alpha}^*, \mu>=0\}. \]
It is known that $\cR_{\alpha}=(E_{\alpha},\Phi_{\alpha}, \Delta_{\alpha})$ is a
 based simple root system
 of rank $r-1$.

Let $W_{\alpha}$ denote the Weyl group of
$\cR_{\alpha}$. It can be
identified with the subgroup of $W$ generated by the fundamental
reflections $s_{\beta}$ for $\beta\in \Delta_{\alpha}$.
The following lemma provides a  $W_{\alpha}$-equivariant decomposition
of $P$, a  complement to $E_{\alpha}$
inside $E$ (see \cite[Lemma 3]{R}):
\begin{lemma} \label{lem:decomp}
Let $\omega_{\alpha}$ (resp. $\omega_{\alpha}^*$) denote
  the fundamental weight (resp. coweight) corresponding to the fundamental root
  $\alpha$. The isostropy group of $\omega_{\alpha}^*$ is precisely
  $W_{\alpha}$. There is $W_{\alpha}$-equivariant decomposition,
\[E=\R\omega_{\alpha}\oplus E_{\alpha}\quad \mbox{and}\quad
E^*=\R\omega_{\alpha}^*\oplus{E_{\alpha}}^*. \]
Via the above decomposition, the  rational weight lattice
$P\otimes \Q$ admits a $W_{\alpha}$-equivariant splitting,
\[ P\otimes \Q= \Q\omega_{\alpha} \oplus P_{\alpha}\otimes \Q,\]
where  the weight lattice $P_{\alpha}$ of the
root system  $\Phi_{\alpha}$ can be identified with the subspace of
$P$,
\begin{equation}\label{eqn:Palpha}
 P_{\alpha}= {\rm Ker}(\omega_{\alpha}^*)= \{ \pi\in P\mid
<\omega_{\alpha}^*,\pi>=0\}.
\end{equation}
Further, there is a $W_{\alpha}$-equivariant inclusion,
\begin{equation}\label{eqn:cof-dec-P}
 P\subset \Z\frac{\omega_{\alpha}}{<\omega_{\alpha}^*,
  \omega_{\alpha}>}\oplus P_{\alpha}.
\end{equation}
\end{lemma}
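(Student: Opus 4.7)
The plan is to establish the four assertions of the lemma in turn, using the explicit formulas for the $W$-action on $E$ and $E^*$ together with the standard description of stabilizers of dominant weights.

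First I would verify that every $\beta\in\Delta_\alpha$ gives a simple reflection fixing both $\omega_\alpha$ and $\omega_\alpha^*$: the explicit formulas give $s_\beta(\omega_\alpha) = \omega_\alpha - \langle\beta^*,\omega_\alpha\rangle\beta = \omega_\alpha - \delta_{\alpha\beta}\beta = \omega_\alpha$, and dually $s_{\beta^*}(\omega_\alpha^*) = \omega_\alpha^* - \langle\omega_\alpha^*,\beta\rangle\beta^* = \omega_\alpha^*$. Thus $W_\alpha$ fixes both elements and, by $W$-invariance of the pairing, preserves the hyperplanes $E_\alpha = \ker\omega_\alpha^*$ and ${E_\alpha}^* = \ker\omega_\alpha$. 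To identify the full $W$-stabilizer of $\omega_\alpha^*$ with $W_\alpha$ I would invoke the standard fact from \cite{B} that the stabilizer of a dominant (co)weight in $W$ is the parabolic subgroup generated by the simple reflections annihilating it; applied here this produces exactly $W_\alpha$.

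For the direct sum decompositions of $E$ and $E^*$: since $\langle\omega_\alpha^*,\omega_\alpha\rangle\neq 0$, the line $\R\omega_\alpha$ meets the codimension-one hyperplane $E_\alpha$ transversally, giving $E = \R\omega_\alpha\oplus E_\alpha$; the $W_\alpha$-equivariance follows from the invariance statements just obtained, and the $E^*$-decomposition is symmetric with the roles of roots and coroots swapped. Tensoring with $\Q$, every $\lambda\in P\otimes\Q$ decomposes uniquely as $\lambda = c\,\omega_\alpha + \mu$ with $c = \langle\omega_\alpha^*,\lambda\rangle/\langle\omega_\alpha^*,\omega_\alpha\rangle\in\Q$ and $\mu\in E_\alpha\cap(P\otimes\Q)=P_\alpha\otimes\Q$, giving the $W_\alpha$-equivariant rational splitting.

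The main obstacle is the integral refinement $P\subset\Z\,\omega_\alpha/\langle\omega_\alpha^*,\omega_\alpha\rangle\oplus P_\alpha$. Two things must be checked: that $\langle\omega_\alpha^*,\lambda\rangle\in\Z$ for every $\lambda\in P$, so that the $\R\omega_\alpha$-component is an integer multiple of $\omega_\alpha/\langle\omega_\alpha^*,\omega_\alpha\rangle$; and that the residual $\lambda - \langle\omega_\alpha^*,\lambda\rangle\,\omega_\alpha/\langle\omega_\alpha^*,\omega_\alpha\rangle$ lies in $P_\alpha = P\cap E_\alpha$. The second point is routine: the pairing of the residual with any $\beta^*$, $\beta\in\Delta_\alpha$, equals $\langle\beta^*,\lambda\rangle\in\Z$ because $\langle\beta^*,\omega_\alpha\rangle=0$ kills the correction term. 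The integrality of $\langle\omega_\alpha^*,\lambda\rangle$ is the more delicate point and rests on $\omega_\alpha^*\in P^*$ combined with the commensurability chain $P^*\supset P\supset m(\Phi)P^*$ of (\ref{eqn:latticeinc}); I expect this to require the most careful bookkeeping, especially in the non-simply-laced cases, where the value $\langle\omega_\alpha^*,\omega_\alpha\rangle$ must be read off the inverse Cartan matrix and matched against the denominators arising from $P/Q$.
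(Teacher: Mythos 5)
Your opening steps (the stabilizer computation, the transversality argument for $E=\R\omega_{\alpha}\oplus E_{\alpha}$ and its dual, and the rational splitting of $P\otimes\Q$) are fine, and your ``routine'' point --- that the residual $\lambda-\langle\omega_{\alpha}^*,\lambda\rangle\,\omega_{\alpha}/\langle\omega_{\alpha}^*,\omega_{\alpha}\rangle$ pairs integrally with every $\beta^*$, $\beta\in\Delta_{\alpha}$, because $\langle\beta^*,\omega_{\alpha}\rangle=0$ --- is exactly the one-line argument the paper itself gives for the last assertion (the earlier assertions are simply quoted from \cite[Lemma 3]{R}).

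The gap is precisely the point you single out as delicate and then defer: the claim that $\langle\omega_{\alpha}^*,\lambda\rangle\in\Z$ for every $\lambda\in P$. This cannot be salvaged by the bookkeeping you propose, because the claim is false in general: a fundamental coweight is $\Z$-valued on the root lattice $Q$, not on the weight lattice $P$, and $P/Q$ is nontrivial for most types. Already for $A_2$ with $\alpha$ a corner root one has $\langle\omega_{\alpha}^*,\omega_{\alpha}\rangle=2/3$, so $\lambda=\omega_{\alpha}$ violates your claim; the chain $P^*\supset P\supset m(\Phi)P^*$ of \eqref{eqn:latticeinc} concerns the embedding of $P^*$ in $E$ and says nothing about integrality of the pairing of $P^*$ against $P$ (the $GL(r)$ picture, where the degree in $x_1$ is the integer $a_1$, is what makes the claim look plausible, but it does not persist for the weight lattice of a simple $\fg$). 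What is true, what the paper's proof actually establishes, and what the later cofactor-expansion machinery uses, is only the projection statement: the $E_{\alpha}$-component of any $\pi\in P$ lies in the weight lattice of $\Phi_{\alpha}$. Along $\omega_{\alpha}$ one needs only that the degrees $\langle\omega_{\alpha}^*,\mu\rangle$ of the weights $\mu$ occurring in a fixed element such as $S(\lambda)$ differ by integers, which holds because those weights lie in a single coset of $Q$ and $\omega_{\alpha}^*$ is integral on $Q$ --- not because each individual degree is an integer. (Read literally, the displayed inclusion \eqref{eqn:cof-dec-P} would indeed force the integrality you want, so the statement should be understood in this weaker, coset-wise sense.) As written, your proposal stakes the integral refinement on a statement that cannot be proved, so the final step of your plan would fail.
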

The last assertion follows from the fact if $\mu\in P$ takes integral
values on the fundamental coroots, then  it's projection to
$E_{\alpha}$ also takes integral values on the fundamental coroots
$\beta^*, ~\beta\in \Delta_{\alpha}$, as $\omega_{\alpha}$ is
orthogonal to all such $\beta^*$.

Denote by  $l_{\alpha}$ the rational weight,
\begin{equation}\label{eqn:lalpha}
l_{\alpha}=\frac{1}{<\omega_{\alpha}^*,
  \omega_{\alpha}>}\omega_{\alpha}.
\end{equation}
With respect to the decomposition, a weight $\pi\in P_0$ can be
written as,
\begin{equation}\label{eqn:cof-dec-weight}
 \pi=\omega_{\alpha}^*(\pi)\omega_{\alpha} +\pi^{\alpha},
\end{equation}
where $\pi^{\alpha}\in P_{\alpha, 0}= P_{\alpha}\otimes \Q$ d
efined by the above equation,  is the
$W_{\alpha}$-equivariant projection of $\pi$ along $\omega_{\alpha}$
to $E_{\alpha}$. The integer $\omega_{\alpha}^*(\pi)$ (or
the rational number $\omega_{\alpha}^*(\pi)/\omega_{\alpha}^*(\omega_{\alpha})$
will be referred to as the degree of $\pi$ along $l_{\alpha}$
(or along $\omega_{\alpha}$ or $\alpha$).

 Let $\omega$ be a fundamental weight of $\Phi$ distinct from
$\omega_{\alpha}$. It follows from Equation \ref{eqn:cof-dec-weight}
that $\omega^{\alpha}$ is a fundamental weight for the root system
$\Phi_{\alpha}$.  In particular, the projection  $\rho^{\alpha}$ of
$\rho=\rho(\Phi)$ is the sum of the fundamental weights of
$\Phi_{\alpha}$. i.e., equal to the Weyl weight of the root system
$\Phi_{\alpha}$.

Suppose $U$ is an element of $\C[P]$. Write
\[ U=\sum_{\mu\in P}a_{\mu}(U)e^{\mu}. \]
Define $P(U)$ to be the set finite of weights occuring in $U$,
\[ P(U)=\{\mu\in P_0\mid a_{\mu}(U)\neq 0\}, \]
Expand $U$ in terms of the `degree along $\alpha$' as,
\[
 U=\sum_{i\geq 0}U'_{\alpha,u-i},
\]
where
\[U'_{\alpha, u-i}=\sum_{\mu\in P(U),~ \omega_{\alpha}^*(\mu)=u-i}a_{\mu}(U)e^{\mu}.\]
If $U$ is $W$-invariant, then the terms $U'_{\alpha,u-i}$ are
$W_{\alpha}$-invariant. The term $U'_{\alpha,u-i}$ can also be written
as,
\[U'_{\alpha,u-i}=e^{(u-i)l_{\alpha}}U_{\alpha, u-i}, \]
where we now consider $U_{\alpha,i}$ as an element of
$\C[P_{\alpha}]$. Define the {\em cofactor   expansion} of $U$ along
$\alpha$ as,
\begin{equation}\label{eqn:cofexp}
 U=\sum_{i\geq 0}e^{(u-i)l_{\alpha}}U_{\alpha, u-i}.
\end{equation}
The element $U_{\alpha, u}\in \C[P_{\alpha}]$ will also be referred
to as the {\em leading coefficient of $U$ along $\alpha$}.

For any
integer $i\geq 0$, we refer to the term $ U_{\alpha, u-i}$ as the {\em
 $i$-th codegree term} in the cofactor expansion of $U$ along $\alpha$
(see Section \ref{sec:unique} where we use this notation).

\begin{example}\label{ex:glrleadingcoeff}
If we consider the cofactor expansion of $S(\lambda)$
for $GL(r)$, the top degree coefficient is the leading
coefficient corresponding to the cofactor expansion along the
fundamental corner root $e_1-e_2$ (standard notation). Upto
a monomial term, the constant term is the leading coefficient in the
cofactor expansion of $S(\lambda)$ along the other corner fundamental
root $e_{r-1}-e_r$.
\end{example}

\subsection{Cofactor expansion of $S(\lambda)$}
We now describe the cofactor expansion of the Schur-Weyl  sum $S(\lambda)$.
Let $W(\alpha)$ be a set of right coset representatives for $W_{\alpha}$ in
$W$, i.e., a section for the projection map $W\to W_{\alpha}\backslash W$. For
any element $w\in W_{\alpha}$ and $s\in W$,
the value
\[
\omega_{\alpha}^*(ws\lambda)=(w^{-1}\omega_{\alpha}^*)(s\lambda)=
\omega_{\alpha}^*(s\lambda)\]
is a constant for any weight $\lambda$. Hence the Schur-Weyl sum
$S(\lambda)$ can be expanded as,
\begin{equation}\label{eqn:cofexp}
\begin{split}
 S(\lambda)& =
\sum_{d\in \Z}e^{dl_{\alpha}}\left(\sum_{w\in
W_d}\epsilon(w)e^{(w\lambda)^{\alpha}}\right),\\
&=\sum_{s\in W(\alpha)}e^{\omega_{\alpha}^*(s\lambda)l_{\alpha}}
S((s\lambda)^{\alpha})\\
\text{where}\quad  W_d&
=\{w\in W\mid \omega_{\alpha}^*(w\lambda)=d\},
\end{split}
\end{equation}
and for each $s\in W(\alpha)$, $S((s\lambda)^{\alpha})$ refers to the
Schur-Weyl sum of the weight $(s\lambda)^{\alpha}$ belonging to the root system
$\cR_{\alpha}$. In the above notation,
\[ S(\lambda)_{\alpha,d}=\sum_{s}S((s\lambda)^{\alpha}),\]
where the sum ranges over
$s\in W(\alpha)$ such that $ \omega_{\alpha}^*(s\lambda)=d$.

 We are interested in the first two leading terms in the above
expansion. Given a regular weight $\lambda\in P_{+}$, define
\[
\begin{split}
a_{\alpha, 1}(\lambda) & =
\text{max}\{w\lambda(\omega_{\alpha}^*)\mid w\in W\}, \\
 a_{\alpha, 2}(\lambda)&
= \text{max}\{w\lambda(\omega_{\alpha}^*)\mid w\in W~ \text{and} ~
w\lambda(\omega_{\alpha}^*)\neq a_{\alpha,1}(\lambda)\}.
\end{split}
\]
The following lemma is proved in \cite[Lemma 4]{R}:
\begin{lemma} \label{formalism}
Let $\lambda$ be a regular weight in $P_+$ and $\alpha\in \Delta$.
\begin{enumerate}
\item The largest value
$a_{\alpha,1}(\lambda)$ of $(w\lambda)(\omega_{\alpha}^*)$ for $w\in
  W$,  is attained
precisely for
$w\in W_{\alpha}$. In particular,
\[ a_{\alpha,1}(\lambda)=<\omega_{\alpha}^*,\lambda>.\]
\item The second highest value of $a_{\alpha,2}(\lambda)$ is attained
precisely for $w$ in $W_{\alpha}s_{\alpha}$, and the value is given by
\[a_{\alpha,2}(\lambda)=\omega_{\alpha}^*(s_{\alpha}\lambda)=
a_{\alpha,1}(\lambda)-<\alpha^*,
\lambda>=a_{\alpha,1}(\lambda)-m_{\alpha}(\lambda).\]
\end{enumerate}
\end{lemma}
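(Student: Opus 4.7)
The strategy is to pass from $w\lambda$ to the Weyl orbit of $\omega_\alpha^*$ and exploit the regularity of $\lambda$. By $W$-invariance of the duality pairing,
\[
\omega_\alpha^*(w\lambda) \;=\; \langle w^{-1}\omega_\alpha^*,\lambda\rangle,
\]
so the problem becomes that of maximizing $\langle\mu,\lambda\rangle$ as $\mu$ ranges over the finite orbit $W\omega_\alpha^*$. Since $\langle\omega_\alpha^*,\beta\rangle=\delta_{\alpha\beta}$ for $\beta\in\Delta$, the coweight $\omega_\alpha^*$ is dominant for the dual root system, and standard theory yields an expansion
\[
\omega_\alpha^*-w^{-1}\omega_\alpha^* \;=\; \sum_{\beta\in\Delta} n_\beta(w)\,\beta^*,
\qquad n_\beta(w)\in\mathbb{Z}_{\geq 0},
\]
for every $w\in W$. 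Pairing with $\lambda$ and using $m_\beta(\lambda)>0$ from regularity converts the whole problem into minimizing $\sum_\beta n_\beta(w)\,m_\beta(\lambda)$ over $w\in W$.

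\textbf{Part (1)} falls out immediately: the minimum value $0$ is attained exactly when every $n_\beta(w)=0$, i.e.\ when $w^{-1}$ stabilizes $\omega_\alpha^*$. By Lemma \ref{lem:decomp} this stabilizer is precisely $W_\alpha$, and the maximum of $\omega_\alpha^*(w\lambda)$ equals $\langle\omega_\alpha^*,\lambda\rangle$, as required.

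\textbf{Part (2)} will be reduced to the sharper assertion that $n_\alpha(w)\geq 1$ whenever $w\notin W_\alpha$. Granting this, one obtains the lower bound $\sum_\beta n_\beta(w)\,m_\beta(\lambda)\geq m_\alpha(\lambda)$, with equality forcing $n_\alpha(w)=1$ and $n_\beta(w)=0$ for $\beta\neq\alpha$, hence $w^{-1}\omega_\alpha^*=\omega_\alpha^*-\alpha^*=s_\alpha\omega_\alpha^*$, which is equivalent to $w\in W_\alpha s_\alpha$. A direct computation then identifies the value with $\omega_\alpha^*(s_\alpha\lambda)=a_{\alpha,1}(\lambda)-\langle\alpha^*,\lambda\rangle$.

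\textbf{Main obstacle.} The heart of the argument is the inequality $n_\alpha(w)\geq 1$ for $w\notin W_\alpha$. I would prove it using the orthogonal decomposition $E^*=\mathbb{R}\omega_\alpha^*\oplus E_\alpha^*$ supplied by Lemma \ref{lem:decomp}: the coroots $\beta^*$ with $\beta\in\Delta_\alpha$ all lie in $E_\alpha^*$, while $\alpha^*$ has a strictly positive $\omega_\alpha^*$-component. Thus $n_\alpha(w)$ controls the entire $\omega_\alpha^*$-component of $\omega_\alpha^*-w^{-1}\omega_\alpha^*$; if it vanishes, then $w^{-1}\omega_\alpha^*$ has the same $\omega_\alpha^*$-component as $\omega_\alpha^*$ itself, and since $w$ preserves the norm, orthogonality forces the $E_\alpha^*$-component also to vanish. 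Consequently $w^{-1}\omega_\alpha^*=\omega_\alpha^*$, so $w\in W_\alpha$ by Lemma \ref{lem:decomp}, contradicting the hypothesis and completing the plan.
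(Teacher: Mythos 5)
Your proof is correct. Note that the paper itself does not prove this lemma; it simply cites \cite[Lemma 4]{R}, so there is no internal proof to compare against, but your argument is complete and self-contained. The dualization $\omega_{\alpha}^*(w\lambda)=\langle w^{-1}\omega_{\alpha}^*,\lambda\rangle$ together with the standard fact that the dominant coweight $\omega_{\alpha}^*$ dominates its Weyl orbit, $\omega_{\alpha}^*-w^{-1}\omega_{\alpha}^*=\sum_{\beta}n_{\beta}(w)\beta^*$ with $n_{\beta}(w)\in\Z_{\geq 0}$, reduces everything to minimizing $\sum_{\beta}n_{\beta}(w)m_{\beta}(\lambda)$, and regularity ($m_{\beta}(\lambda)>0$) makes the equality analysis work exactly as you state; the identification of the equality locus in part (2) with the coset $W_{\alpha}s_{\alpha}$ via $w^{-1}\omega_{\alpha}^*=s_{\alpha}\omega_{\alpha}^*$ is also right. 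Your key step, that $n_{\alpha}(w)\geq 1$ for $w\notin W_{\alpha}$, is sound: the decomposition $E^*=\R\omega_{\alpha}^*\oplus E_{\alpha}^*$ is orthogonal for the $W$-invariant form because $\langle\omega_{\alpha}^*,\beta\rangle=0$ for $\beta\in\Delta_{\alpha}$ (so $(\omega_{\alpha}^*,\beta^*)=0$ under the identification $\beta^*=2\beta/(\beta,\beta)$), and then $w^{-1}\omega_{\alpha}^*=\omega_{\alpha}^*-v$ with $v\in E_{\alpha}^*$ forces $\|v\|=0$ by norm preservation, so $w\in W_{\alpha}$ by the isotropy statement of Lemma \ref{lem:decomp}; integrality of the $n_{\beta}(w)$ (which you do use to pass from $n_{\alpha}(w)\neq 0$ to $n_{\alpha}(w)\geq 1$) holds since $\omega_{\alpha}^*$ is an integral coweight, so its orbit differences lie in the coroot lattice.
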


 Assume now that $\alpha$ is a corner root. Then $\cR_{\alpha}$ is
 simple. As a corollary of the above discussion, we obtain the first two terms
for the cofactor
expansion of $S(\lambda)$
 along $\omega_{\alpha}$:
\begin{lemma}\label{lem:cofexp}
With notation as above, let
$\lambda=\sum_{\beta\in \Delta}m_{\beta}(\lambda)\omega_{\beta}$.
The cofactor  expansion of $S(\lambda)$
 given by Equation (\ref{eqn:cofexp}) is,
\begin{equation}
S(\lambda) =e^{a_{\alpha,1}(\lambda)l_{\alpha}}S(\lambda^{\alpha})
- e^{a_{\alpha,2}(\lambda)l_{\alpha}}S((s_{\alpha}\lambda)^{\alpha})
+L(\lambda),
\end{equation}
where $L(\lambda)$ denotes the terms of degree along $l_{\alpha}$ less than
the second highest degree. In terms of fundamental weights,
\begin{align}\label{eqn:salpha}
\lambda^{\alpha}&=\sum_{\beta \in \Delta_{\alpha}}
m_{\beta}(\lambda)\omega_{\beta}^{\alpha},\\
(s_{\alpha}\lambda)^{\alpha}& =\lambda^{\alpha}+|m_{\alpha_n}(\alpha)|m_{\alpha}(\lambda)\omega_{\alpha_n}^{\alpha}
\end{align}
where  $\alpha_n$ is the unique
root connected to $\alpha$ in the Dynkin diagram of
$\cR$.
\end{lemma}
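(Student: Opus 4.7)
The plan is to assemble the lemma from two ingredients already established in the excerpt: the coset decomposition for the Schur–Weyl sum in Equation~(\ref{eqn:cofexp}) and the degree calculus in Lemma~\ref{formalism}, followed by a direct computation of the projections $\lambda^\alpha$ and $(s_\alpha\lambda)^\alpha$ in terms of fundamental weights.

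First I would rewrite $S(\lambda)=\sum_{w\in W}\epsilon(w)e^{w\lambda}$ using the unique factorization $w=vs$ with $v\in W_\alpha$ and $s\in W(\alpha)$. Because $W_\alpha$ is the isotropy group of $\omega_\alpha^*$ (Lemma~\ref{lem:decomp}), it fixes $l_\alpha=\omega_\alpha/\langle\omega_\alpha^*,\omega_\alpha\rangle$, so for each fixed $s$ one has $vs\lambda=\omega_\alpha^*(s\lambda)\,l_\alpha+v\bigl((s\lambda)^\alpha\bigr)$ with $v\bigl((s\lambda)^\alpha\bigr)\in E_\alpha$. Multiplicativity of the sign character then gives
\[
\sum_{v\in W_\alpha}\epsilon(vs)\,e^{vs\lambda}
=\epsilon(s)\,e^{\omega_\alpha^*(s\lambda)\,l_\alpha}\sum_{v\in W_\alpha}\epsilon(v)e^{v(s\lambda)^\alpha}
=\epsilon(s)\,e^{\omega_\alpha^*(s\lambda)\,l_\alpha}\,S\bigl((s\lambda)^\alpha\bigr),
\]
where the last Schur–Weyl element is formed inside $\cR_\alpha$. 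This reproduces the cofactor expansion~(\ref{eqn:cofexp}).

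Next I would invoke Lemma~\ref{formalism}: the maximum value $a_{\alpha,1}(\lambda)=\langle\omega_\alpha^*,\lambda\rangle$ of $\omega_\alpha^*(w\lambda)$ is attained precisely on the coset $W_\alpha\cdot e$, and the second maximum $a_{\alpha,2}(\lambda)=a_{\alpha,1}(\lambda)-m_\alpha(\lambda)$ is attained precisely on $W_\alpha s_\alpha$. Taking $s=e$ in the expansion above gives the leading term $+e^{a_{\alpha,1}(\lambda)l_\alpha}S(\lambda^\alpha)$ (since $\epsilon(e)=1$), and taking $s=s_\alpha$ gives the subleading term $-e^{a_{\alpha,2}(\lambda)l_\alpha}S((s_\alpha\lambda)^\alpha)$ (since $\epsilon(s_\alpha)=-1$); all other cosets produce strictly lower degree along $l_\alpha$ and are absorbed into $L(\lambda)$.

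For the explicit formulas, decompose $\lambda=m_\alpha(\lambda)\omega_\alpha+\sum_{\beta\in\Delta_\alpha}m_\beta(\lambda)\omega_\beta$ and apply the projection along $l_\alpha$ from Lemma~\ref{lem:decomp}. Since $\omega_\alpha^\alpha=0$ while $\omega_\beta^\alpha$ (for $\beta\in\Delta_\alpha$) is, by the remark after (\ref{eqn:cof-dec-weight}), the fundamental weight of $\Phi_\alpha$ corresponding to $\beta$, one obtains $\lambda^\alpha=\sum_{\beta\in\Delta_\alpha}m_\beta(\lambda)\omega_\beta^\alpha$. For the second, use $s_\alpha\lambda=\lambda-m_\alpha(\lambda)\alpha$, so $(s_\alpha\lambda)^\alpha=\lambda^\alpha-m_\alpha(\lambda)\,\alpha^\alpha$. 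Because $\alpha$ is a corner root, its only Dynkin neighbour in $\Delta$ is $\alpha_n$, hence $\alpha=2\omega_\alpha+m_{\alpha_n}(\alpha)\omega_{\alpha_n}$ as a sum of fundamental weights; projecting yields $\alpha^\alpha=m_{\alpha_n}(\alpha)\omega_{\alpha_n}^\alpha$. Since $m_{\alpha_n}(\alpha)=\langle\alpha_n^*,\alpha\rangle$ is a negative Cartan integer, the sign flips and one gets $(s_\alpha\lambda)^\alpha=\lambda^\alpha+|m_{\alpha_n}(\alpha)|\,m_\alpha(\lambda)\,\omega_{\alpha_n}^\alpha$, as required.

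The substantive content is already concentrated in Lemma~\ref{lem:decomp} and Lemma~\ref{formalism}, so the main obstacle here is really bookkeeping: verifying that the $W_\alpha$-coset sums collapse cleanly into Schur–Weyl elements of the sub–root system $\cR_\alpha$, and correctly tracking the sign $\epsilon(s_\alpha)=-1$ that is responsible for the minus sign in front of the second term. Once the identification of the two top $l_\alpha$-degrees is pulled directly from Lemma~\ref{formalism}, the rest is a direct assembly together with the elementary expansion of $\alpha$ in fundamental weights at a corner node.
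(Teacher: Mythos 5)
Your proposal is correct and follows essentially the same route as the paper: the paper's proof likewise takes the coset expansion (\ref{eqn:cofexp}) and Lemma \ref{formalism} as already giving the two leading terms ("we need to prove only the last formula") and then computes $(s_{\alpha}\lambda)^{\alpha}$ exactly as you do, via $s_{\alpha}\lambda=\lambda-m_{\alpha}(\lambda)\alpha$ and $\alpha=2\omega_{\alpha}+m_{\alpha_n}(\alpha)\omega_{\alpha_n}$ with $m_{\alpha_n}(\alpha)<0$ at a corner root. Your extra paragraph re-deriving the $W_{\alpha}$-coset collapse, including the factor $\epsilon(s)$ that accounts for the minus sign, is a harmless (and in fact clarifying) elaboration of what the paper leaves implicit.
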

\begin{proof}
We need to prove only the last formula. By definition,
\[ s_{\alpha}\lambda=\lambda-<\alpha^*,
\lambda>\alpha=\lambda-m_{\alpha}(\lambda)\alpha.\]
Since $\cR$ is simple and $\alpha$ is a corner root, the term
$m_{\beta}(\alpha)=<\beta^*, \alpha>$ vanishes if $\beta$ is different
from $\alpha$ and $\alpha_n$. Hence, in terms of fundamental weights,
\begin{equation} \alpha=\sum_{\beta\in \Delta}m_{\beta}(\alpha)\omega_{\beta}=
2\omega_{\alpha}+m_{\alpha_n}(\alpha)\omega_{\alpha_n},
\end{equation}
where $m_{\alpha_n}(\alpha)= <\alpha_n^*,\alpha>$ is a negative
integer.  Putting all this together yields
\begin{equation}\label{eqn:salphalambda}
\begin{split}
(s_{\alpha}\lambda)^{\alpha}&=\lambda^{\alpha}+|m_{\alpha_n}(\alpha)|m_{\alpha}(\lambda)\omega_{\alpha_n}^{\alpha}\\
&=
(m_{\alpha_n}(\lambda)+|m_{\alpha_n}(\alpha)|m_{\alpha}(\lambda))\omega_{\alpha_n}^{\alpha}+\sum_{\beta\neq
  \alpha, ~\alpha_n}m_{\beta}(\lambda)\omega_{\beta}^{\alpha}.
\end{split}
\end{equation}

\end{proof}

\begin{example} For $GL(r), ~|m_{\alpha_n}(\alpha)|=1$, and this gives
  the formula expressed in Equation \ref{eqn:glrcofactor}:
\[ \lambda^{(2)}=m_1(\lambda)\omega_1^{(r-1)} +\lambda^{(1)}.\]
\end{example}

\subsection{Eisenstein criterion}
We now state the equivalent in our context of the observation used in
the proof of the classical Eisenstein criterion regarding irreducibility of
polynomials. We first assert the separbility of $S(\lambda)$:
\begin{lemma} Assume that Theorem \ref{thm:main} holds for the based
  simple root
  system $\cR=(E, \Phi,\Delta)$. Then for any dominant regular weight $\mu$,
  the Schur-Weyl sum $S(\mu)$ is a separable element in $\C[P]$.
\end{lemma}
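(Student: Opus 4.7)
The plan is to factor $S(\mu)=D(\mu)\,C(\mu)$ as in (\ref{def:clambda}) and split into two cases. If $\mu$ is a multiple of $\rho$ or $\tilde{\rho}$, then $C(\mu)=1$ and $S(\mu)$ is itself a generalized Weyl denominator, whose separability is exactly Propositions \ref{prop:weyldenominator}(c) and \ref{prop:dualrhodiv}(c). Otherwise Theorem \ref{thm:main} yields that $C(\mu)$ is absolutely irreducible in $\C[P]^W$, while $D(\mu)$ is separable with irreducible factors of the form $(e^\alpha-\zeta)$ for $\alpha\in\Phi^+$ and $\zeta$ a root of unity, by Propositions \ref{prop:weyldenominator}(a) and \ref{prop:dualrhodiv}(a). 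Separability of $S(\mu)$ in $\C[P]$ then reduces to proving (i) that $C(\mu)$ and $D(\mu)$ share no irreducible factor in $\C[P]$, and (ii) that $C(\mu)$ is square-free in $\C[P]$.

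For (i), the crucial observation is that $C(\mu)$ does not vanish at the identity of the torus. Writing $D(\mu)=S(d(\mu)\rho)=S(\rho)\,\chi_{(d(\mu)-1)\rho}$ (the case $D(\mu)=S(d^{*}(\mu)\tilde{\rho})$ being symmetric), the Weyl character formula expresses $C(\mu)=\chi_{\mu-\rho}/\chi_{(d(\mu)-1)\rho}$ as a ratio of rational functions, whose value at the identity is the positive quotient $\dim V_{\mu-\rho}/\dim V_{(d(\mu)-1)\rho}\ne 0$. This rules out $(e^\alpha-1)\mid C(\mu)$ for every $\alpha\in\Phi^+$. For a non-trivial root of unity $\zeta$, one evaluates $C(\mu)$ at a suitably chosen torsion element $t\in T$ with $e^\alpha(t)=\zeta$; for $t$ sufficiently regular, the character values $\chi_{\mu-\rho}(t)$ and $\chi_{(d(\mu)-1)\rho}(t)$ are non-zero, ruling out $(e^\alpha-\zeta)\mid C(\mu)$ in the same way.

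For (ii), suppose $p^2\mid C(\mu)$ for some irreducible $p\in\C[P]$, and let $H$ denote the stabilizer of the ideal $(p)$ in $W$. The $W$-invariance of $C(\mu)$ forces $(wp)^2\mid C(\mu)$ for every $w\in W$; as the ideals $(wp)$ for $w\in W/H$ are pairwise distinct irreducibles in the UFD $\C[P]$, we obtain $G^2\mid C(\mu)$ for the orbit product $G=\prod_{w\in W/H}w(p)$. This $G$ is $W$-semi-invariant under some character $\chi$ with $\chi^2=1$; since the units of $\C[P]$ that square to $1$ are just $\{\pm 1\}$, the character $\chi$ takes values in $\{\pm 1\}$ and is thus a genuine $\pm 1$-valued character of $W$. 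If $\chi$ is trivial, then $G\in\C[P]^W$ is a non-unit divisor of $C(\mu)$, and the irreducibility of $C(\mu)$ in $\C[P]^W$ forces $G$ associate to $C(\mu)$, whence $C(\mu)^2\mid C(\mu)$, a contradiction. If $\chi$ is non-trivial, then $G$ lies in the $\chi$-isotypic component of $\C[P]$ over $\C[P]^W$, which is cyclic, generated (up to units) by the appropriate partial Weyl denominator whose irreducible factors are among the $(e^\alpha-1)$; this forces the irreducibles in the orbit of $p$ to be associates of $(e^\alpha-1)$, contradicting the non-vanishing established in step (i).

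The principal obstacle is the non-vanishing claim in (i) for factors $(e^\alpha-\zeta)$ with $\zeta\ne 1$, which requires a delicate choice of torsion evaluation point together with control over character values at sufficiently regular torsion elements of the torus; once this is in place the rest of the argument is a uniform application of the irreducibility in $\C[P]^W$ supplied by Theorem \ref{thm:main} together with the explicit product structure of the generalized Weyl denominators.
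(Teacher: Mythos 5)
Your overall skeleton is reasonable, and in fact more explicit than the paper's own proof, which simply cites the hypothesis together with Part (c) of Propositions \ref{prop:weyldenominator} and \ref{prop:dualrhodiv} and points to Section \ref{sec:unique}: you correctly isolate the two sub-claims that are really needed, namely (i) coprimality of $C(\mu)$ with $D(\mu)$ in $\C[P]$ and (ii) square-freeness of $C(\mu)$ in $\C[P]$. But both of your arguments for these sub-claims have genuine gaps. For (i), the case $\zeta\neq 1$ cannot be handled by the evaluation you propose: if $\zeta^{d(\mu)}=1$ and $\zeta\neq 1$, then on the whole hypersurface $\{e^{\alpha}=\zeta\}$ the character $\chi_{(d(\mu)-1)\rho}$ vanishes identically, since its product expansion contains the factor $1+e^{\alpha}+\cdots+e^{(d(\mu)-1)\alpha}$, which is zero there; so there is no torsion point of that hypersurface at which both $\chi_{\mu-\rho}$ and $\chi_{(d(\mu)-1)\rho}$ are nonzero, and the ratio you want to evaluate is $0/0$ everywhere on it. (Independently, the assertion that characters of irreducibles are nonzero at ``sufficiently regular'' torsion elements is false in general.) This is precisely the case you flag as the principal obstacle, and it is not merely delicate --- the proposed route breaks down. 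The paper deals with coprimality questions of this kind through the uniqueness theorem and the coprimality corollaries of Section \ref{sec:unique} (Lemma \ref{lem:coprime}, Corollaries \ref{cor:coprimeroots}, \ref{cor:coprimeweyl}, \ref{cor:coprime}), not by evaluation at special points.

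Step (ii) contains a false inference. From $w(G)=\xi_w G$ you conclude that $\xi_w\in\{\pm1\}$ because the only units of $\C[P]$ squaring to $1$ are $\pm 1$; but the units of $\C[P]$ are $c\,e^{\lambda}$ with $c\in\C^*$, $\lambda\in P$, and the $\xi_w$ satisfy only a cocycle relation, not $\xi_w^2=1$. Already for $sl_2$ the orbit product $G=e^{\rho}-1$ satisfies $s(G)=-e^{-\rho}G$, so $G$ is only semi-invariant up to a monomial and cannot be normalized to be invariant or sign-semi-invariant. More seriously, the statement you are trying to prove in (ii) using only the irreducibility of $C(\mu)$ in $\C[P]^W$ is false as a general principle: the element $e^{\rho}-2+e^{-\rho}$ is irreducible in $\C[P]^W=\C[e^{\rho}+e^{-\rho}]$ but equals $e^{-\rho}(e^{\rho}-1)^2$ in $\C[P]$. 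Hence any correct proof of square-freeness of $C(\mu)$ in $\C[P]$ must use specific properties of $S(\mu)$ (or an irreducibility statement in the full ring $\C[P]$, which the paper only establishes for $GL(r)$ in Section \ref{sec:noninv}, Theorem \ref{thm:glrirr}); your orbit-product argument does not supply that input, so the gap cannot be closed along the lines you propose.
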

\begin{proof} This is a simple consequence of the hypothesis and the
  separability of generalized Weyl denominators given in Part (c) of
  Propositions \ref{prop:weyldenominator} and \ref{prop:dualrhodiv}
  (see also Section \ref{sec:unique}).
\end{proof}

We now formulate  the classical Eisenstein criterion in
our context:
\begin{lemma}[Eisenstein criterion]\label{lem:eisencrit} Assume that
Theorem \ref{thm:main} holds for any irreducible based root system of
rank less than that of $\cR=(E, \Phi,\Delta)$.  Let $\lambda$ be a dominant
regular weight  and
$U$ be a factor of $S(\lambda)$. Then  for any $\alpha\in \Delta$ and
for any $i< m_{\alpha}(\lambda)$,  the leading term $U_{\alpha,u} $
 in the cofactor expansion given by
Equation \ref{eqn:cofexp} divides $U_{\alpha,u-i}$ in the ring $\C[P_{\alpha}]$.
\end{lemma}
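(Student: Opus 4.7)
The approach is to imitate the proof of the classical Eisenstein criterion: first produce a ``gap'' of vanishing coefficients in the cofactor expansion of $S(\lambda)$ along $\alpha$, then turn any factorization $S(\lambda)=UV$ into a system of convolution relations, and finally propagate divisibility by $U_{\alpha,u}$ using coprimality with $V_{\alpha,v}$.

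First I would record the gap. By Lemma~\ref{lem:cofexp} combined with Lemma~\ref{formalism}, the two highest-degree terms of $S(\lambda)$ along $l_\alpha$ occur in degrees $a_{\alpha,1}(\lambda)$ and $a_{\alpha,2}(\lambda)=a_{\alpha,1}(\lambda)-m_\alpha(\lambda)$, so the intermediate codegree terms $S(\lambda)_{\alpha,a_{\alpha,1}(\lambda)-k}$ vanish for $1\le k\le m_\alpha(\lambda)-1$. Writing $S(\lambda)=UV$ in $\C[P]$, with cofactor expansions $U=\sum_{i\ge 0}e^{(u-i)l_\alpha}U_{\alpha,u-i}$ and $V=\sum_{j\ge 0}e^{(v-j)l_\alpha}V_{\alpha,v-j}$ (so $u+v=a_{\alpha,1}(\lambda)$), and equating codegree coefficients in $\C[P_\alpha]$ yields
\[
\sum_{i+j=k}U_{\alpha,u-i}V_{\alpha,v-j}=S(\lambda)_{\alpha,u+v-k},
\]
which at $k=0$ reads $U_{\alpha,u}V_{\alpha,v}=S(\lambda^\alpha)$ and vanishes for $1\le k\le m_\alpha(\lambda)-1$.

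Next I would establish coprimality of $U_{\alpha,u}$ and $V_{\alpha,v}$ in $\C[P_\alpha]$. Regularity of $\lambda$ and the formula $\lambda^\alpha=\sum_{\beta\in\Delta_\alpha}m_\beta(\lambda)\omega_\beta^\alpha$ from Lemma~\ref{lem:cofexp} show that $\lambda^\alpha$ is dominant regular for $\cR_\alpha$ (or for each irreducible factor of $\cR_\alpha$, should $\alpha$ fail to be a corner root). The standing hypothesis that Theorem~\ref{thm:main} holds for every irreducible based root system of rank less than $r$ thus activates the preceding separability lemma for each irreducible component, giving that $S(\lambda^\alpha)$ is a separable element of $\C[P_\alpha]$. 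Consequently the two factors $U_{\alpha,u}$ and $V_{\alpha,v}$ of $S(\lambda^\alpha)$ share no common irreducible divisor.

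The conclusion then follows by induction on $i$ in the range $0\le i\le m_\alpha(\lambda)-1$. The case $i=0$ is tautological, and assuming $U_{\alpha,u}$ divides $U_{\alpha,u-j}$ for every $j<i$, the convolution relation at level $k=i$ rearranges to
\[
U_{\alpha,u-i}V_{\alpha,v}=-\sum_{j=0}^{i-1}U_{\alpha,u-j}V_{\alpha,v-(i-j)},
\]
whose right-hand side is visibly divisible by $U_{\alpha,u}$; coprimality with $V_{\alpha,v}$ then forces $U_{\alpha,u}\mid U_{\alpha,u-i}$. The main pressure point is the coprimality step, which is the only place invoking the inductive validity of Theorem~\ref{thm:main} for the smaller root system $\cR_\alpha$; once this is in hand, the rest is a purely formal propagation of divisibility along the Eisenstein gap.
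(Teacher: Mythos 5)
Your proof is correct and follows essentially the same route as the paper's: the vanishing of the intermediate codegree terms, the separability of $S(\lambda^{\alpha})$ supplied by the inductive validity of Theorem \ref{thm:main} in lower rank, and the coprimality-plus-induction argument on the convolution relations. The paper compresses that last step into the phrase ``the proof of the classical Eisenstein criterion now applies,'' which is exactly the induction you spelled out.
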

\begin{proof} Let $\mu=\lambda^{\alpha}$. By the previous lemma,
  $S(\mu)$ is a seperable element in $\C[P_{\alpha}]$. Further the
terms of degree $d$
in the cofactor expansion of $S(\lambda)$ along $\alpha$ vanish
in the range
\[a_{\alpha, 1}(\lambda)>d> a_{\alpha,2}(\lambda)=a_{\alpha,
  1}(\lambda)-m_{\alpha}(\lambda).\]
The proof of the classical Eisenstein criterion now applies to
establish the lemma.
\end{proof}

\section{Key lemmas and the proof of the main theorem}
\label{sec:keyprops}

In this section we present the key propositions and the deduction of
the main theorem from these propositions.
The heart of the proof of the main theorem is the following
proposition:

\begin{proposition}\label{prop:key}
Let ${\mathcal R}=(E, \Phi, \Delta)$ be a simple based root
datum  of rank $l$, not isomorphic to $F_4$ or $G_2$.
 Let
$\mu$ be a regular weight in $P_{++}$ and $\eta=\mu+c\omega_{\alpha}$ for
some positive integer $c$, where  $\omega_{\alpha}$ is the fundamental weight
corresponding to $\alpha\in \Delta$. Assume that $\alpha$ is a corner
root in the Dynkin diagram for ${\mathcal R}$ if ${\mathcal R}$ is not
simply laced. Assume further that Theorem \ref{thm:main} is valid for
all simple Lie algebras of rank less than $l$.

Let  $e|d(\mu)$ and $d| e, ~e\neq d$ be natural
numbers. Suppose there exists symmetric non-unit elements $U, ~V\in \O[P]^W$
satisfying the following:
\begin{itemize}
\item \begin{equation}\label{eqn:keyprop1}
UV=C(\mu, d).
\end{equation}

\item   The factor $V$ of $C(\mu, d)$ divides $C(e\rho, d)$.

\item There exists elements $X, ~Y\in \O[P]$ such that
\begin{equation}
\label{eqn:keyprop2}
 UX+VY=C(\eta, d).
\end{equation}
\end{itemize}
Then the following holds:

\begin{enumerate}
\item If $\cR$ is simply laced or if $\alpha$ is a short root, then
$(e, c)>d$.

\item If $\cR$ is not simply laced and  $\alpha$ is a long root, then
$(e, m(\Phi)c)>d.$

\end{enumerate}

\end{proposition}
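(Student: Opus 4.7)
The plan is induction on the rank $l$ of $\cR$, with base case the root system $sl_2$ settled by the arithmetical argument of Section \ref{sec:sl2}. For the inductive step I would work throughout with the cofactor expansion along the distinguished root $\alpha$ from Section \ref{sec:cofactor}; the induction hypothesis is that both Theorem \ref{thm:main} and Proposition \ref{prop:key} itself hold on the rank $l-1$ sub-system $\cR_\alpha$, which is simple because $\alpha$ is a corner root.

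First, extract leading cofactor coefficients along $\alpha$ from the identity $UV = C(\mu, d)$. Writing $U = e^{u\,l_\alpha}U_0 + (\text{lower codegree})$ and $V = e^{v\,l_\alpha}V_0 + (\text{lower codegree})$, and applying Lemma \ref{lem:cofexp} to $S(\mu)$ and to $S(d\rho)$, the leading cofactor identity becomes
\begin{equation*}
U_0\,V_0 \;=\; C(\mu^\alpha, d) \;=\; S(\mu^\alpha)/S(d\rho^\alpha)
\end{equation*}
in $\C[P_\alpha]^{W_\alpha}$. The divisibility $V \mid C(e\rho, d)$ restricts to $V_0 \mid C(e\rho^\alpha, d)$, so $(U_0, V_0)$ is a factorization on $\cR_\alpha$ of exactly the shape handled by the inductive case of Proposition \ref{prop:key} — provided both $U_0$ and $V_0$ are non-units, a reduction which I would make using the Eisenstein-type divisibility (Lemma \ref{lem:eisencrit}) together with the monic-factorization exclusion of Proposition \ref{prop:nonmonic}.

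Next, read off the $c$-information from $UX + VY = C(\eta, d)$. The key observation, via Lemma \ref{lem:cofexp}, is that $\eta^\alpha = \mu^\alpha$ (because $\omega_\alpha^\alpha = 0$), so the leading cofactor of $C(\eta, d)$ coincides with that of $C(\mu, d)$ but sits at a degree higher by $c$, while the second codegree term involves the weight $(s_\alpha\eta)^\alpha = (s_\alpha\mu)^\alpha + |m_{\alpha_n}(\alpha)|\,c\,\omega_{\alpha_n}^\alpha$. Comparing the $c$-shifted cofactor layer of $UX + VY$ against the cofactor structure of $UV = C(\mu, d)$, and feeding $V_0 \mid C(e\rho^\alpha, d)$ into the inductive form of Proposition \ref{prop:key} on $\cR_\alpha$, forces an arithmetic constraint which reads as $(e, c) > d$ in the simply laced and short-root cases and as $(e, m(\Phi)\,c) > d$ in the long-root case. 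The two cases arise entirely from the factor $|m_{\alpha_n}(\alpha)|$, which equals $1$ precisely in the former and $m(\Phi)$ in the latter.

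The main obstacle, and presumably the ``neat swindle'' referred to in the introduction, is keeping the inductive descent legal at each stage: one must know that the projected weight $\mu^\alpha$ on $\cR_\alpha$ is regular and not itself of generalized Weyl denominator type, so that the inductive Theorem \ref{thm:main} really applies to $C(\mu^\alpha, d)$. This is exactly where the universal divisibility from Proposition \ref{prop:weyldenominator} pays off: since $S(\rho_\alpha)$ divides every $S(\lambda^\alpha)$, whenever a degenerate projection appears one can absorb an extra Weyl denominator factor and restart the extraction, thereby closing the induction. The exclusion of $F_4$ and $G_2$ is forced because their Dynkin diagrams have a corner root of each length, so the short/long dichotomy collides with itself; that residual case is exactly what Assumption NMFG handles downstream via Proposition \ref{prop:nonmonic}.
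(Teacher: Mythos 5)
Your induction is set up exclusively along the distinguished root $\alpha$, and you claim that both leading cofactor coefficients of $U$ and $V$ along $\alpha$ may be assumed to be non-units by invoking Proposition \ref{prop:nonmonic}. That proposition only produces \emph{some} corner root $\beta$ of the Dynkin diagram along which both invariant factors fail to be monic; it gives no control over whether $\beta=\alpha$, and in the simply laced case the hypotheses of Proposition \ref{prop:key} do not even require $\alpha$ to be a corner root, so $\cR_\alpha$ need not be simple and neither the inductive Theorem \ref{thm:main} nor the inductive Proposition \ref{prop:key} is available for it. The paper's proof instead expands along the root $\beta$ furnished by Proposition \ref{prop:nonmonic} and splits into two cases: when $\beta\neq\alpha$ the projected weight still carries the shift, $\eta^{\beta}=\mu^{\beta}+c\,\omega_{\alpha}^{\beta}$, and a comparison of leading coefficients of the two hypotheses along $\beta$ finishes the induction at once; the only genuinely hard case is $\beta=\alpha$, which is exactly the case your proposal must face head-on since you expand along $\alpha$.

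In that case your sketch misses the actual mechanism. Since $\eta^{\alpha}=\mu^{\alpha}$, the leading layer of $UX+VY=C(\eta,d)$ reads $U_uX_x+V_vY_y=U_uV_v$ and carries no information about $c$; ``comparing the $c$-shifted cofactor layer'' is not yet an argument, because the layers of $S(\eta)$ of codegree strictly between $0$ and $m_{\alpha}(\eta)$ vanish while the corresponding layers of $X$ and $Y$ are unknown. The paper's ``neat swindle'' lives precisely here (not, as you suggest, in absorbing extra Weyl denominators when $\mu^{\alpha}$ degenerates): because $V$ divides $C(e\rho,d)$, Lemma \ref{lem:div} shows the leading coefficient $V_v$ divides \emph{every} cofactor coefficient of $V$ along $\alpha$; combined with separability (so $U_u$ and $V_v$ are coprime) one proves, by a secondary induction on the codegree $a<m_{\alpha}(\eta)$ using the vanishing of those layers of $S(\eta)$, that $V_v$ divides $X_{x-a}$. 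Only at codegree $m_{\alpha}(\eta)$ does one then extract a new relation $U_uX'+V_v\tilde{Y}=C((s_{\alpha}\eta)^{\alpha},d)$ on $\cR_\alpha$, with increment $c_n=|m_{\alpha_n}(\alpha)|\,m_{\alpha}(\eta)=|m_{\alpha_n}(\alpha)|\,(m_{\alpha}(\mu)+c)$ rather than $|m_{\alpha_n}(\alpha)|\,c$, and the stated conclusion follows from the inductive hypothesis only after using $e\mid d(\mu)\mid m_{\alpha}(\mu)$ to convert $(e,c_n)>d$ into $(e,c)>d$ or $(e,m(\Phi)c)>d$. Finally, your reason for excluding $F_4$ and $G_2$ is off the mark: $B_r$ and $C_r$ also have corner roots of both lengths; the exclusion is there because Proposition \ref{prop:nonmonic} is only conditional (Assumption NMFG) for a class of $F_4$ and $G_2$ weights, and those cases of Proposition \ref{prop:key} are simply not needed for the main theorem.
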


\begin{remark} Although the proposition can be shown to be valid
for  $F_4$ and   $G_2$,
for the proof of the main theorem, we will not require
  the $F_4$ and the $G_2$ cases of the foregoing proposition.
\end{remark}

\begin{remark}
When $\Phi$ is of rank one, the above proposition translates to a
statement about factors of a cyclotomic polynomial, such that their
combination is equal to another cyclotomic polynomial. The
proof of the proposition is arithmetic and is given in Section
\ref{sec:sl2}. Further, it is not required that the factors $U$ and
$V$ be invariant.

For higher ranks,  the arithmetical proof for $sl(2)$ does not
generalize, as the arithmetical properties of a
character, if any,  are not easy to understand.
The proof is by induction on the rank and is given
in Section \ref{sec:pfkeyprop}. The invariant condition on the factors
is required, to ensure that there is a corner root such that the
leading coefficients of $U$ and $V$ are not units
(see Proposition \ref{prop:nonmonic}).
\end{remark}

\subsection{Non-monic invariant factorizations}
In order to reduce the proof of the main theorem to that of
Proposition \ref{prop:key}, we need to rule out certain types of
factorizations: factorizations such that
in the cofactor expansion along any corner root, at
least one of the factors is monic. In the case of $GL(r)$ we are working with
symmetric homogeneous polynomials in $r$-variables.
We look at the polynomials as a
polynomial in $x_1$ with coefficients polynomials in the variables
$x_2, \cdots, x_r$.  In this case, we want to
rule out factorizations,  where either one of the factors is either monic or
the constant coeffecient is monomial. (we call such factorizations as  monic
factorizations, see also Example \ref{ex:glrleadingcoeff}).

Let $\cR=(E, \Phi, \Delta)$ be a based simple root system of rank
$r$.  Let $\alpha\in
\Delta$ be a corner root in the Dynkin associated to $E$.
\begin{definition}
An element
$U\in \C[P]$ is said to be {\em monic} with respect to $\alpha$, if there
is an unique weight $\mu_{\alpha}\in P(U)$ with maximum degree amongst
all the weights occurring in $U$, i.e.,
\[ \omega_{\alpha}^*(\mu_{\alpha})\geq \omega_{\alpha}^*(\mu) \quad
\forall \mu\in P(U)\] with equality if and only if
$\mu=\mu_{\alpha}$.
\end{definition}

If we further assume that $U \in  \C[P]^W$,
since the ring  $\C[P]^W$ is isomorphic to a
polynomial ring in $r$-variables, the element $\mu_{\alpha}$ is well
defined. By symmetry we observe that $\mu_{\alpha}$ is fixed by the
subgroup $W_{\alpha}$ of the Weyl group $W$ fixing
$\omega_{\alpha}$. Hence  $\mu_{\alpha}=u\omega_{\alpha}$ for some
integer $u$.

\begin{definition} \label{dfn:nonmonic}
Let $C$ be an element in $\C[P]$,
and suppose there is a factorization $C=UV$ in
$\C[P]$. The factorization is said to be {\em non-monic}, if there exists
a corner root $\alpha$
in the Dynkin diagram of $\cR$, such that both $U$ and $V$ are not monic
along $\alpha$.
\end{definition}

We show that any possible invariant  factorization is non-monic under
the assumption {\em NMFG:}
\begin{proposition} \label{prop:nonmonic}
Let $\cR=(E, \Phi, \Delta)$ be a simple based root
system of rank $r$. Assume that Theorem \ref{thm:main} is valid for
all simple root systems of rank strictly less than $r$. Let $\lambda$
be a dominant regular weight for $(E, \Phi, \Delta)$. Suppose there is
a factorization,
\[ C(\lambda)=UV,\] where both $U$ and $V$ are in $\C[P]^W$. Then
this factorization is non-monic, except
when the root system is of type  either $G_ 2$ or $F_4$ and the
weight $\lambda$ is of the form,
\[ \lambda=u\omega_{\alpha}+v\omega_{\beta}+d(\lambda)\rho,\quad d(\lambda)=(u,v)\]
where $\omega_{\alpha}$ (resp. $\omega_{\beta}$) is the fundamental
weight corresponding to the short (resp. long) corner root $\alpha$
(resp. $\beta$) in the Dynkin diagram such that the following
inequalities are satsified:
\[ m(\Phi)v \geq u+d(\lambda)\quad \text{and}\quad  u\geq v+d(\lambda).\]
\end{proposition}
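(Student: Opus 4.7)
The plan is to argue by contradiction. Suppose $C(\lambda) = UV$ is a $W$-invariant factorization with $U, V \in \C[P]^W$ non-units, and that for every corner root $\alpha \in \Delta$ at least one of $U, V$ is monic along $\alpha$. By Remark \ref{rem:duality-red-pf}, I reduce to the case $D(\lambda) = S(d(\lambda)\rho)$, so that $S(\lambda) = S(d(\lambda)\rho) \cdot UV$.

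Fix a corner root $\alpha$ and WLOG assume $U$ is monic along $\alpha$, i.e., $U_{\alpha, u}$ is a nonzero constant $c$. By Lemma \ref{lem:cofexp}, the leading cofactor coefficients of $S(\lambda)$ and of $S(d(\lambda)\rho)$ along $\alpha$ are $S(\lambda^\alpha)$ and $S(d(\lambda)\rho_\alpha)$ respectively, the latter being the Weyl denominator of the rank $r-1$ sub-root system $\cR_\alpha$. Equating leading cofactor coefficients in the factorization yields
\[
V_{\alpha, v} \;=\; c^{-1}\,\frac{S(\lambda^\alpha)}{S(d(\lambda)\rho_\alpha)} \;=\; c^{-1}\, C(\lambda^\alpha)\,\frac{S(d(\lambda^\alpha)\rho_\alpha)}{S(d(\lambda)\rho_\alpha)},
\]
an explicit element of $\C[P_\alpha]^{W_\alpha}$. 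By the inductive hypothesis (Theorem \ref{thm:main} applied to $\cR_\alpha$), $C(\lambda^\alpha)$ is absolutely irreducible outside the excluded lower-rank exceptional cases, and the quotient of Weyl denominators factors explicitly via Proposition \ref{prop:weyldenominator}, so the factorization of $V_{\alpha, v}$ is completely determined.

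I then exploit a second corner root $\beta$. For Dynkin diagrams with exactly two corner roots (types $A, B, C, F_4, G_2$), the hypothesis forces one of two cases: (i) the same factor (say $U$) is monic at both $\alpha$ and $\beta$, or (ii) $U$ is monic at $\alpha$ while $V$ is monic at $\beta$. In case (i), $U$ has constant leading coefficient at both corners with top degrees $u, u'$; matching the total cofactor degrees with those of $C(\lambda)$ at both corners, and combining the explicit forms of $V_{\alpha, v}$ and $V_{\beta, v'}$ with the uniqueness Theorem \ref{thm:unique}, forces $V$ to coincide with $C(\lambda)$ up to a constant, contradicting $U$ being a non-unit. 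In case (ii), both leading coefficients $V_{\alpha, v}$ and $U_{\beta, u'}$ are pinned down as above, and propagating the Eisenstein divisibility (Lemma \ref{lem:eisencrit}) from $V$ along $\alpha$ and from $U$ along $\beta$ to the subleading coefficients produces a numerical system on $m_\alpha(\lambda), m_\beta(\lambda)$. These constraints are mutually inconsistent for simply-laced types and for $B_r, C_r$; but for $G_2$ and $F_4$, where $m(\Phi) \in \{2, 3\}$ relaxes the bounds, they are solvable precisely for weights $\lambda = u\omega_\alpha + v\omega_\beta + d(\lambda)\rho$ satisfying $m(\Phi) v \geq u + d(\lambda)$ and $u \geq v + d(\lambda)$. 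For diagrams with three corner roots ($D_r$ and $E_{6,7,8}$, all simply-laced), the analogous analysis at the third corner closes off this exceptional window and yields a direct contradiction.

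The main obstacle is the bookkeeping in case (ii): carefully tracking cofactor degrees along each corner, propagating Eisenstein divisibilities to subleading coefficients, and verifying that the resulting numerical system is genuinely inconsistent outside the $G_2/F_4$ window. The subtle interplay between the lattices $P$ and $m(\Phi)P^*$ in the non-simply-laced case—already visible in the distinction between $\rho$ and $\tilde{\rho}$ in Proposition \ref{prop:dualrhodiv}—creates precisely the escape route for $G_2$ and $F_4$ and must be navigated with care.
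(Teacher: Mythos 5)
Your skeleton matches the paper's (case analysis over which factor is monic at which corner root, identification of the leading cofactor coefficients, Eisenstein-type divisibility, ending in the $F_4/G_2$ window), but the decisive step is asserted rather than proved. In your case (ii) you say that propagating Eisenstein divisibility "produces a numerical system on $m_\alpha(\lambda), m_\beta(\lambda)$" which is inconsistent except for $F_4$ and $G_2$ — this is exactly the content of the proposition, and nothing in the proposal actually derives those inequalities. In the paper they come from two inputs you never invoke: (1) the coprimality of $S(\lambda^\alpha)/S(d\rho^\alpha)$ and $S((s_\alpha\lambda)^\alpha)/S(d\rho^\alpha)$ (Proposition \ref{prop:coprime}), which is what turns "the leading coefficient divides the first $\langle\beta^*,\lambda\rangle$ coefficients" into a bound on the degree; and (2) the fact that $-1$ lies in the Weyl group of $\cR$ and, for a suitable corner root, of $\cR_\alpha$ (Lemma \ref{lem:minus1}), which makes the degree range of an invariant factor along a corner root symmetric, yielding the trivial estimate $2uw_{\beta\alpha}+1\geq\langle\beta^*,\lambda\rangle$ and the sharp estimate $vw_{\alpha\beta}\geq\langle\alpha^*,\lambda\rangle$. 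Only then do the explicit values $w_{\alpha\beta}w_{\beta\alpha}=\tfrac12$ for $B,C$ and $w_{\alpha\beta}=m(\Phi)$, $w_{\beta\alpha}=1$ for $F_4,G_2$ (Lemma \ref{lem:walphabeta}) give the contradiction, respectively the stated exceptional window. Without the $-1$ symmetry and the coprimality input, Eisenstein propagation alone does not close the argument.

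Two further points. Type $A$ cannot be folded into your "simply-laced types are inconsistent" claim: $-1$ is not in the Weyl group of $A_{r}$ for $r\geq 2$, and the paper handles it by a separate bookkeeping argument (Proposition \ref{prop:nonmonicglr}) that plays the $x_1$-leading coefficient against the constant term of a symmetric polynomial, using $W$-invariance to show the constant term of a monic symmetric factor is forced to be non-monomial. For $D$ and $E$, and for your case (i), the efficient route is the observation you never state (Lemma \ref{lem:uniqlambdadecomp} and Corollary \ref{cor:nonmonicsym}): an invariant factor monic along one corner root has unique maximal weight $u\omega_\alpha$, hence cannot be monic along a different corner root; with three corner roots and two factors this immediately gives a corner where neither is monic, and it disposes of case (i) directly, rather than via the uniqueness Theorem \ref{thm:unique} as you suggest. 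As it stands the proposal is a plan whose hard middle — precisely the part you flag as "the main obstacle" — is missing.
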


\begin{remark}
Proposition \ref{prop:nonmonic} can also be considered as establishing
the irreducibility
property in $\C[P]^W$ for a `general´ weight $\lambda$, i.e., those for which
the leading coeffecient of  $C(\lambda^{\alpha}, d(\lambda))$ along
any corner root $\alpha$  is irreducible.
\end{remark}

\subsection{Proof of Main Theorem} We now prove Theorem
\ref{thm:main} assuming the validity of the
 above Propositions \ref{prop:key} and
\ref{prop:nonmonic}.  First of all by duality (see Remark
\ref{rem:duality-red-pf}), we can assume that $\lambda$ is  an element
of $m(\Phi)^iP\backslash m(\Phi)^{i+1}P^*$ for some $i\geq 0$.  In this case,
the greatest common divisor $D(\lambda)$ of the  `obvious' Weyl
denominator type factors of $S(\lambda)$ is $S(d(\lambda)\rho)$, and
we need to show that  $C(\lambda, d)= S(\lambda)/S(d(\lambda)\rho)$ is
irreducible, where $d=d(\lambda)$.

Suppose there is a factorization in $\C[P]^W$,
\begin{equation}\label{eqn:fact}
 C(\lambda, d)=QR.
\end{equation}
We have $C(\lambda, d) \in \Z[P]^W$ and the coefficient of
$e^{\lambda}$ is $1$. 

{\em Claim:} Given any natural number $N$, there exists a UFD 
${\mathcal O}\subset \C$ such that the rational primes $p\leq N$ are
not units in ${\mathcal O}$, and $Q$ and $R$ are in ${\mathcal
  O}[P]^W$ upto multiplying by constants. 

{\em Proof of Claim.} The claim goes under the name of Lefschetz
principle, and for the sake of completeness
 we give a proof based on Gauss' lemma: Suppose $A$ is a
UFD, and $C\in A[x_1, \cdots, x_n]$ is a polynomial such that the gcd
of its coefficients is a unit in $A$. Then if $C$ admits a
factorization in $K[x_1, \cdots, x_n]$ where $K$ is the quotient field
of $A$, then  it admits a factorization
in $A[x_1, \cdots, x_n]$.

To prove the claim, by attaching the coefficients of $Q$ and $R$, we
can assume that the factorization is over a finitely generated field
$E$ over $\Q$. Let $F$ be the algebraic closure of $\Q$ in $E$; this
is a finite extension of $\Q$ and $E$ is a finitely generated purely
transcendental extension over $F$. We can write $E$ as the quotient
field of $A=F[\xi_1, \cdots, \xi_k]$ for some algebraically
independent generators $\xi_1, \cdots, \xi_k$. By Gauss' lemma, we can
assume (after multiplying by some units) that the factors $Q$ and $R$
belong to  $A[x_1, \cdots, x_n]$. By considering them as polynomials
in the variables  $\xi_1, \cdots, \xi_k, ~x_1, \cdots, x_n$ and by
degree considerations, we see that the coefficients of $Q$ and $R$
have to lie in the number field $F$. 

Let ${\mathcal O}_F$ be the ring of algebraic integers in $F$. By
inverting the primes $q\geq N$, we get a semilocal ring ${\mathcal
  O}$. This is a UFD and the primes $p<N$ are not units. By Gauss'
lemma, the factorization is defined over ${\mathcal
  O}[P]^W$, and this proves the claim. 

Let $N$ be a natural number greater
than $2m(\Phi)(\sum_{\alpha\in \Delta}m_{\alpha}(\lambda))$.
We also assume that ${\mathcal O}$ contains the $N$-th roots of unity.

By Proposition \ref{prop:nonmonic},  choose a corner root $\alpha_0$ of the
Dynkin diagram of $\cR$ such that the leading coefficients
$U$ (resp. $V$) of $Q$ (resp. $R$) along $\alpha_0$ with respect to the
co-factor expansion of $Q$ and $R$ along $\alpha_0$ are not units in the
ring ${\mathcal O}[P_{\alpha_0}]^{W_{\alpha_0}}$.  Let
\[\mu=\lambda^{\alpha_0}.\]  Then
\begin{equation}
\C(\mu, d(\lambda)\rho)=UV.
\end{equation}
The main observation that in conjunction with  Proposition
\ref{prop:key} will allow us to prove Theorem \ref{thm:main} is the
following:
\begin{equation}\label{eqn:gcd}
d=d(\lambda)=(d(\mu), m_{\alpha_0}(\lambda))= (d(\lambda^{\alpha_0}),
d((s_{\alpha_0}\lambda)^{\alpha_0})).
\end{equation}

The cofactor expansion of $Q$ and $R$ is given by,
\begin{equation}\label{eqn:expn}
 Q=\sum_{i\geq 0}e^{(q-i)l_{\alpha-)}}Q_{\alpha_0, q-i}, \quad
R=\sum_{i\geq 0}e^{(r-i)l_{\alpha_0}}R_{\alpha_0, r-i},
\end{equation}
where we have denoted by $q$ (resp. $r$) the degrees of the cofactor
expansions of $Q$ and $R$ along $\alpha$. Further since $Q$ and $R$
are assumed to be invariant, the coefficients $Q_{\alpha_0, q-i}$ and
$R_{\alpha_0, r-i}$ are $W_{\alpha_0}$-invariant.

By Eisenstein criterion Lemma \ref{lem:eisencrit}, the leading
coefficient $U$ (resp. $V$) of $Q$ (resp. $R$) along
$\alpha_0$ divides $Q_{\alpha_0,q-i}$ (resp.  $R_{\alpha_0,r-i}$)
for $i <n_{\alpha_0}(\lambda)$. Upon substituting the cofactor
expansions of $Q$ and $R$ in  equation \ref{eqn:fact}, we see that
there exists elements $X, Y$ in the ring
${\mathcal O}[P_{\alpha_0}]^{W_{\alpha_0}}$ such that,
\begin{equation} UX+VY=C(\eta,d(\lambda)).
\end{equation}
Here $\eta$ is given by the formula in Lemma \ref{lem:cofexp}:
\begin{equation}
\eta=(s_{\alpha_0}\lambda)^{\alpha_0}=\mu+c\omega^{\alpha_0}_{\alpha},
\end{equation}
where $\alpha$ is the unique root in the Dynkin diagram attached to
$\cR$ that is connected to the corner root $\alpha_0$, and $c$ is
given by
\begin{equation}\label{eqn:c}
\begin{split}
c&=m_{\alpha_0}(\lambda)|m_{\alpha} (\alpha_0)|\\
&=\begin{cases} m(\Phi)m_{\alpha_0}(\lambda) & \text{if $\alpha_0$
    is long and $\alpha$ is short},\\
m_{\alpha_0}(\lambda) &\text{otherwise}.
\end{cases}
\end{split}
\end{equation}

We now analyze the possible factorizations of $C(\mu, d(\lambda))$
assuming  the validity of Theorem \ref{thm:main} for
$\cR_{\alpha_0}$, and apply Proposition \ref{prop:key} to the root system
$\cR_{\alpha_0}$ together with Equation \ref{eqn:gcd} to arrive at a
contraduction (and thus prove Theorem \ref{thm:main}).

{\em Case (i). } We first analyze the case where the root system
$\cR_{\alpha_0}$ is either simply laced, or if not simply laced, then
 $\mu$ does not belong to
$m(\Phi_{\alpha_0})^{i+1}P^*_{\alpha_0}$.

If $\mu$ is not of Weyl denominator type, then
\begin{equation}
C(\mu, d(\lambda))= C(\mu, d(\mu))C(d(\mu), d(\lambda)),
\end{equation}
where $C(\mu, d(\mu))$ is absolutely irreducible. In this case, let
\[ e=d(\mu),\quad d=d(\lambda), \quad \text{and} \quad  d|e.\]
We also assume that $V$ divides $C(e\rho, d)$.

If $\mu=e\rho$ is of
Weyl denominator type, then $V$ (and $U$ too)  divides $C(e\rho, d)$.

Now suppose that
 we are {\em not} in the case where $\alpha_0$ is long and $\alpha$
is a short root. Then, by Equation \ref{eqn:c},
\[ c=m_{\alpha_0}(\lambda).\]
Applying Proposition \ref{prop:key},
\[ d=d(\lambda)=(d(\mu), m_{\alpha_0}(\lambda))>d,\]
and this gives a contradiction.

Assume now that we are in the situation where $\alpha_0$ long,
$\alpha$ is short (and $\mu$ does not belong to
$m(\Phi_{\alpha_0})^{i+1}P^*_{\alpha_0}$).
In this case, the root system $\cR^{\alpha_0}$ is simply laced and
let $e=d(\mu), ~c=m(\Phi)m_{\alpha_0}(\lambda)$. By Proposition
\ref{prop:key},
\begin{equation}\label{eqn:longshort}
 (d(\mu), m(\Phi)m_{\alpha_0}(\lambda))>d.
\end{equation}
We concentrate only on the contribution by the prime number $m(\Phi)$
to the computation of the gcd's.  Since we have assumed that $\lambda\in
m(\Phi)^i\backslash m(\Phi)^{i+1}P^*$, this implies
\[d(\mu)=m(\Phi)^id'(\mu)\quad \text{with} \quad (d'(\mu),
m(\Phi))=1.\]
Write $d=m(\Phi)^id'$ with $d'$ coprime to $m(\Phi)$.
Let  $m_{\alpha_0}(\lambda)=m(\Phi)^im'_{\alpha_0}(\lambda)$ where
$m'_{\alpha_0}(\lambda)$ is a natural number.
Substituting we get the following couple of equations,
\begin{equation}
\begin{split}
(d'(\mu)m(\Phi)^i, m(\Phi)^{i+1}m'_{\alpha_0}(\lambda))& >m(\Phi)^id'\\
(d'(\mu), m_{\alpha_0}(\lambda))& =d',
\end{split}
\end{equation}
These equations imply that $m(\Phi)$ divides $d'(\mu)$ contradicting
the choice of $\lambda$.

{\em Case (ii).}
Now suppose that  the root system ${\mathcal R}_{\alpha_0}$ is not simply
laced and $\mu \in m(\Phi_{\alpha_0})^{i+1}P_{\alpha_0}^*$.
From the classification of the Dynkin diagrams, this can happen
only when both $\alpha_0$ and its neighbour $\alpha\in \Delta$ have
the same lengths. Further $m(\Phi)=m(\Phi_{\alpha_0})$.

Now if $\alpha_0$ is a long root, the assumption that
$\mu=\lambda^{\alpha_0}$ lies in $m(\Phi)^{i+1}P_{\alpha_0}^*$ implies
that $\lambda$ belongs to $m(\Phi)^{i+1}{P}^*$, a contradiction since we have
assumed that $\lambda$ is not an element of $m(\Phi)^{i+1}{P}^*$.

Hence we can
assume that both $\alpha_0$ and $\alpha$ are short roots. If
$m(\Phi)^{i+1}$ divides $m_{\alpha_0}(\lambda)$, then by our
assumption on $\mu$, we get that $\lambda \in m(\Phi)^{i+1}P^*$,
contradicting our initial choice of $\lambda$. Hence, we can write
\[ m_{\alpha_0}(\lambda))=m(\Phi)^im',\]
with $m'$ coprime to $m(\Phi)$.

We have a
factorization (assuming $\mu$ is not of Weyl denominator type)
\begin{equation}
C(\mu, d(\lambda))= C(\mu,d^*(\mu)\tilde{\rho})C(d^*(\mu)\tilde{\rho},
d(\lambda)),
\end{equation}
where $d^*(\mu)$ defined as in Equation \ref{eqn:d*}
is the largest integer such that
$d\tilde{\rho}$ divides $\mu$. The factor $ C(d^*(\mu)\tilde{\rho},
d(\lambda))$ divides $C(m(\Phi)d^*(\mu){\rho},
d(\lambda)) $. In this case, we take
\[ e=m(\Phi)d^*(\mu), \quad d=d(\lambda).\]
We assume that $V$ is not coprime to
$C(e\rho, d)$.

If $\mu$ is of Weyl denominator type then both the factors divide
$C(e\rho, d)$, where $e$ is defined as above.

By  Proposition \ref{prop:key},
\[ (m(\Phi)d^*(\mu),m_{\alpha_0}(\lambda)) >d.\]
Since $\mu\in m(\Phi)^{i+1}P_{\alpha_0}^*$, it follows that
$d^*(\mu)$ is divisible by $m(\Phi)^i$. We argue as above, and write
\[d^*(\mu)=d'(\mu)m(\Phi)^{i}, \quad d=m(\Phi)^id',\]
where $d'$ is  coprime to $m(\Phi)$. The above inequality can be
written as,
\[ (m(\Phi)d'(\mu),m') >d'.\]
On the other hand, Equation \ref{eqn:gcd} yields,
\[ d'=(d(\mu)/m(\Phi)^i, m').\]
Since $d^*(\mu)$ divides $d(\mu)$, it follows
that the only way this is possible
is if $m(\Phi)$ divides $m'$.   This
contradicts our choice of $\lambda$.

Hence Theorem \ref{thm:main} is proved modulo the proofs of
Propositions \ref{prop:key} and \ref{prop:nonmonic}.

\section{An arithmetic lemma} \label{sec:sl2}

In this section we give an arithmetical argument to establish
Proposition \ref{prop:key} when the root system is isomorphic to
$sl_2$. This turns out to be an arithmetic statement in the context of
polynomial ring in one variable over suitable rings.
It is this arithmetic statement
that is at the heart of the proof of the main theorem in the general
case. The proof of the main theorem is to reduce by induction on the
rank to the arithmetic statement (and not to the irreducibility of
characters say of $GL(3)$). In retrospect, as irreducibility is
connected with divisibility, it is to be expected that the proof of an
irreducibility result depends on some arithmetic.

We first recall the following elementary lemma from cyclotomic theory.
\begin{lemma} \label{lem:cyclo}
Let $e$ be a natural number and let $\zeta_e$ denote a
  primitive $e$-th root of unity.

i) If $e$ is composite, then $(1-\zeta_e)$ is a unit in the ring
 $\Z[\zeta_e]$.

ii) If $e=p^k$ for some prime number $p$, then
$(1-\zeta_e)$ divides $p$ in the ring  $\Z[\zeta_e]$. In particular
if $p|N$,  then $(1-\zeta_e)$ is not a unit in the ring
$\Z[\zeta_e]$; thus in any ring $\O$ containing $\Z[\zeta_e]$ in which
$p$ is not invertible.
\end{lemma}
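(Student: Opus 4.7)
The plan is to extract both parts from the value of the $e$-th cyclotomic polynomial at $x=1$. Recall
$$\Phi_e(x) = \prod_{\substack{1 \leq k \leq e \\ \gcd(k,e)=1}} (x - \zeta_e^k) \in \Z[x],$$
so $\Phi_e(1) = \prod_{\gcd(k,e)=1}(1-\zeta_e^k)$ is a positive integer. Combining this with the identity $1+x+\cdots+x^{e-1} = \prod_{d \mid e,\, d > 1} \Phi_d(x)$ and substituting $x=1$ yields the master identity
$$e = \prod_{\substack{d \mid e \\ d > 1}} \Phi_d(1). \qquad (\star)$$

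For (ii), with $e=p^k$, the substitution $\Phi_{p^j}(x) = \Phi_p(x^{p^{j-1}})$ shows $\Phi_{p^j}(1) = \Phi_p(1) = p$ for every $j \geq 1$; in particular $\Phi_e(1) = p$. Thus $\prod_{\gcd(j,e)=1}(1-\zeta_e^j) = p$, and since $(1-\zeta_e)$ appears as a factor, it divides $p$ in $\Z[\zeta_e]$.

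For (i), I interpret ``composite'' as ``not a prime power'' (the only reading compatible with (ii), since for instance $\Phi_4(1)=2$). Write $e = p_1^{a_1}\cdots p_r^{a_r}$ with $r \geq 2$. The prime-power divisors $d > 1$ of $e$ already contribute $\prod_{i} p_i^{a_i} = e$ to the right-hand side of $(\star)$, so since every $\Phi_d(1)$ for $d > 1$ is a positive integer, all remaining factors (in particular $\Phi_e(1)$) must equal $1$. Therefore $\prod_{\gcd(k,e)=1}(1-\zeta_e^k) = 1$ in $\Z[\zeta_e]$; each factor is an algebraic integer, so each is a unit, and in particular so is $1-\zeta_e$.

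For the final ``in particular'' clause of (ii), I would note that for each $k$ coprime to $e$, the ratio $(1-\zeta_e^k)/(1-\zeta_e) = 1+\zeta_e+\cdots+\zeta_e^{k-1}$ lies in $\Z[\zeta_e]$, and its inverse lies there too by applying the same identity with $k'$ satisfying $kk'\equiv 1\pmod e$. Hence each $1-\zeta_e^k$ is an associate of $1-\zeta_e$, giving $p = u\cdot(1-\zeta_e)^{\varphi(e)}$ for some unit $u \in \Z[\zeta_e]^\times$. If $(1-\zeta_e)$ were invertible in an overring $\O \supseteq \Z[\zeta_e]$ in which $p$ is not, then $p$ would be invertible in $\O$, a contradiction. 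No individual step is a real obstacle; the whole argument is a compact piece of standard cyclotomic arithmetic, and the only point requiring care is phrasing (i) with the correct interpretation of ``composite''.
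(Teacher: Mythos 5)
Your proof is correct and follows essentially the same route as the paper: both arguments reduce to evaluating products of $(1-\zeta)$ over roots of unity at $x=1$, the paper via its quotients $\Phi_{e:p_1^{r_1},\dots,p_k^{r_k}}(x)=\frac{x^e-1}{x-1}\big/\prod_i\frac{x^{p_i^{r_i}}-1}{x-1}$ and you via the factorization of $1+x+\cdots+x^{e-1}$ into genuine cyclotomic polynomials, which is only a difference in bookkeeping. Your reading of ``composite'' as ``not a prime power'' is the intended one, and your associate argument $p=u(1-\zeta_e)^{\varphi(e)}$ for the final clause of (ii) is actually spelled out more fully than in the paper, which only records that $(1-\zeta_e)$ divides $p$.
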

\begin{proof} Let
\[ \Phi_e(x)=\frac{x^m-1}{x-1}=1+x+\cdots +x^{m-1}.\] We have
$\Phi_e(1)=e$. Further if $e$ and $f$ are coprime natural numbers,
then $\Phi_e$ and $\Phi_f$ are coprime polynomials.

For coprime  natural numbers $f_1, \cdots, f_k$ dividing  $e$, define
\[ \Phi_{e:f_1, \cdots, f_k}(x)= \Phi_e(x)/
\prod_{i=1}^k\Phi_{f_i}(x)=\prod_{\zeta^e=1,
  \zeta^{f_i}\neq 1}(x-\zeta), \]
where $i$ goes from $1$ to $k$ in the above product.
The gcd of the coeffecients of $\Phi_e(x)$ is $1$, and it is
clear that $\prod_{i=1}^k\Phi_{f_i}(x)$ divides $\Phi_e(x)$ in
the ring $\Q[x]$. Hence by Gauss's lemma,
$\Phi_{e:f_1, \cdots, f_k}(x)\in \Z[x]$.

Let $e=p_1^{r_1}\cdots p_k^{r_k}$ be the
factorization of $e$ into primes, where $p_i\neq p_j$ are mutually
distinct rational primes. Since $\Phi_{e:p_1^{r_1}, \cdots, p_k^{r_k}}(1)=1$,  i)
  follows from the above product decomposition for
$\Phi_{e:p_1^{r_1}, \cdots, p_k^{r_k}}(x)$.

Similarly,  ii) follows from the fact that
\[\Phi_{p^k:p^{k-1}}(1)=p.\]
\end{proof}

\begin{remark} The proof uses the fact that we are working over a
characteristic zero ring.
\end{remark}

Let $N$ be a natural number (to be chosen later depending on the
dominant regular $\lambda$), and let $\O$  be a ring of
characteristic zero such that the prime numbers $p\leq N$ are not
units in $\O$.

The basic arithmetic proposition is the following:
\begin{proposition}
\label{prop:arith}
Let $e,~f$ be natural numbers not bigger than $N$ and
divisible by
 a natural number  $d$.
Suppose there are non-unit elements $U, ~V\in \O[x]$ and
elements  $X, ~Y$ in $\O[x]$ satisfying the following:

\begin{eqnarray}
UV &= \Phi_{e:d}\\
 UX+VY&=\Phi_{f:d}
\end{eqnarray}
Then the greatest common divisor  $(e,f)$ of $e$ and $f$
is strictly greater than $d$.
\end{proposition}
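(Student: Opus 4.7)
The plan is to argue by contradiction: assume $(e,f)=d$ and derive a contradiction. The first ingredient is the computation of the resultant $\mathrm{Res}(\Phi_{e:d},\Phi_{f:d})\in\Z$. Using the factorization
\[ \Phi_{e:d}(x)=\prod_{\substack{k\mid e\\ k\nmid d}}\Psi_k(x), \]
with $\Psi_k$ the $k$-th cyclotomic polynomial, the multiplicativity of the resultant, and the classical evaluation
\[ \mathrm{Res}(\Psi_{k_1},\Psi_{k_2})=\begin{cases}p^{\varphi(\min(k_1,k_2))}&\text{if $k_1/k_2$ or $k_2/k_1$ is a positive power of a prime $p$,}\\ 1&\text{otherwise,}\end{cases} \]
I would observe that a non-trivial contribution from a pair $(k_1\mid e,\,k_1\nmid d,\,k_2\mid f,\,k_2\nmid d)$ would force $\min(k_1,k_2)\mid (e,f)=d$, contradicting $\min(k_1,k_2)\nmid d$. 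Hence $\mathrm{Res}(\Phi_{e:d},\Phi_{f:d})=1$, and Bezout in $\Z[x]$ supplies $A,B\in\Z[x]$ with $A\Phi_{e:d}+B\Phi_{f:d}=1$. Substituting the hypotheses of the proposition gives
\[ 1 \;=\; A\,UV+B(UX+VY)\;=\;U\cdot(AV+BX)+V\cdot (BY), \]
so the ideal $(U,V)$ equals $\O[x]$, and in particular $V$ is a unit modulo $U$.

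The second step is to enlarge $\O$ so that it contains the $e$-th roots of unity (this is the setting of the main theorem's application, where $N$ is chosen large, and the property that $p\leq N$ is a non-unit is preserved by the extension). Then $\Phi_{e:d}$ splits completely in $\O[x]$. Since $U,V$ are non-units and $\Phi_{e:d}$ is monic of positive degree, both $U$ and $V$ have positive degree (any constant non-unit factor would divide the leading coefficient of $\Phi_{e:d}$), so one can write $U=u\prod_{\zeta\in S_U}(x-\zeta)$ and $V=v\prod_{\eta\in S_V}(x-\eta)$ with $u,v\in\O^\ast$, and $S_U,S_V$ forming a non-trivial partition of $K:=\{\zeta:\zeta^e=1,\,\zeta^d\neq 1\}$. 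Evaluating an identity $1=UP+VQ$ at any $\zeta\in S_U$ yields $V(\zeta)\in\O^\ast$; since $\O$ is a UFD, this forces every factor $(\zeta-\eta)=\zeta(1-\eta/\zeta)$ with $\eta\in S_V$ to be a unit in $\O$. Applying Lemma~\ref{lem:cyclo} to the root of unity $\eta/\zeta$ (non-trivial since $S_U\cap S_V=\emptyset$), one concludes that the order of $\eta/\zeta$ is \emph{not} a prime power, for every pair $(\zeta,\eta)\in S_U\times S_V$.

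The final step is a connectedness claim about $K$: the graph $\Gamma$ with vertex set $K$ and edges $\zeta\sim\eta$ whenever $\eta/\zeta$ has prime-power order is connected. I would prove this by identifying $K$ additively with $\Z/e\setminus H$, where $H$ is the unique subgroup of order $d$. For $a,b\in K$, decompose the difference $a-b=\sum_p c_p$ into Sylow $p$-components; each non-zero $c_p$ has order a power of $p$ and hence corresponds to an edge of $\Gamma$. The straight path $a\to a-c_{p_1}\to\cdots\to b$ may accidentally pass through $H$, but since $b\notin H$ the set $P_b:=\{p:v_p(b_p)<v_p(e)-v_p(d)\}$ is non-empty, and by ordering the primes so that some $q\in P_b$ comes first, every intermediate vertex $y_i=(b_{p_1},\ldots,b_{p_i},a_{p_{i+1}},\ldots)$ contains $b_q$ in its $q$-component and so fails the Sylow-membership condition for $H$ at $q$; thus every intermediate stays in $K$. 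The connectedness of $\Gamma$ then contradicts the absence of $S_U$--$S_V$ edges derived in the second step, so the assumption $(e,f)=d$ is untenable. Since $d\mid (e,f)$, we conclude $(e,f)>d$.

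The chief obstacle, I expect, is making the Sylow/ordering argument of the last paragraph fully precise (including the trivial-step case when $c_q=0$). The resultant formula, Bezout, and the translation into an order condition on the ratios $\eta/\zeta$ via Lemma~\ref{lem:cyclo} are essentially formal once the classical inputs are in hand.
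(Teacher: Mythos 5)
Your argument is correct, but it is organized quite differently from the paper's proof, and the comparison is instructive. The paper uses the hypothesis $(e,f)=d$ only at the very end: it first proves a combinatorial ``Claim'' -- that some $\gamma\in Z_U$ and $\delta\in Z_V$ exist with $\gamma^{-1}\delta$ of prime-power order -- by a minimality argument on the number of primary components of an element of $Z_U$, and then evaluates the identity $UX+VY=\Phi_{f:d}$ at $x=\delta$: the left side is a non-unit by Lemma \ref{lem:cyclo}(ii), while the right side $\Phi_{f:d}(\delta)$ is a unit when $(e,f)=d$ because every ratio $\zeta^{-1}\delta$ with $\zeta\in\mu(f,d)$ has order divisible by two distinct primes (Lemma \ref{lem:cyclo}(i)). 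You instead spend the hypothesis $(e,f)=d$ up front, converting it via the classical cyclotomic resultant evaluation into $\mathrm{Res}(\Phi_{e:d},\Phi_{f:d})=\pm1$ and hence a Bezout identity over $\Z[x]$, so that $(U,V)=\O[x]$; evaluation at every root of $U$ then shows that \emph{no} pair $(\zeta,\eta)\in S_U\times S_V$ can have $\eta/\zeta$ of prime-power order, and the contradiction comes from your connectivity lemma for the graph on $\mu(e,d)$ with prime-power-ratio edges, which is in fact a strengthening of the paper's Claim (connectivity forces a crossing edge for any nontrivial partition, whereas the paper only needs, and only proves, the existence of one such pair). What each route buys: the paper's is more economical, needing only Lemma \ref{lem:cyclo} as arithmetic input and a short ad hoc minimality argument; yours isolates the role of $(e,f)=d$ in a single classical computation (the Apostol-type formula $\mathrm{Res}(\Psi_{k_1},\Psi_{k_2})$, which is genuinely needed to get a unit constant in $\Z[x]$, not just coprimality over $\Q$) and replaces the ad hoc step by a reusable structural statement whose Sylow-decomposition proof, with the ordering trick to avoid the subgroup $H$, is complete as you sketched it (the trivial-step case is harmless). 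Two small remarks: your appeal to $\O$ being a UFD is unnecessary -- a product being a unit forces each factor to be a unit in any commutative ring -- and, as in the paper's own proof, one should note that adjoining the $e$-th roots of unity preserves the non-unitness of the primes $p\leq N$ (lying over for integral extensions), which is what keeps Lemma \ref{lem:cyclo}(ii) applicable.
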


\begin{proof}  For a natural number $e$ let
 ${\mu}_e$ denote the
  group of $e$-th roots of unity; for a rational prime $p$ let
${\mu}_{p^{\infty}}$ denote the group of roots of unity of order a
  power of $p$. By enlarging $\O$ by adjoining roots of unity, we can
  assume that $U$ and $V$ factorizes into linear factors in $\O[x]$.
Given a polynomial $W\in\O[x] $, let $Z_W$ denote the
  zeros of $W$ in $\O$. Upto units in $\O[x]$,
 there is a factorization
\[ U(x)=\prod_{\zeta\in Z_U}(1-\zeta^{-1} x), \quad V(x)=\prod_{\zeta\in
  Z_V}(1-\zeta^{-1} x),\]
so that
\begin{equation}\label{eqn:ed}
Z_U\cup Z_V={\mu}(e,d):=\{\zeta\mid \zeta^{e}=1, \zeta^{d}\neq 1\}.
\end{equation}
We first make the following claim:

{\em Claim}: there exists $\gamma\in Z_U$ (or $Z_V$) and $\delta\in
Z_V$ (resp. $\delta\in Z_U$) such that $\gamma^{-1}\delta$ is an
element of prime power order for some rational prime $p$.

\noindent{\em Proof of Claim}. Choose an element
$\gamma\in Z_U$, such that $\gamma$ can be expressed as a product
$\gamma= \gamma_1\cdots\gamma_k$ having the
following properties:
\begin{itemize}
\item $\gamma_i\in \mu_{p_i^{\infty}}$ for some rational primes $p_i$.
\item $p_i\neq p_j$ for $1\leq i, ~j\leq k$.
\item $k$ is minimal amongst all $\gamma\in Z_U$.
\end{itemize}
Since $Z_U$ is non-empty such a choice is possible by Chinese remainder
theorem.

Suppose for some $i$, $\delta:=\gamma\gamma_i^{-1}\in Z_V$.
 Since $\gamma_i$ is $p_i$-primary,
\[\gamma^{-1}\delta =\gamma_i^{-1}\]
is $p_i$-primary, and this proves the claim in this case.

Hence we can assume that for any $i=1,\cdots, k$,
$\gamma\gamma_i^{-1}\in Z_U\cup
{\bf \mu}_d$. But since $k$ is minimal, this implies that
\[\gamma\gamma_i^{-1}\in {\bf \mu}_d\]
for every $i=1,\cdots, k$.
  If $k>1$, then this implies that
$\gamma\in {\bf \mu}_d$, contradicting the fact that $Z_U$ is coprime
to $\Phi_d$.  Hence we see that there exists an element
$\gamma_p\in \mu_{p^{\infty}}\cap Z_U$ for some rational prime $p$.

Similarly arguing with $Z_V$,
we obtain an element $\delta_q\in \mu_{q^{\infty}}\cap Z_V$
for some rational prime $q$.

If $q=p$, then take $\gamma=\gamma_p$ and
$\delta=\delta_q$. This establishes the claim since the polynomial
$\Phi_{e}$ is separable, hence $\gamma^{-1}\delta\neq 1$.

Assume now that $p\neq q$. It follows that the element
$\gamma_p\delta_q$ belongs to the set ${\mu}(e, d)$ defined as in
Equation \ref{eqn:ed}. If it  belongs to $Z_U$
(resp. $Z_V$), then the pair  $\gamma =\gamma_p\delta_q$
(resp. $\delta=\gamma_p\delta_q$) and $\delta=\delta_q$
(resp. $\gamma=\gamma_p$) produces the elements as required by the
claim. This proves the claim.

Now we deduce the proposition from the claim. Upon substituting $x=\delta$,
we get
\begin{equation}\label{eqn:imp}
U(\delta)X(\delta)+V(\delta)Y(\delta)=\Phi_{f,d}(\delta).
\end{equation}
We have $V(\delta)=0$ and
\[U(\delta)= \prod_{\zeta\in Z_U}(1-\zeta^{-1} \delta).\]
By the claim, there exists a factor of the form
$(1-\gamma^{-1}\delta)$ of $U$, such that $\gamma^{-1}\delta\in {\bf
  \mu}_{p^{\infty}}$ for some rational prime $p$. By Part ii) of Lemma
  \ref{lem:cyclo}, this implies that $U(\delta)$  is is not a unit in
  $\O$.  So the
left hand side of Equation \ref{eqn:imp} is not a unit.
Now,
\[ \Phi_{f: d}(\delta)=\prod_{\zeta\in {\mu}(f, d)}
(1-\zeta^{-1} \delta).\]
Suppose
$(e,f)=d$. Then for any pair of divisors $e'|e, ~f'|f$ with $d$ dividing
$e'$ and $f'$ and not equal to  either $e'$ or $f'$,
the least common multiple of $e'$ and
$f'$ has at least two distinct prime factors.  Hence for any
$\zeta\in {\mu}(f, d)$ of order
$f'$ (take $\delta$ to be of order $e'$), the factor $(1-\zeta^{-1}
\delta)$ is a unit. Hence  $\Phi_{f, d}(\delta)$ is a unit, contradicting
Equation \ref{eqn:imp},  and this proves the proposition.
\end{proof}

\begin{remark} The proof of Proposition \ref{prop:arith} given out here uses
  characteristic zero methods. Consequently,
 the proof of the absolute irreducibility of characters (Theorem \ref{thm:main})
  given in this paper
 does not carry over to positive characteristics, even for those
  admissible weights for which the Weyl character formula is known
  to be valid.
\end{remark}

\begin{remark} \label{rmk:idea}
Let $\lambda=(a_1>a_2>0)$  be a regular weight for
  $GL(3)$, with gcd $(a_1, a_2)=d$.  Consider a factorization $C(\lambda)=UV$
such that the  leading coefficients $U_u$ and $V_v$ respectively
of $U$ and $V$ considered as a
polynomial in the variable $x_1$ with coefficients polynomials in
$x_2, ~x_3$ are not monomials.
 A little argument using divisibility as used in the proof
of Eisenstein criterion, yields a pair of equations of the form,
\[ U_uV_v=(x_2^{a_2}-x_3^{a_2})/(x_2^{d}-x_3^{d}),  \quad
U_uX+V_vY=(x_2^{a_1}-x_3^{a_1})/(x_2^{d}-x_3^{d}),\]
for some polynomials $X$ and $Y$. Proposition \ref{prop:arith} applies
to give a contradiction. The entire schema of this paper is built
around this proof.
\end{remark}

 For the root system given by  $sl_2$,  the fundamental weight is
 given by $\rho$.   The ring $\C[P]$ can be identified
with the ring of Laurent polynomials $\C[x,x^{-1}]$ by substituting
$x=e^{\rho}$.  For any pair of natural numbers $e, ~d$ with $d|e$, the
element $ C(e\rho, d)$ can be written as,
\begin{equation}\label{eqn:laurenttoord}
C(e\rho, d)=
\frac{x^e-x^{-e}}{x^d-x^{-d}}=x^{e-d}\frac{x^{2e}-1}{x^{2d}-1}
=x^{e-d}\Phi_{2e, 2d}.
\end{equation}

\begin{corollary} \label{cor:key-sl2}
Proposition \ref{prop:key} is valid for the root
  system given by the Lie algebra $sl_2$, i.e.,
let $e, ~c, ~f=e+c$ be natural numbers and let
$\mu=e\rho,~\eta=f\rho=(e+c)\rho$ be dominant weights for
the root system $A_1$.
 Let $d$ be a natural number dividing $e,~c$.
Suppose there are non-unit  elements $U, ~V\in \O[x,x^{-1}]$ and
elements  $X, ~Y$ in $\O[x, x^{-1}]$ such that the following pair of
equations are satisfied:
\begin{align} \label{eqn:coeff3}
 UX+VY& =C(f\rho,d),\\
C(e\rho, d)&=UV.
\end{align}
Then $(e, c)=(e,f)>d$.
\end{corollary}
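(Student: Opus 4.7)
The plan is to reduce Corollary \ref{cor:key-sl2} directly to Proposition \ref{prop:arith}, applied with parameters $(2e, 2f, 2d)$ in place of $(e, f, d)$ there. The translation from Laurent polynomials in $\O[x, x^{-1}]$ to ordinary polynomials in $\O[x]$ is supplied by Equation \ref{eqn:laurenttoord}, which gives $C(e\rho, d) = x^{e-d}\Phi_{2e:2d}(x)$ and $C(f\rho, d) = x^{f-d}\Phi_{2f:2d}(x)$.

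First I would fix a natural number $N \geq 2\max(e, f)$ and choose $\O$ as required by the hypothesis of Proposition \ref{prop:arith} (characteristic zero, containing all $N$-th roots of unity, with no prime $p \leq N$ being a unit). I would then use that $\O[x, x^{-1}]$ is the localization of the UFD $\O[x]$ at $x$: every nonzero Laurent polynomial can be written uniquely as $x^n$ times an ordinary polynomial with nonzero constant term. Writing $U = x^{n_U}\widetilde U$ and $V = x^{n_V}\widetilde V$ in this way, the hypothesis that $U$ and $V$ are non-units in $\O[x, x^{-1}]$ transfers to $\widetilde U, \widetilde V$ being non-units in $\O[x]$, because monomials in $x$ are units in $\O[x, x^{-1}]$. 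Combining $UV = x^{e-d}\Phi_{2e:2d}(x)$ with the fact that $\Phi_{2e:2d}(0)\neq 0$ and unique factorization in $\O[x, x^{-1}]$, one obtains $n_U + n_V = e - d$ and the identity
\[ \widetilde U(x)\,\widetilde V(x) = \Phi_{2e:2d}(x) \quad\text{in}\ \O[x]. \]
Applying the analogous decomposition to $X$ and $Y$ and then multiplying the second given equation through by a sufficiently large power of $x$ to clear all negative exponents, I would obtain elements $X'', Y'' \in \O[x]$ and a nonnegative integer $k$ with
\[ \widetilde U(x)\,X''(x) + \widetilde V(x)\,Y''(x) = x^{k}\,\Phi_{2f:2d}(x). \]

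At this stage I would invoke the proof of Proposition \ref{prop:arith} essentially verbatim. Its central claim produces a root $\gamma$ of $\widetilde U$ and a root $\delta$ of $\widetilde V$ such that $\gamma^{-1}\delta$ is a primitive $p$-power root of unity for some rational prime $p$, using only the factorization $\widetilde U\widetilde V = \Phi_{2e:2d}$. Substituting $x = \delta$ into the displayed relation yields $\widetilde U(\delta)\,X''(\delta) = \delta^{k}\,\Phi_{2f:2d}(\delta)$; the prefactor $\delta^{k}$ is a unit in $\O[\delta]$ since $\delta$ is a root of unity, while Lemma \ref{lem:cyclo}(ii) forces the factor $(1 - \gamma^{-1}\delta)$ of $\widetilde U(\delta)$ to be a non-unit. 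Hence $\Phi_{2f:2d}(\delta)$ is a non-unit, and the final paragraph of the proof of Proposition \ref{prop:arith} then rules out the case $(2e, 2f) = 2d$, giving $(2e, 2f) > 2d$ and therefore $(e, f) > d$.

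The main obstacle is purely the bookkeeping in passing from Laurent polynomials to ordinary polynomials: one must track the monomial shift $x^{k}$ carefully on the right-hand side of the transformed second equation and confirm that the non-unit hypothesis survives the decomposition. Conceptually no new input is required beyond Proposition \ref{prop:arith} itself, because the entire arithmetic argument is carried out by evaluation at a root of unity $\delta$, at which any monomial $x^{k}$ is automatically a unit and therefore does not interfere with the cyclotomic non-unit analysis.
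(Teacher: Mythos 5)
Your proposal is correct and follows essentially the same route as the paper: the paper's proof of Corollary \ref{cor:key-sl2} is precisely the reduction to Proposition \ref{prop:arith} via the identity in Equation \ref{eqn:laurenttoord}, yielding $(2e,2f)=2(e,f)>2d$ and hence $(e,f)>d$. Your extra bookkeeping (splitting off monomial factors, noting monomials are units in $\O[x,x^{-1}]$ and that the shift $x^{k}$ is a unit upon evaluation at a root of unity) just makes explicit what the paper leaves implicit, and it is sound.
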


\begin{proof}
From Proposition
\ref{prop:arith}, and by Equation \ref{eqn:laurenttoord}, we get
$(2e, 2f)=2(e,f)>2d$. Hence it follows that $(e,f)>d$.
\end{proof}

\begin{remark} We do not actually require that $U$ and $V$ are
  $W$-invariant in the above corollary. The invariant hypothesis is
  required in the induction step of the proof of Proposition
  \ref{prop:key} at the stage when we use Proposition
  \ref{prop:nonmonic}.
\end{remark}

\section{Proof of Proposition \ref{prop:key}}\label{sec:pfkeyprop}
In this section we give a proof of  Proposition
\ref{prop:key} for a simple based  root system $\cR=(E,\Phi, \Delta)$
of rank $l<r$, needed
for the proof of the main theorem for a simple root system of rank
$r$.  In the
rank one case, Proposition \ref{prop:key} is proved as Corollary
\ref{cor:key-sl2} in the last section using arithmetic
methods. In contrast the proof in the higher rank case proceeds by
 induction on the rank; we reduce to a lower
rank situation using Proposition \ref{prop:nonmonic}.

A fundamental ingredient in the proof of  Proposition \ref{prop:key} is
the `universal divisibility' of the Weyl denominator as in Proposition
\ref{prop:weyldenominator}. This is applied
in the following manner:
\begin{lemma}
\label{lem:div}
Let $d$ be a natural number and let $U$ be a factor of
either   $S(d\rho)$ or  $S(d\tilde{\rho})$.
Suppose $\alpha$ is a corner root. Consider the cofactor
  expansion of $U$ along $\alpha$,
\[U=\sum_{i\geq 0}e^{(u-i)l_{\alpha}}U_{\alpha, u-i}.\]
Then the leading coefficient $U_{\alpha, u}$
divides $U_{\alpha, u-i}$ for all $i\geq 0$.
\end{lemma}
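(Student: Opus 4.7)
The strategy is to use the explicit product factorization of $S(d\rho)$ in Proposition~\ref{prop:weyldenominator}(a), namely
\[S(d\rho)=e^{-d\rho}\prod_{\beta\in\Phi^{+}}(e^{d\beta}-1),\]
and over $\C$ to factor each binomial further as $e^{d\beta}-1=\prod_{\zeta^{d}=1}(e^{\beta}-\zeta)$; the case of $S(d\tilde\rho)$ is handled identically using Proposition~\ref{prop:dualrhodiv}(a). Since $\C[P]$ is a Laurent polynomial ring, and hence a UFD, any factor $U$ of $S(d\rho)$ in $\C[P]$ is, up to a monomial unit $c\,e^{\mu}$, a product of two-term binomials of the form $e^{\beta'}-\eta$ with $\beta'\in P$ a primitive divisor of some $\beta\in\Phi^{+}$ and $\eta\in\C^{*}$.

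Fix the corner root $\alpha$ and split the binomial factors of $U$ according as $\omega_{\alpha}^{*}(\beta')=0$ (so that $\beta'$ lies in $P_{\alpha}$) or $\omega_{\alpha}^{*}(\beta')\geq 1$. This gives, up to a unit, a grouping
\[U=c\,e^{\mu}\cdot U_{0}\cdot\prod_{i=1}^{m}(e^{\beta_{i}}-\eta_{i}),\]
where $U_{0}\in\C[P_{\alpha}]$ collects all factors from the first group, and every $\beta_{i}$ in the second group satisfies $\omega_{\alpha}^{*}(\beta_{i})\geq 1$. Now write $e^{\beta_{i}}-\eta_{i}=e^{\beta_{i}}(1-\eta_{i}e^{-\beta_{i}})$ and expand the product over subsets:
\[\prod_{i=1}^{m}(1-\eta_{i}e^{-\beta_{i}})=\sum_{T\subseteq\{1,\ldots,m\}}(-1)^{|T|}\eta_{T}\,e^{-\sum_{i\in T}\beta_{i}},\qquad \eta_{T}:=\prod_{i\in T}\eta_{i}.\]

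Using the decomposition $\beta_{i}=\omega_{\alpha}^{*}(\beta_{i})\,l_{\alpha}+\beta_{i}^{\alpha}$ of Lemma~\ref{lem:decomp}, and collecting terms by $\omega_{\alpha}^{*}$-degree, one reads off the cofactor expansion along $\alpha$: if $u$ is the top degree, the leading coefficient is $U_{\alpha,u}=c\,e^{\mu^{\alpha}}\,U_{0}\,e^{\sum_{i}\beta_{i}^{\alpha}}$, while the coefficient of degree $u-j$ is
\[U_{\alpha,u-j}=U_{\alpha,u}\cdot\!\!\!\sum_{\substack{T\subseteq\{1,\ldots,m\}\\ \sum_{i\in T}\omega_{\alpha}^{*}(\beta_{i})=j}}\!\!\!(-1)^{|T|}\eta_{T}\,e^{-\sum_{i\in T}\beta_{i}^{\alpha}}.\]
The displayed sum is manifestly an element of the ring of coefficients $\C[P_{\alpha}]$ (its exponents are sums of $\beta_{i}^{\alpha}$), which proves $U_{\alpha,u}\mid U_{\alpha,u-j}$ as required.

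The main bookkeeping obstacle is verifying that the irreducible factors of each $e^{d\beta}-1$ in $\C[P]$ really carry the advertised two-term shape $e^{\beta'}-\eta$: this is subtle when a root $\beta$ is not primitive in $P$ (for example the simple root of $A_{1}$ equals $2\rho$), forcing one to work with the primitive divisor $\beta'=\beta/k$ inside $P$. One checks that $\omega_{\alpha}^{*}(\beta')=\omega_{\alpha}^{*}(\beta)/k\geq 0$ is still an integer, so the dichotomy $\omega_{\alpha}^{*}(\beta')=0$ versus $\omega_{\alpha}^{*}(\beta')\geq 1$ used above is preserved, and the argument goes through without change.
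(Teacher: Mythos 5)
Your argument is correct, but it takes a genuinely different route from the paper. The paper's proof stays at the level of Schur--Weyl sums: by duality it reduces to $S(d\rho)$, notes that the leading coefficient of $S(d\rho)$ along $\alpha$ is $S(d\rho^{\alpha})$, which by the scaled universal divisibility of the Weyl denominator (Proposition \ref{prop:weyldenominator}(b)) divides every coefficient of the cofactor expansion of $S(d\rho)$, and then transfers this to an arbitrary factor $U$ by the Eisenstein-type induction of Lemma \ref{lem:eisencrit}, which needs the separability of $S(d\rho^{\alpha})$ to make the two leading coefficients coprime. You instead exploit that over $\C$ the product formula of Proposition \ref{prop:weyldenominator}(a) (resp.\ \ref{prop:dualrhodiv}(a)) splits $S(d\rho)$ completely into irreducible binomials $e^{\beta'}-\eta$ in the UFD $\C[P]$, so that any factor $U$ is, up to a monomial unit, a product of such binomials; grouping them by whether $\omega_{\alpha}^{*}(\beta')$ vanishes and expanding gives the quotient $U_{\alpha,u-j}/U_{\alpha,u}$ explicitly as a sum of monomials with root-of-unity coefficients. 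This is more self-contained (no separability, no Eisenstein induction) and yields an explicit formula for the quotients, whereas the paper's argument is shorter given the machinery it has already set up and treats $S(d\rho)$ and $S(d\tilde\rho)$ uniformly by duality rather than by repeating the computation.

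One small inaccuracy in your closing remark: $\omega_{\alpha}^{*}(\beta')=\omega_{\alpha}^{*}(\beta)/k$ need not be an integer (for $C_{r}$ a long positive root equals $2\epsilon_{i}=2\beta'$ and $\omega_{\alpha}^{*}(\beta')$ can be $\tfrac12$ for the long corner root $\alpha$). This does not damage the proof: all you use is the dichotomy $\omega_{\alpha}^{*}(\beta')=0$ versus $\omega_{\alpha}^{*}(\beta')>0$, i.e.\ that every binomial outside $U_{0}$ contributes strictly positive degree along $\alpha$, so the grouping by total degree drop and the resulting divisibility go through unchanged (the drops $j$ are then possibly rational, exactly as in the paper's own bookkeeping with $l_{\alpha}$, and the quotient still lies in the coefficient ring by Lemma \ref{lem:decomp}).
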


\begin{proof} It is enough, by duality,
 to prove the lemma  when $U$ is a factor of $S(d\rho)$. The leading
 coefficient of $S(\rho)$ along $\alpha$ is given by
 $S(\rho^{\alpha})$, which is the Weyl denominator for the root system
 $\cR_{\alpha}$. Scaling by $d$, implies that the leading coefficient
 $S(\rho^{\alpha})$ of $S(d\rho)$ along $\alpha$ divides all the
 coefficients in the cofactor expansion of $S(d\rho)$ along
 $\alpha$. Now we apply the proof of the Eisenstein criterion to
 establish the lemma for any factor $U$ of $S(d\rho)$.
\end{proof}

We now begin the proof of Proposition \ref{prop:key}.
 The proposition   is true for the rank one root system by
Corollary \ref{cor:key-sl2}.
By induction, assume that the
proposition holds for all simple root systems of rank less than $l$.
 By Proposition \ref{prop:nonmonic} (this is where we use the fact
 that the factors are $W$-invariant; see also Remark
 \ref{rmk:nonmonicuse} at the end of this section)
there is a corner root $\beta$ of the Dynkin diagram of $\Phi$ such
that the leading coefficient $U_{\beta, u}$ (resp. $V_{\beta, v}$) of $U$
(resp. $V$) in the cofactor  expansion
along $\beta$ is not monic.

Suppose $\beta\neq \alpha$. The root $\alpha$ considered as an element
in the root system $\cR_{\beta}$ continues to have the same property
regarding length, either short or long, as $\alpha$ has. We have,
\[ \eta^{\beta}=\mu^{\beta}+c\omega_{\alpha}^{\beta},\]
where $\omega_{\alpha}^{\beta}$ denotes now the fundamental weight
corresponding to $\alpha$ for the root system $\cR_{\beta}$.
Comparing the leading terms in
Equations \ref{eqn:keyprop1} and  \ref{eqn:keyprop2}, we get
\[\begin{split}
 C(\mu^{\beta},d)&= U_{\beta, u}V_{\beta, v}\\
U_{\beta, u}X_{\beta, x}+V_{\beta, v}Y_{\beta, y}& =C(\eta^{\beta}, d),
\end{split}
\]
where $X_{\beta, x}$ and $Y_{\beta, y}$ are the leading coefficients of $X$ and $Y$
respectively.  Since the leading coefficient of $C(e\rho, d)$ along
$\beta$ is given by $C(e\rho^{\beta}, d)$ and $V_{\beta,v}$ divides it,
we obtain by the  induction hypothesis that
$(e,c)>d$ if either $\cR_{\beta}$ is simply laced or $\alpha$ is a
short root,  and $ (e, m(\Phi_{\beta})c)>d$ if $\cR_{\beta}$ is not
simply laced and
$\alpha$ is a long root.
This establishes Proposition \ref{prop:key}, since
$m(\Phi_{\beta})=m(\Phi)$
if they are non-trivial.

Now suppose $\beta=\alpha$. In this case, we get
\[\eta^{\alpha}=\mu^{\alpha} \quad \text{and}\quad
C(\mu^{\alpha},d)=C(\eta^{\alpha},d)=U_{\alpha,u}V_{\alpha,v}. \]
and it looks as if it is impossible to set up the inductive
process. The trick out here is to observe that the universal
divisibility of the Weyl denominator, together with the fact that $V$
is of Weyl denominator type allow us to perform the inductive step by
considering the expansion upto the second leading non-zero term of
$S(\eta)$.

Considering the leading terms, we get the following equations:
\begin{equation}
\begin{split}  C(\eta^{\alpha},d)& =U_{\alpha,u}V_{\alpha,v}\\
 C(\eta^{\alpha}, d)& =U_{\alpha,u}X_{\alpha,x}+V_{\alpha,v}Y_{\alpha,y}.
\end{split}
\end{equation}
The separability of $C(\eta^{\alpha},d)$ implies that $U_{\alpha,u}$ and
$V_{\alpha,v}$ are coprime. Hence the above equations imply that
$V_{\alpha,v}$ divides $X_{\alpha,x}$.

We continue comparing the coefficients in the cofactor expansion along
$\alpha$ of the equation,
\[S(\eta)= (UX+VY)S(d\rho).\]
Let $T=S(d\rho)$.
Denote by $s$ (resp. $u,~v, ~x,~y, ~t$) the degrees of $S(\eta)$
(resp. $U, ~V, ~X,
~Y, ~T$) with respect to the cofactor expansion along $\alpha$. Here
$s=\omega_{\alpha}^*(\eta)$, and we have
\[ \omega_{\alpha}^*(\eta)-\omega_{\alpha}^*(s_{\alpha}(\eta))=
m_{\alpha}(\lambda).\]
Upon equating the $l_{\alpha}^{s-a}$-degree term in the
cofactor expansion of $S(\eta)$ along $\alpha$, we get
\begin{equation}
\label{exp:seta}
 \sum_{i+j+k=a}U_{u-i}X_{x-j}T_{t-k}
+ \sum_{l+m+n=a}V_{v-l}Y_{y-m}T_{t-n}= S({\eta})_{s-a},
\end{equation}
where we have suppressed the use of the subscript $\alpha$.
The coefficients $S({\eta})_{s-a}$ of $l_{\alpha}^{s-a}$-degree term in the
cofactor expansion of $S(\eta)$ along $\alpha$ are given by,
\begin{equation}
S(\eta)_{s-a}=
\begin{cases} S(\eta^{\alpha}) & \text{if $a=0$},\\
0 & \text{if $0<a < m_{\alpha}(\eta)$},\\
S((s_{\alpha}\eta)^{\alpha})  & \text{if
  $a=m_{\alpha}(\eta)$}.
\end{cases}
\end{equation}
To start the induction, we have $V_v$ divides $X_x$.
By induction assume that
$V_v$ divides $X_{x-j}$ for $j<a$.
Now suppose $0<a< m_{\alpha}(\eta)$
The right hand side in
Equation \ref{exp:seta} is zero, and Equation \ref{exp:seta} gives,
\[ 0= X_{x-a}U_uS(d\rho^{\alpha})+ \sum_{i+j+k=a, ~i>0}U_{u-i}X_{x-j}T_{t-k}
+ \sum_{l+m+n=a}V_{v-l}Y_{y-m}T_{t-n}.\]
Since $V$ is of Weyl denominator type,
by Lemma \ref{lem:div}, the
leading coefficient $V_v$ divides all the coefficients
$V_{v-j}$ for all $j\geq 0$ in the cofactor expansion of $V$ along $\alpha$.
Further in the second sum on the right, the assumption that $i>0$
implies that $j<a$.
Hence by induction and the fact
that $V_v$ is coprime to $S(d\rho^{\alpha})$ and $U_u$ we get that
$V_v$ divides $X_{x-a}$ for $a<m_{\alpha}(\eta)$.

Thus
upon comparing the  $s-a=m_{\alpha}(\eta)$ term in Equation
\ref{exp:seta} we get
\[ (U_uX_{x-a}+V_v \tilde{Y})S(d\rho^{\alpha})=S((s_{\alpha}\eta)^{\alpha}), \]
for some element $\tilde{Y}\in \O[P^{\alpha}]$. Here we have used the
fact that the leading coefficient $S(d\rho^{\alpha})$ divides all the
other coefficients $T_{t-j}$ in the cofactor expansion of $S(d\rho)$.
Now, by Equation \ref{eqn:salpha}
\begin{equation}
(s_{\alpha}\eta)^{\alpha} =\eta^{\alpha}+c_n\omega_{\alpha_n}^{\alpha},
\end{equation}
where $\alpha_n$ is the unique element of $\Delta$ that is connected
to the corner root $\alpha$ (since $\alpha=\beta$ is a corner root).
The value of $c_n$ is given by,
\begin{equation}
\begin{split}
c_n& = |<\alpha_n^*, \alpha>|m_{\alpha}(\eta)\\
&=\begin{cases} m(\Phi)(m_{\alpha}(\mu)+c) & \text{if $\alpha$ is long
      and $\alpha_n$ is short},\\
m_{\alpha}(\mu)+c  &\text{otherwise}.
\end{cases}
\end{split}
\end{equation}
The projection $\omega_{\alpha_n}^{\alpha}$ of the fundamental weight
corresponding to $\alpha_n$ in $\cR$ is the fundamental weight
corresponding to the corner root $\alpha_n^{\alpha}$ in
the root system $\cR_{\alpha}$.

Hence we can apply the  induction hypothesis. Note, by assumption
$e|m_{\alpha}(\mu)$. If $\alpha$ is long and $\alpha_n$ is short, then
the root system $\cR_{\alpha}$ is simply laced. By the inductive
hypothesis, we obtain,
\begin{equation}
d<(e, c_n)=(e,m(\Phi)(m_{\alpha}(\mu)+c))= (e, m(\Phi)c),
\end{equation}
establishing Proposition \ref{prop:key} in this case.

If $\cR_{\alpha}$ is not simply laced and $\alpha_n$ is a long root,
the inductive hypothesis again yields,
\[d<(e, c_n)=(e,m(\Phi_{\alpha})(m_{\alpha}(\mu)+c))= (e, m(\Phi)c)\]
since $m(\Phi_{\alpha})=m(\Phi)$.

Finally, if either $\cR_{\alpha}$ is simply laced or
$\alpha_n^{\alpha}$ is a short root in $\cR_{\alpha}$, then the
inductive hypothesis gives,
\[d<(e, c_n)=(e,(m_{\alpha}(\mu)+c))= (e,c).\]
This proves  Proposition \ref{prop:key}.

\begin{remark} \label{rmk:nonmonicuse}
One can avoid the use of Proposition
  \ref{prop:nonmonic} in this proof, by trying to prove directly the
  required statement.  Indeed in the simply laced case, the leading
  coefficients of a non-constant invariant $V$ dividing a generalized Weyl
  denominator function $S(d\rho)$, can easily seen to be non-trivial
  along any corner root. It is here that we require the factor $V$ to
  be invariant. For the other term $U$, if it contains a factor not of
  Weyl denominator type, it is not too
  difficult to see that there has to be at least one corner root along
  which the leading coefficient is non-unit. It seems  possible to extend
  this argument to cover the non-simply laced cases too, and avoid
  using the more general Proposition \ref{prop:nonmonic}.
\end{remark}

\section{Non-existence of invariant  monic
  factorizations}\label{sec:nonmonic}
Our aim in this section is to prove Proposition
\ref{prop:nonmonic}.  For the proof, we require a coprimality result
given by Proposition \ref{prop:coprime}, which we will prove in
Section \ref{sec:unique} as a corollary of Theorem
\ref{thm:unique} establishing an uniqueness property of $C(\lambda)$.
The proof of Proposition \ref{prop:nonmonic}
is essentially based on the use of
Eisenstein criterion (see Lemma \ref{lem:eisencrit}),  together with
Proposition \ref{prop:coprime} to obtain lower bounds for the degrees along the
corner roots of the factors $U$ and $V$. These bounds suffice except
for a class of weights for $F_4$ and $G_2$ (Assumption {\em NMFG}).

\subsection{Non-existence of symmetric monic factorizations for $GL(r)$}
We first give the proof for $GL(r)$. We restate Proposition \ref{prop:nonmonic} in the context of $GL(r)$:
\begin{proposition} \label{prop:nonmonicglr}
Let $\lambda=(a_1, \cdots, a_{r-1}, 0)$ be a normalized
  highest weight for $GL(r)$.
Suppose there is a factorization
\[ C(\lambda, d(\lambda))=UV, \quad U, ~V\in \C[x_1, \cdots, x_r]^W.\]
Write
\[ U(x_1, \cdots, x_r)= x_1^uU_u+x_1^{u-1}U_{u-1}+\cdots +
(x_2\cdots x_r)^{u_0}U_0,\]
\[ V(x_1, \cdots, x_r)= x_1^vV_v+x_1^{v-1}V_{v-1}+\cdots +
(x_2\cdots x_r)^{v_0}V_0,\]
where $U_u, \cdots, U_0$ and $V_v, \cdots, V_0$ are polynomials in the
variables $x_2, \cdots, x_r$.
Then either $U_u$ and $V_v$ or $U_0$
and $V_0$ are both non-constant polynomials.
\end{proposition}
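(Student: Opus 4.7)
The plan is to assume the conclusion fails and derive a contradiction. Negating the conclusion leaves four sub-cases indexed by which of $\{U_u, V_v\}$ is constant and which of $\{U_0, V_0\}$ is constant. Two of these are ``parallel'' (both constants lying on the same factor) and two are ``crossed''.

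I first handle the parallel sub-cases by a symmetry argument. Suppose $U_u = c$ is a nonzero constant with $u \geq 1$. Because $U$ is symmetric in $x_1, \ldots, x_r$, the coefficient of the pure monomial $x_j^u$ in $U$ equals $c$ for each $j$, so $U|_{x_1 = 0}$ contains the power sum $c(x_2^u + \cdots + x_r^u)$. Since $r \geq 3$, this sum has at least two linearly independent monomial terms, so $U|_{x_1 = 0}$ cannot equal any constant multiple of $(x_2 \cdots x_r)^{u_0}$; hence $U_0$ is non-constant, ruling out the $U$-parallel sub-case. The $V$-parallel sub-case is symmetric.

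For the two crossed sub-cases, by $U \leftrightarrow V$ symmetry it suffices to treat $U_u = c$ constant and $\tilde V_0 := V|_{x_1 = 0} = c_0(x_2 \cdots x_r)^{v_0}$ monomial. Matching leading and constant coefficients in the cofactor expansion of $UV = C(\lambda, d(\lambda))$ along $x_1$ yields
\[
V_v = c^{-1}\, C(\lambda^{(1)}, d(\lambda)), \qquad U|_{x_1 = 0} = c_0^{-1}(x_2 \cdots x_r)^{a_{r-1} - d(\lambda) - v_0}\, C(\lambda^{(r-1)}, d(\lambda)).
\]
Applying the symmetry argument of the previous paragraph to $U$ forces $U|_{x_1 = 0}$ to contain each pure power $c x_j^u$; since $C(\lambda^{(r-1)}, d(\lambda))$ is not divisible by any single $x_j$, the prefactor must be trivial, so $v_0 = a_{r-1} - d(\lambda)$ and the pure monomials $x_j^u$ must occur with nonzero coefficient in $C(\lambda^{(r-1)}, d(\lambda))$.

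To close, I invoke the Eisenstein criterion (Lemma \ref{lem:eisencrit}) on the factor $V$ of $S(\lambda)$ from both ends: from the bottom, $\tilde V_0 \mid \tilde V_i$ for $0 < i < a_{r-1}$, and from the top, $V_v \mid \tilde V_{v-i}$ for $0 < i < a_1 - a_2$. By Proposition \ref{prop:coprime}, $V_v = c^{-1} C(\lambda^{(1)}, d(\lambda))$ is coprime to the monomial $\tilde V_0$, so whenever the two ranges of indices overlap, the product $V_v \cdot \tilde V_0$ divides the corresponding middle coefficient $\tilde V_j$; comparing total degrees, $(r-1) v_0 - j \geq ((r-1) v_0 - v) + (r-1) v_0$ forces $v - j \geq (r-1) v_0$, which is impossible for $j \geq 1$ since $v \leq (r-1)v_0 = \deg V$. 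The main obstacle is the borderline regime in which the two Eisenstein chains do not overlap---namely when both $v \geq a_{r-1}$ and $v \geq a_1 - a_2$ hold; I would handle this by applying Lemma \ref{lem:eisencrit} also with respect to the opposite corner root $e_{r-1} - e_r$ and exploiting the full $S_r$-symmetry of $V$ to recover the missing divisibility.
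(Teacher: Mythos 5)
Your treatment of the two ``parallel'' sub-cases, and your identification of the extreme coefficients in the crossed case ($V_v=c^{-1}C(\lambda^{(1)},d)$ and $U|_{x_1=0}=\pm c_0^{-1}(x_2\cdots x_r)^{a_{r-1}-d-v_0}C(\lambda^{(r-1)},d)$, with the symmetry argument forcing the pure powers $x_j^u$ and hence $v_0=a_{r-1}-d$), are correct and agree in spirit with the paper. But the closing step of the crossed case has a genuine gap, and it is precisely the hard part of the proposition. Your contradiction requires an index $j\geq 1$ lying in both Eisenstein ranges on the \emph{same} factor $V$ (and with $\tilde V_j\neq 0$); when $v$ is large, roughly $v\geq (a_1-a_2)+a_{r-1}-1$, the two ranges are disjoint and nothing is obtained. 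This regime is not vacuous: it is compatible with $v\leq \deg_{x_1}C(\lambda,d)=a_1-d(r-1)$ for a large class of weights, and a proof cannot exclude it a priori. Moreover the fix you propose --- applying Lemma \ref{lem:eisencrit} along the opposite corner root $e_{r-1}-e_r$ to $V$ and using $S_r$-symmetry --- cannot recover the missing divisibility: by that very symmetry, the cofactor expansion of $V$ along $e_{r-1}-e_r$ has the same coefficients (up to permuting variables) as the expansion in powers of $x_1$ read from the bottom, so it reproduces exactly the chain $\tilde V_0\mid \tilde V_i$, $i<a_{r-1}$, that you already used. Note also that the bottom coefficient of $V$ is a monomial, so divisibility by it carries very little information; the non-unit data at the bottom sits on the \emph{other} factor $U$, which your argument never exploits beyond reading off its constant coefficient.

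This is where the paper's proof differs and succeeds: it applies the Eisenstein mechanism to each factor at the end where that factor's extreme coefficient is a non-unit --- from the top on the factor whose leading coefficient is (up to a constant) $C(\lambda^{(1)},d)$, and from the bottom on the factor whose constant coefficient is (up to a monomial) $C(\lambda^{(r-1)},d)$; in the crossed case these are different factors. Each chain is used only in the ``all-or-nothing'' form: if it covered the whole factor, the extreme coefficient would divide the adjacent coefficient of $S(\lambda)$, contradicting the coprimality of $C(\lambda^{(1)},d)$ with $C(\lambda^{(2)},d)$, respectively of $C(\lambda^{(r-1)},d)$ with $C(\lambda^{(r-2)},d)$ (Proposition \ref{prop:coprime}). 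This yields the lower bounds $u\geq a_1-a_2$ and $v\geq a_{r-1}$ (in the paper's labelling). These are then combined with two degree-in-$x_2$ identities forced by $W$-invariance --- a constant extreme coefficient forces the pure power $x_2^{u}$ (resp.\ $x_2^{v}$) to occur, pinning $\deg_{x_2}$ of the opposite extreme coefficient --- and with the relation $u+v=a_1-d(r-1)$, giving the inequalities $a_1-a_2\leq a_{r-1}-d$ and $a_{r-1}\leq a_1-a_2-d$, hence $a_{r-1}\leq a_{r-1}-2d$, a contradiction. To complete your write-up you would need to import this use of the factor $U$ (or an equivalent substitute); as it stands, the borderline regime you flag is a real hole, not a routine verification.
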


\begin{proof} We use the notation as given in Section \ref{sec:cofactor}
  for the cofactor expansion of $GL(r)$. Let $d=d(\lambda)$.
We have
\[ UVS(d\rho)=S(\lambda).\]
Suppose $C(\lambda^{(1)}, d)$ and $C(\lambda^{(r-1)}, d)$ are
both constant polynomials. Then $\lambda=d\rho$ and there is
nothing to prove. We assume that $C(\lambda^{(1)}, d)$
is non-constant (and a similar argument can be given if we assume that
$C(\lambda^{(r-1)}, d)$ is non-constant).

i) Assume $V_v$ is constant.  Since $S_{\lambda^{(1)}}$ is separable, by
Eisenstein criterion, $U_u$ divides $U_u, \cdots,
U_{u-(a_1-a_2)+1}$. Now if $u-(a_1-a_2)+1\leq 0$, this implies $U_u$
divides $U$, and hence $U_u$ divides $S_{\lambda^{(2)}}$. But by
Proposition \ref{prop:coprime},
  $U_u=C(\lambda^{(1)},d)$ is coprime to
$C(\lambda^{(2)}, d)$, and this leads to a contradiction. Hence,
\begin{equation} \label{e1l1}
 u\geq a_1-a_2.
\end{equation}
Since $u+v+d(r-1)=a_1$, we get
\begin{equation}\label{e2l1}
 v\leq a_2-d(r-1).
\end{equation}
Since $V_v$ is constant by $W$-invariance
\begin{equation}\label{e3l1}
 V_0=(x_2^v+\cdots+x_r^v)+ \mbox{lower degree terms}.
\end{equation}
Hence $V_0$ is non-constant. If $U_0$ is also non-constant, then we
are done. Hence we can assume that $U_0$ is constant. Then,
\begin{equation}\label{e4l1}
v={\rm deg}_{x_2}(V_0)={\rm deg}_{x_2}(C(\lambda_0,d))=(a_1-a_{r-1})-d(r-2).
\end{equation}
Hence we get
\begin{equation}\label{e5l1}
 a_1-a_2\leq a_{r-1}-d.
\end{equation}

Similarly arguing with the constant term, we get $V_0$ divides $V_0, \cdots,
V_{a_{r-1}-1}$. By the coprimality of $C(\lambda^{(r-1)}, d)$ and
$C(\lambda^{(r-2)}, d)$, we get
\begin{equation}\label{e6l1}
 v\geq a_{r-1}.
\end{equation}
Hence
\begin{equation}\label{e7l1}
  u\leq (a_1-d(r-1))-a_{r-1}=a_1-a_{r-1}-d(r-1).
\end{equation}
But
\begin{equation}\label{e8l1}
{\rm deg}_{x_2}(U_u)={\rm deg}_{x_2}(C(\lambda^{(1)},
d))=a_2-d(r-2).
\end{equation}
By $W$-equivariance,
\begin{equation}\label{e9l1}
 u\geq a_2-d(r-2).
\end{equation}
Hence we get,
\[ a_2-d(r-2)\leq a_1-a_{r-1}-d(r-1)\]
\begin{equation}\label{e10l1}
a_{r-1}\leq (a_1-a_2)-d.
\end{equation}
Combining the above inequalities, we get
\[ a_{r-1}\leq (a_1-a_2)-d\leq a_{r-1}-2d, \]
clearly a contradiction. This proves the proposition.
\end{proof}

\subsection{Proof for D, E}
The proof of the proposition is easiest for the root systems of type
$D, ~E$ since there are more than two corner roots.
We first observe a simple fact about
polynomial rings $R[x]$ in one variable over an integral domain $R$:
the degree of a polynomial is the sum of the degrees of it's
factors. Applying this to the ring $\C[P]^W$, since $\lambda$ has
maximal degree in the cofactor expansion along any root $\alpha\in
\Delta$, amongst all the weights occurring in $S(\lambda)$, we
have:
\begin{lemma}\label{lem:uniqlambdadecomp} Suppose
$S(\lambda)=UVS(d(\lambda)\rho)$ where $U$ and $V$ are $W$-invariant.
Assume that there are weights $\mu\in P(U),~\nu\in P(V)$ such that
\[\lambda-d(\lambda)\rho=\mu+\nu.\]  Then $\mu$ (resp. $\nu$) has
maximal degree in $P(U)$ (resp. $P(V)$) with respect to any corner
root in $\Delta$.

In particular, if $U$ is monic along $\alpha$, then $\mu$ is the
unique weight in $P(U)$ having maximal degree along $\alpha$ and
$\mu=u\omega_{\alpha}$ for some integer $u$.
\end{lemma}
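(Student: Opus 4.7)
The plan is to exploit the additivity of the ``degree along $\alpha$'' on the factorization $S(\lambda) = U V S(d(\lambda)\rho)$ together with the obvious upper bound that the degree of a weight in a factor cannot exceed the leading degree of that factor.

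First I would recall from Lemma \ref{formalism} that for any dominant regular weight $\pi$, the maximal degree of $S(\pi)$ along a corner root $\alpha$ is attained by the weight $\pi$ itself, and equals $a_{\alpha,1}(\pi) = \langle \omega_\alpha^*, \pi\rangle$. Applying this to both sides of $S(\lambda) = U V S(d(\lambda)\rho)$ and using that the top degree along $\alpha$ in a product of elements of $\C[P]$ is the sum of the top degrees of the factors, I obtain
\[
\langle \omega_\alpha^*, \lambda \rangle \;=\; \deg_\alpha(U) + \deg_\alpha(V) + d(\lambda)\langle \omega_\alpha^*, \rho\rangle,
\]
where $\deg_\alpha(\cdot)$ denotes the leading degree along $\alpha$ in the cofactor expansion \eqref{eqn:cofexp}. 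Rewriting,
\[
\langle \omega_\alpha^*, \lambda - d(\lambda)\rho \rangle \;=\; \deg_\alpha(U) + \deg_\alpha(V).
\]

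Next, I would apply $\omega_\alpha^*$ to the hypothesized decomposition $\lambda - d(\lambda)\rho = \mu + \nu$ with $\mu \in P(U)$, $\nu \in P(V)$, to get $\langle \omega_\alpha^*, \mu\rangle + \langle \omega_\alpha^*, \nu\rangle = \deg_\alpha(U) + \deg_\alpha(V)$. Since any weight in $P(U)$ (resp.\ $P(V)$) satisfies $\langle \omega_\alpha^*, \cdot\rangle \leq \deg_\alpha(U)$ (resp.\ $\leq \deg_\alpha(V)$) by definition of the leading degree, both inequalities must be equalities:
\[
\langle \omega_\alpha^*, \mu \rangle = \deg_\alpha(U), \qquad \langle \omega_\alpha^*, \nu \rangle = \deg_\alpha(V).
\]
This proves the first assertion, and since $\alpha$ was arbitrary, it holds for every corner root.

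For the ``in particular'' clause, if $U$ is monic along $\alpha$ then by Definition of monic there is a \emph{unique} weight $\mu_\alpha \in P(U)$ with $\langle \omega_\alpha^*, \mu_\alpha\rangle = \deg_\alpha(U)$; since the $\mu$ produced above also attains this maximal value, necessarily $\mu = \mu_\alpha$. Finally, because $U$ is $W$-invariant, $P(U)$ is stable under $W$, and in particular under $W_\alpha$, which fixes $\omega_\alpha^*$ pointwise by Lemma \ref{lem:decomp}; hence every element of $W_\alpha\mu$ lies in $P(U)$ and has the same $\alpha$-degree as $\mu$, so uniqueness forces $w\mu = \mu$ for all $w \in W_\alpha$. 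Thus $\mu$ lies in $E^{W_\alpha} = \R\omega_\alpha$ and, being integral, $\mu = u\omega_\alpha$ for some $u \in \Z$. There is no real obstacle here; the argument is a direct bookkeeping from the degree formula in Lemma \ref{formalism} plus the definition of monic.
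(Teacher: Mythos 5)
Your proof is correct and takes essentially the same route as the paper: the first assertion is exactly the paper's degree-additivity observation (degrees of factors add in an integral domain, and $\lambda$, resp.\ $d(\lambda)\rho$, attains the maximal $\alpha$-degree among the weights of its Schur--Weyl sum, by Lemma \ref{formalism}). The ``in particular'' clause likewise mirrors the paper's remark following the definition of monic, where $W$-invariance plus uniqueness of the maximal weight forces it to be $W_{\alpha}$-fixed and hence an integer multiple of $\omega_{\alpha}$.
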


\begin{corollary}\label{cor:nonmonicsym}
 Suppose $U$ is an invariant factor of $S(\lambda)$
  which is monic along a corner root $\alpha$. Then $U$ cannot be
  monic along a different corner root $\beta$.
\end{corollary}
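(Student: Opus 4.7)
The strategy is to invoke Lemma \ref{lem:uniqlambdadecomp}. Writing $S(\lambda)=UVS(d(\lambda)\rho)$ with $V=C(\lambda,d(\lambda))/U$ invariant, I would pick weights $\mu\in P(U)$ and $\nu\in P(V)$ with $\mu+\nu=\lambda-d(\lambda)\rho$; such a pair exists because $e^{\lambda-d(\lambda)\rho}$ is the dominant-order leading monomial of the product $UV=C(\lambda,d(\lambda))$. By the lemma, this $\mu$ simultaneously attains the maximum degree in $P(U)$ along every corner root of $\Delta$.

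Now suppose for contradiction that $U$ is monic along both $\alpha$ and a second corner root $\beta$. The uniqueness in the definition of monicity, together with the $W_{\alpha}$-invariance of $U$, forces the unique maximum-degree weight of $P(U)$ along $\alpha$ to be $W_{\alpha}$-fixed; the fixed set of $W_{\alpha}$ in $E$ is the line $\R\omega_{\alpha}$, so this weight has the form $u\omega_{\alpha}$ for some integer $u$. By the previous paragraph it must equal $\mu$, hence $\mu=u\omega_{\alpha}$. The identical reasoning along $\beta$ gives $\mu=u'\omega_{\beta}$ for some integer $u'$. Since $\alpha\neq\beta$, the fundamental weights $\omega_{\alpha}$ and $\omega_{\beta}$ are linearly independent in $E$, which forces $u=u'=0$, so $\mu=0$.

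Finally, the conclusion $\mu=0$ is extremely restrictive: every nonzero $\tau\in P(U)$ must satisfy $\omega_{\alpha}^*(\tau)<0$. But $P(U)$ is $W$-invariant, so every nonzero $W$-orbit in it contains a dominant representative $\tau_+=\sum_{\gamma\in\Delta}c_{\gamma}\omega_{\gamma}$ with all $c_{\gamma}\geq 0$ and not all zero; expanding yields $\omega_{\alpha}^*(\tau_+)=\sum_{\gamma}c_{\gamma}<\omega_{\alpha}^*,\omega_{\gamma}>$, which is strictly positive because the entries of the inverse Cartan matrix of a simple root system are all strictly positive. This contradicts $\omega_{\alpha}^*(\tau_+)<0$ and forces $P(U)=\{0\}$, so $U$ is a nonzero constant and hence a unit---contradicting the premise that $U$ is a nontrivial factor. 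The whole argument is driven by Lemma \ref{lem:uniqlambdadecomp}; the only other ingredients are linear independence of distinct fundamental weights and the positivity of the inverse Cartan matrix, so there is no substantive obstacle to overcome.
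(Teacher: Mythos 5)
Your proof is correct and takes essentially the paper's route: Lemma \ref{lem:uniqlambdadecomp} supplies a single weight $\mu\in P(U)$ of maximal degree along every corner root, and monicity plus $W$-invariance forces $\mu\in\Z\omega_{\alpha}\cap\Z\omega_{\beta}=\{0\}$. Your extra step ruling out $\mu=0$ via the strict positivity of $\langle\omega_{\alpha}^*,\omega_{\gamma}\rangle$ merely makes explicit the tacit assumption in the paper's one-line proof that $U$ is a non-unit, so no further comment is needed.
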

Indeed, the unique `highest weight' $\mu\in P(U)$ cannot
simultaneously be a multiple of $\omega_{\alpha}$ and
$\omega_{\beta}$. This corollary expresses the fact used in the proof
of Proposition \ref{prop:nonmonicglr},  that if a
symmetric homogeneous polynomial in at least two variables is monic
considered as a polynomial in one variable, then its constant term
cannot be a monomial.

\begin{corollary} Proposition \ref{prop:nonmonic} is true for the
simple root systems of type $D$ and $E$.
\end{corollary}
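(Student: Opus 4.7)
The plan is to combine Corollary \ref{cor:nonmonicsym} with the structural fact that the Dynkin diagrams of type $D$ and $E$ have \emph{at least three} corner vertices. Recall that $D_n$ for $n\geq 4$ has three endpoints (the two prongs of the fork plus the opposite end), and each of $E_6,E_7,E_8$ also has exactly three endpoints (the three ends of the three branches emanating from the trivalent node).

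First I would spell out what a monic factorization would force. Suppose for contradiction that the factorization $C(\lambda)=UV$ is monic in the sense of Definition \ref{dfn:nonmonic}; that is, for \emph{every} corner root $\alpha\in\Delta$ at least one of $U,V$ is monic along $\alpha$. Let
\[
S_U=\{\alpha\in\Delta \text{ corner} \mid U \text{ is monic along } \alpha\},\qquad
S_V=\{\alpha\in\Delta \text{ corner} \mid V \text{ is monic along } \alpha\}.
\]
By hypothesis $S_U\cup S_V$ contains every corner root of the Dynkin diagram.

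Next I would invoke Corollary \ref{cor:nonmonicsym}, which asserts that an invariant factor of $S(\lambda)$ cannot be monic along two distinct corner roots (because the unique maximal-degree weight $\mu\in P(U)$ forced by monicity would have to be simultaneously a multiple of $\omega_\alpha$ and $\omega_\beta$). Applied to $U$ and $V$ separately, this gives $|S_U|\leq 1$ and $|S_V|\leq 1$, hence $|S_U\cup S_V|\leq 2$.

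Finally I would conclude: since the Dynkin diagram of type $D$ or $E$ carries at least three corner roots, $|S_U\cup S_V|\leq 2$ cannot exhaust them, contradicting the assumption that the factorization is monic. Therefore there exists a corner root $\alpha$ along which both $U$ and $V$ are non-monic, which is exactly Definition \ref{dfn:nonmonic}. There is really no obstacle here; the entire content has already been packaged into Corollary \ref{cor:nonmonicsym}, and the argument for $D,E$ reduces to the combinatorial observation that three corners cannot be covered by two singletons. The difficulty of Proposition \ref{prop:nonmonic} is concentrated in the remaining cases $A_r$, $B_r$, $C_r$, $F_4$, $G_2$, where only two corner roots are available and the counting argument fails.
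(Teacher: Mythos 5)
Your proposal is correct and is essentially the paper's own argument: the paper likewise deduces the $D$, $E$ case from Corollary \ref{cor:nonmonicsym} together with the fact that these Dynkin diagrams have three corner roots, so the two invariant factors can be monic along at most two of them. Your write-up merely makes the counting explicit.
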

\begin{proof}  Since there are three corner roots for the root
systems of type $D, ~E$, this implies that there is at least one
corner root at which both $U$ and $V$ are not monic.
\end{proof}

\subsection{Non-simply laced root systems} From now onwards we
 consider a
non-simply laced based root systems $\cR=(E, \Phi, \Delta)$.
By Corollary \ref{cor:nonmonicsym}, we have
\begin{corollary}\label{cor:moniclambda}
Suppose
$S(\lambda)=UVS(d(\lambda)\rho)$ where $U$ and $V$ are
$W$-invariant. Assume further that $U$ is monic along one of the
corner roots $\alpha$ and $V$ is monic along the other corner root
$\beta$ in the Dynkin diagram associated to $\cR$. Then,
\begin{equation}
\lambda=u\omega_{\alpha}+v\omega_{\beta}+d(\lambda)\rho,
\end{equation}
with $d(\lambda)=(u+d(\lambda), v+d(\lambda))$.
\end{corollary}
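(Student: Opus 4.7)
The plan is to apply Lemma~\ref{lem:uniqlambdadecomp} to identify the dominance-order highest weights of $U$ and $V$ via the monic hypotheses, read off the form of $\lambda$, and finally unwind the definition of $d(\lambda)$ to verify the arithmetic identity.

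Since $e^{\lambda}$ is the unique dominance-maximum monomial of $S(\lambda)$ and $e^{d(\lambda)\rho}$ is that of $S(d(\lambda)\rho)$, the identity $S(\lambda)=UVS(d(\lambda)\rho)$ forces $UV$ to have unique dominance-maximum weight $\lambda-d(\lambda)\rho$; hence there exist dominance-maxima $\mu\in P(U)$ and $\nu\in P(V)$ with $\mu+\nu=\lambda-d(\lambda)\rho$, so that the hypothesis of Lemma~\ref{lem:uniqlambdadecomp} is met. Invoking the ``in particular'' clause of that lemma, monicity of $U$ along $\alpha$ forces $\mu=u\omega_{\alpha}$ for some integer $u$, and monicity of $V$ along $\beta$ forces $\nu=v\omega_{\beta}$ for some integer $v$; substituting yields
\[
\lambda\;=\;u\omega_{\alpha}+v\omega_{\beta}+d(\lambda)\rho.
\]

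For the arithmetic identity, I would read off the coefficients of $\lambda$ in the fundamental-weight basis: $m_{\alpha}(\lambda)=u+d(\lambda)$, $m_{\beta}(\lambda)=v+d(\lambda)$, and $m_{\gamma}(\lambda)=d(\lambda)$ for every $\gamma\in\Delta\setminus\{\alpha,\beta\}$. The definition $d(\lambda)=\gcd_{\gamma\in\Delta}m_{\gamma}(\lambda)$ immediately gives $d(\lambda)\mid u$ and $d(\lambda)\mid v$, so $d(\lambda)\mid(u+d(\lambda),v+d(\lambda))$. The step I expect to be the main obstacle is the reverse divisibility $(u+d(\lambda),v+d(\lambda))\mid d(\lambda)$: since a strict common divisor $e>d(\lambda)$ of $u+d(\lambda)$ and $v+d(\lambda)$ need not divide the non-corner coefficients, it does not by itself contradict the maximality of $d(\lambda)$, and so the argument must exploit the monicness more substantively. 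My plan would be to compare the sub-leading terms of the cofactor expansions of $U$ and $V$ along the corner roots $\alpha$ and $\beta$, using the $W$-invariance together with the pinned-down top weights $u\omega_{\alpha}$ and $v\omega_{\beta}$ (and the universal divisibility of $S(\rho)$ from Proposition~\ref{prop:weyldenominator}) to extract a divisibility obstruction that forces any common divisor of $u+d(\lambda)$ and $v+d(\lambda)$ into $d(\lambda)$.
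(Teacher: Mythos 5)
Your derivation of the decomposition $\lambda=u\omega_{\alpha}+v\omega_{\beta}+d(\lambda)\rho$ is essentially the paper's own argument: the paper also feeds the two monic hypotheses into Lemma \ref{lem:uniqlambdadecomp} (having noted via Corollary \ref{cor:nonmonicsym} that an invariant factor can be monic along at most one corner root), reads off the unique maximal weights $u\omega_{\alpha}$ of $U$ and $v\omega_{\beta}$ of $V$, and adds them to $d(\lambda)\rho$. So for that part you are on the paper's route, and your verification of the lemma's hypothesis by degree/dominance considerations is the same (equally brief) reasoning the paper uses just before stating the lemma.

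Concerning the clause $d(\lambda)=(u+d(\lambda),v+d(\lambda))$: you are right that only $d(\lambda)\mid(u+d(\lambda),v+d(\lambda))$ is formal, and right that in rank $\geq 3$ the reverse divisibility does not follow from the shape of $\lambda$ alone (e.g.\ $u=v=d(\lambda)$ gives corner coefficients $2d(\lambda)$ and all other coefficients $d(\lambda)$). But the step you flag as the ``main obstacle'' is not something you could have recovered from the paper: the paper's proof of this corollary ends with the decomposition and offers no argument at all for the gcd clause. In rank two ($B_2$, $G_2$) the clause is literally the definition of $d(\lambda)$, since then $\Delta=\{\alpha,\beta\}$; in the higher-rank cases it is asserted without proof and, in fact, never used --- the contradiction for $B_r$, $C_r$ in Section \ref{sec:nonmonic} uses only the decomposition together with the two degree inequalities, and for $F_4$ the corresponding condition reappears only inside Assumption NMFG, and there in the slightly different form $d(\lambda)=(u,v)$. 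So your proposed analysis of sub-leading cofactor terms would be extra work beyond the paper; the honest conclusion is that your proof matches the paper on everything the paper actually proves, and the unproved gcd clause is an imprecision of the paper (harmless for rank two and for the way the corollary is applied) rather than a gap in your understanding of its argument.
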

\begin{proof}
If $U$ is symmetric and monic  along a corner root $\alpha$,  by
Corollary \ref{cor:nonmonicsym},
 $U$ cannot be monic
along the other corner root, say $\beta$. If $V$ is also monic along
$\alpha$, then Proposition \ref{prop:nonmonic} is true. Hence we can
assume that $V$ is monic along $\beta$. Assume that $U$ (resp. $V$)
has an unique maximal
weight $u\omega_{\alpha}$ (resp. $v\omega_{\beta}$).  This implies
that the weight $\lambda$ can
be written as
$\lambda=u\omega_{\alpha}+v\omega_{\beta}+d(\lambda)\rho$.
\end{proof}

We recall the following fact \cite[Exercise 5, Section 13, page
72]{H}, and it's consequences of relevance to us:

\begin{lemma} \label{lem:minus1} Let $\cR=(E, \Phi, \Delta)$ be a simple
based root system.

 \begin{enumerate}
\item Suppose $\cR$ is of type $A_1, ~B_r, ~C_r, ~F_4, ~G_2$. Then  $-1$
is an element of the Weyl group of $\cR$.

\item Suppose $\cR$ is of type $B_2, ~C_2, ~F_4, ~G_2$ and $\alpha$ be
any corner root in the Dynkin diagram associated to $\cR$. Then $-1$  is
an element of the Weyl group of $\cR_{\alpha}$.

\item If $r\geq 3$, then  there is a corner root in the Dynkin diagram
associated to $\cR=B_r$ (resp. $C_r$) such that $\cR_{\alpha}$ is again of
type $B$ (resp. $C$). In particular  $-1$  is an element of the Weyl
group of $\cR_{\alpha}$.
\end{enumerate}
\end{lemma}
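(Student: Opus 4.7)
The lemma is a case-by-case verification using the classification of simple root systems and their standard Euclidean realizations, so there is no conceptual obstacle beyond careful bookkeeping with Dynkin diagrams. The unifying observation is that $-1 \in W(\cR)$ if and only if the longest element $w_0$ of $W(\cR)$ acts as $-1$ on $E$, and I will exhibit $-1$ in each required $W$ by explicit construction, then reduce Parts (2) and (3) to Part (1) by reading off $\cR_{\alpha}$ from the Dynkin diagram.

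For Part (1), I would handle the cases separately. For $A_1$, the nontrivial reflection in the one-dimensional space $E$ is $-1$. For $B_r$ and $C_r$, using the standard realization in $\R^r$ where $W$ consists of all signed permutations of the coordinates $e_1,\dots,e_r$, the product of the $r$ coordinate sign changes already equals $-I$. For $G_2$, the Weyl group is the dihedral group of order $12$ acting on the Euclidean plane, and rotation by $\pi$ (the cube of the rotation by $\pi/3$) is $-I$. For $F_4$, I would appeal to the standard realization in $\R^4$, where $W$ contains all signed permutations of $(e_1,\dots,e_4)$, and the product of four sign changes gives $-I$; equivalently, one can invoke the classification statement $w_0 = -1$ for $F_4$.

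For Parts (2) and (3), the work is Dynkin diagram inspection followed by an appeal to Part (1). In (2): for $B_2 = C_2$ and for $G_2$, removing either corner root leaves an $A_1$-subdiagram; for $F_4$, whose diagram is $\circ{-}\circ \Rightarrow \circ{-}\circ$, removing one corner root yields $B_3$ and removing the other yields $C_3$; in all cases Part (1) applies to $\cR_{\alpha}$. In (3): label the Dynkin diagram of $B_r$ as $\alpha_1 - \alpha_2 - \cdots - \alpha_{r-1} \Rightarrow \alpha_r$, with $\alpha_r$ the short corner root and $\alpha_1$ the long corner root. Removing $\alpha_1$ leaves the subdiagram $\alpha_2 - \cdots - \alpha_{r-1} \Rightarrow \alpha_r$, which is of type $B_{r-1}$; the same argument (removing the long corner root at the simply-laced end) works for $C_r$. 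Since $r - 1 \geq 2$, Part (1) applied to $B_{r-1}$ (resp.\ $C_{r-1}$) gives $-1 \in W(\cR_{\alpha_1})$.

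The only non-routine verification is the $F_4$ case of Part (1), which does not reduce to a smaller simply-laced piece and is a genuinely small computation rather than a diagrammatic reduction; everything else follows mechanically from the classification together with the observation that removing a suitable corner vertex from a $B$- or $C$-type diagram returns a diagram of the same type with rank reduced by one.
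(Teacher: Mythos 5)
Your proof is correct, but note that the paper does not actually prove this lemma: it is recalled as a known fact, with a citation to Humphreys (Exercise 5, Section 13, page 72), and Parts (2) and (3) are treated as immediate Dynkin-diagram inspections. Your write-up supplies the missing verification explicitly, and the details check out: for Part (1), $-1$ is realized by the nontrivial reflection for $A_1$, by the product of all coordinate sign changes inside the signed-permutation Weyl groups of $B_r$ and $C_r$, by rotation through $\pi$ in the dihedral group of order $12$ for $G_2$, and for $F_4$ via the $B_4$ subsystem $\{\pm e_i, \pm e_i\pm e_j\}$ whose reflections already generate all signed permutations of $\R^4$; for Parts (2) and (3), the identification of $\cR_{\alpha}$ with the induced subdiagram ($A_1$ in rank $2$; $B_3$ or $C_3$ for the two corners of $F_4$; $B_{r-1}$, resp. $C_{r-1}$, for $B_r$, resp. $C_r$) reduces everything to Part (1). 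One cosmetic slip: in Part (3) for $C_r$ the corner root at the simply-laced end is \emph{short} (the long corner root of $C_r$ is the one at the multiple bond), so your parenthetical ``removing the long corner root at the simply-laced end'' misdescribes that case; what your argument actually uses, and what is true, is that deleting the corner vertex farthest from the multiple edge leaves a diagram of the same type with rank lowered by one. With that wording fixed, your explicit case-by-case argument is a complete, self-contained substitute for the paper's citation.
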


For $\alpha, ~\beta\in
\Delta$, let
\begin{equation}\label{walphabeta}
w_{\alpha\beta}=<\omega_{\alpha}^*,\omega_{\beta}>.
\end{equation}
This quantity is independent of the $W$-invariant inner product on
$E$.
The consequence of the first part of the foregoing lemma is  that the
weights occurring in $S(\lambda)$ are invariant with respect to the
map $p\mapsto -p, ~p\in E$. An application of the proof of
Eisenstein's criterion yields  the trivial estimate:
\begin{lemma}\label{lem:monic-trivial-est}
With notation as in Corollary  \ref{cor:moniclambda}, the following holds:
\begin{equation}\label{eqn:trivialest}
 2uw_{\beta\alpha}+1~\geq ~<\beta^*, \lambda>.
\end{equation}
\end{lemma}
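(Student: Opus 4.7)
My plan is to derive the bound by combining Eisenstein's criterion (Lemma \ref{lem:eisencrit}) applied to the factor $U$ along the corner root $\beta$ with the $-1 \in W$ symmetry furnished by the first part of Lemma \ref{lem:minus1}.

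First I would pin down the $\beta$-support of $U$. For the non-simply-laced types at hand, $-1 \in W$, and since $U$ is $W$-invariant, the set $P(U)$ is stable under negation. Because $U$ is monic along $\alpha$ with unique highest weight $u\omega_\alpha$, the unique lowest weight is $-u\omega_\alpha$. Pairing with $\omega_\beta^*$ gives
\[
-uw_{\beta\alpha} \;=\; \omega_\beta^*(-u\omega_\alpha) \;\le\; \omega_\beta^*(\mu) \;\le\; \omega_\beta^*(u\omega_\alpha) \;=\; uw_{\beta\alpha}
\]
for every $\mu \in P(U)$, so the cofactor expansion of $U$ along $\beta$ is supported in the codegrees $\{0, 1, \ldots, 2uw_{\beta\alpha}\}$.

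Next I would apply Lemma \ref{lem:eisencrit} to $U$, viewed as a factor of $S(\lambda) = U\cdot V\cdot S(d(\lambda)\rho)$, along $\beta$: the leading coefficient $U_{\beta,\,uw_{\beta\alpha}}$ divides $U_{\beta,\,uw_{\beta\alpha}-i}$ in $\C[P_\beta]$ for every $0 < i < \langle\beta^*, \lambda\rangle$. The proof of that lemma propagates this divisibility codegree by codegree through the convolution, using the separability of $S(\lambda^\beta)$ to ensure the coprimality of $U_{\beta,\,uw_{\beta\alpha}}$ with the top $\beta$-coefficient of $V \cdot S(d(\lambda)\rho)$ (which is a monomial multiple of $S(d(\lambda)\rho^\beta)$, since $V$ is monic along $\beta$). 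The output is a sequence of non-trivial relations at the codegrees $1, \ldots, \langle\beta^*, \lambda\rangle - 1$, and these must fit inside the support $\{0, 1, \ldots, 2uw_{\beta\alpha}\}$ of $U$ along $\beta$; this yields $\langle\beta^*, \lambda\rangle - 1 \le 2uw_{\beta\alpha}$, as claimed.

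The main subtlety lies in this last step. The bare statement of Lemma \ref{lem:eisencrit} tolerates trivial divisibilities (``$U_{\beta,\,uw_{\beta\alpha}}$ divides $0$'') whenever a codegree falls outside the support of $U$, so it does not by itself force the inequality. The ``proof of Eisenstein'' argument tracks the convolution equations directly: at each step the previously deduced divisibilities combine with the convolution identity to produce a non-zero multiple of $U_{\beta,\,uw_{\beta\alpha}}$ on one side, which cannot be matched on the other side once the $\beta$-codegree exhausts the support of $U$. It is this structural obstruction, rather than the bare divisibility, that converts the range count into the bound $2uw_{\beta\alpha} + 1 \ge \langle\beta^*, \lambda\rangle$.
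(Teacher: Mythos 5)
Your first two steps coincide with the paper's: the symmetry $-1\in W$, together with the fact (Lemma \ref{lem:uniqlambdadecomp}) that the unique maximal weight $u\omega_{\alpha}$ of $U$ is maximal along \emph{every} corner root, confines the $\beta$-support of $U$ to the codegrees $0,\dots,2uw_{\beta\alpha}$, and the Eisenstein argument gives that $U_{\beta,uw_{\beta\alpha}}$ divides $U_{\beta,uw_{\beta\alpha}-i}$ for $i<\langle\beta^*,\lambda\rangle$. You are also right that the bare divisibility becomes vacuous outside the support, so a pure count of codegrees proves nothing. But the repair you propose is not a proof: for every codegree $a$ with $0<a<\langle\beta^*,\lambda\rangle$ the corresponding convolution identity in the expansion of $S(\lambda)=UVS(d(\lambda)\rho)$ along $\beta$ simply equates a sum of products of coefficients to zero, and such identities are entirely consistent with $U_{\beta,uw_{\beta\alpha}}$ dividing every $\beta$-coefficient of $U$; no ``non-zero multiple of $U_{\beta,uw_{\beta\alpha}}$ that cannot be matched'' is forced by these equations alone. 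The claimed structural obstruction therefore does not materialize, and your argument stops short of the inequality.

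What closes the argument in the paper is one further step together with an external coprimality input which your write-up never invokes. If $2uw_{\beta\alpha}+1<\langle\beta^*,\lambda\rangle$, the Eisenstein divisibilities cover the entire support, so $U_{\beta,uw_{\beta\alpha}}$ divides every coefficient of $U$ along $\beta$, hence every $\beta$-coefficient of $S(\lambda)$ --- in particular the first nonzero coefficient below the top, namely $S((s_{\beta}\lambda)^{\beta})$ sitting at codegree $\langle\beta^*,\lambda\rangle$. Since $V$ is monic along $\beta$, the element $U_{\beta,uw_{\beta\alpha}}$ is, up to a unit, $S(\lambda^{\beta})/S(d(\lambda)\rho^{\beta})$, and it is a genuine non-unit because $U$, being monic along $\alpha$, cannot also be monic along $\beta$ (Corollary \ref{cor:nonmonicsym}). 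The contradiction then comes from Proposition \ref{prop:coprime}: $S(\lambda^{\beta})/S(d(\lambda)\rho^{\beta})$ and $S((s_{\beta}\lambda)^{\beta})/S(d(\lambda)\rho^{\beta})$ are coprime, and by the separability of $S(\lambda^{\beta})$ the factor $U_{\beta,uw_{\beta\alpha}}$ is also coprime to $S(d(\lambda)\rho^{\beta})$, so it cannot divide $S((s_{\beta}\lambda)^{\beta})$. This coprimality of the two leading nonzero $\beta$-coefficients of $S(\lambda)$ is the missing ingredient; without it the support bound cannot be converted into the estimate $2uw_{\beta\alpha}+1\geq\langle\beta^*,\lambda\rangle$.
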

\begin{proof}
 We have the cofactor expansion of $U$ along $\beta$:
\begin{equation}\label{eqn:Ucofexp}
U=\sum_{j=0}^{2uw_{\beta\alpha}}e^{(uw_{\beta\alpha}-j)l_{\beta}}U_{\beta,
  uw_{\beta\alpha}-j}.
\end{equation} Here we have used the symmetry of $U$, the fact that
$-1$ belongs to the Weyl group, to obtain that
the degree along $\beta$ of the weights in $U$ varies from
$uw_{\beta\alpha}$ to $-uw_{\beta\alpha}$, since the weight with
maximum degree along $\beta$ is given by $u\omega_{\beta}$. From the
proof of the Eisenstein criterion, we get that
$U_{\beta, uw_{\beta\alpha}}$ divides the terms
\[ U_{\beta,uw_{\beta\alpha}}, \cdots, U_{\beta, uw_{\beta\alpha}-<\beta^*,
\lambda>+1}.\]
Consequently, if $2uw_{\beta\alpha}+1$ is less
than $<\beta^*, \lambda>$, then  $U_{\beta,uw_{\beta\alpha}}$
divides all the coefficients of $U$ along $\beta$, and hence all the
coefficients of $S(\lambda)$ along $\beta$. Since
$U_{\beta,uw_{\beta\alpha}}$ is not a unit, this contradicts the
corpimality of $S(\lambda^{\beta})$ and
$S((s_{\beta}\lambda)^{\beta})$. Hence the lemma follows.
\end{proof}

We can prove a sharper estimate,  assuming that $-1$ belongs to the
Weyl group of $\cR_{\alpha}$:
\begin{lemma}\label{lem:monic-sharp-est} With notation as above,
assume that the corner root $\alpha$ is such that the  automorphism
$x\mapsto -x$ is an element of the Weyl group of the root system
$\cR_{\alpha}$.  Then
\[ vw_{\alpha\beta}~\geq~ <\alpha^*, \lambda>.\]
\end{lemma}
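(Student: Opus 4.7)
The plan is to follow the template of Lemma~\ref{lem:monic-trivial-est}, applied now to the factor $V$ along the corner root $\alpha$, but to strengthen the estimate by a factor of two through a two-sided Eisenstein argument made available by the hypothesis $-1\in W_\alpha$. First I would pin down the maximum $\alpha$-degree of $V$. Since $U$ is monic along $\alpha$ with unique maximal weight $u\omega_\alpha$, it has maximum $\alpha$-degree $uw_{\alpha\alpha}$; comparing the maximum $\alpha$-degrees on the two sides of $UV = S(\lambda)/S(d(\lambda)\rho)$ and using $\lambda = u\omega_\alpha + v\omega_\beta + d(\lambda)\rho$ yields the value $v_+ := vw_{\alpha\beta}$, and the cofactor expansion becomes
\[ V \;=\; \sum_{j=0}^{2v_+} e^{(v_+-j)l_\alpha}\,V_{\alpha,v_+-j}, \qquad V_{\alpha,k}\in\C[P_\alpha]^{W_\alpha}, \]
the index range being symmetric about $0$ because $-1\in W$. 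By Corollary~\ref{cor:nonmonicsym}, $V$ cannot also be monic along $\alpha$, so the leading cofactor coefficient $V_{\alpha,v_+}$ is not a unit in $\C[P_\alpha]$.

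Next I would establish the palindromic identity $V_{\alpha,-v_+} = V_{\alpha,v_+}$ in $\C[P_\alpha]$. Writing $\tau$ for the involution $e^{\pi}\mapsto e^{-\pi}$ (whether on $\C[P]$ or on $\C[P_\alpha]$), the $W$-invariance of $V$ combined with $-1\in W$ forces $\tau V = V$, which in cofactor form translates to $V_{\alpha,k} = \tau(V_{\alpha,-k})$ for every $k$. On the other hand, the $W_\alpha$-invariance of each slice together with $-1\in W_\alpha$ makes $\tau$ act trivially on $V_{\alpha,v_+}$. Combining these two facts gives the identity.

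The third step applies the Eisenstein criterion (Lemma~\ref{lem:eisencrit}) at both ends of the $\alpha$-cofactor expansion. By Lemma~\ref{lem:cofexp} together with $-1\in W$, the expansion of $S(\lambda)$ along $\alpha$ has a block of $\langle\alpha^*,\lambda\rangle-1$ zero cofactor coefficients immediately after the top term and, symmetrically, immediately before the bottom term. Eisenstein therefore yields that $V_{\alpha,v_+}$ divides $V_{\alpha,v_+-j}$ for $0\le j<\langle\alpha^*,\lambda\rangle$, and, via the palindromic identity applied to the symmetric argument at the bottom, that $V_{\alpha,v_+}$ also divides $V_{\alpha,-v_++j}$ for $0\le j<\langle\alpha^*,\lambda\rangle$. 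If $v_+<\langle\alpha^*,\lambda\rangle$, the two progressions of indices together cover all of $[-v_+,v_+]$, so $V_{\alpha,v_+}$ divides every cofactor coefficient of $V$, hence divides $V$, and hence divides both $S(\lambda^\alpha)$ and $S((s_\alpha\lambda)^\alpha)$ in $\C[P_\alpha]$. Since $V_{\alpha,v_+}$ is not a unit, this contradicts the coprimality of $S(\lambda^\alpha)$ and $S((s_\alpha\lambda)^\alpha)$ supplied by Proposition~\ref{prop:coprime}. The conclusion $vw_{\alpha\beta}=v_+\ge\langle\alpha^*,\lambda\rangle$ follows.

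The main obstacle is the palindromic identity of the second step. Without $-1\in W_\alpha$ one only knows $\tau(V_{\alpha,v_+})=V_{\alpha,-v_+}$, so the two Eisenstein ranges furnish two distinct divisors of $V$; the merged argument collapses and one recovers at best the trivial estimate of Lemma~\ref{lem:monic-trivial-est}. It is precisely the role of the hypothesis $-1\in W_\alpha$ to force these two divisors to coincide, which is what halves the inequality.
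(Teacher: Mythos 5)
Your proposal follows the paper's proof step for step: the cofactor expansion of $V$ along $\alpha$ with index range symmetric about $0$ because $-1\in W$, the palindromic identity $V_{\alpha,-vw_{\alpha\beta}}=V_{\alpha,vw_{\alpha\beta}}$ extracted from the hypothesis $-1\in W_{\alpha}$, and the two-sided Eisenstein argument whose two index ranges cover all of $[-vw_{\alpha\beta},vw_{\alpha\beta}]$ as soon as $vw_{\alpha\beta}<\langle\alpha^*,\lambda\rangle$. Your route to the palindromic identity (each cofactor slice of the $W$-invariant $V$ is $W_{\alpha}$-invariant, and $-1\in W_{\alpha}$ forces the involution $e^{\pi}\mapsto e^{-\pi}$ to act trivially on such slices) is a harmless variant of the paper's, which instead first identifies the leading slice with $S(\lambda^{\alpha})/S(d(\lambda)\rho^{\alpha})$ and uses the invariance of that particular element.

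The one genuine defect is in your concluding step. Proposition \ref{prop:coprime} does not assert that $S(\lambda^{\alpha})$ and $S((s_{\alpha}\lambda)^{\alpha})$ are coprime, and in fact they are not: both are regular dominant Schur--Weyl sums for $\cR_{\alpha}$, hence both are divisible by the non-unit $S(d(\lambda)\rho^{\alpha})$ by the universal divisibility of the Weyl denominator. What the proposition gives is coprimality of the quotients $S(\lambda^{\alpha})/S(d(\lambda)\rho^{\alpha})$ and $S((s_{\alpha}\lambda)^{\alpha})/S(d(\lambda)\rho^{\alpha})$. So knowing only that the non-unit $V_{\alpha,vw_{\alpha\beta}}$ divides both Schur--Weyl sums produces no contradiction at all --- $S(d(\lambda)\rho^{\alpha})$ is itself a non-unit dividing both. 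The missing ingredient, which is precisely what the paper's proof carries and which you lost when you rerouted the palindrome step, is the identification $V_{\alpha,vw_{\alpha\beta}}=S(\lambda^{\alpha})/S(d(\lambda)\rho^{\alpha})$ up to a unit: compare leading coefficients along $\alpha$ in $S(\lambda)=UVS(d(\lambda)\rho)$, using that $U$ is monic along $\alpha$ and that the leading coefficient of $S(d(\lambda)\rho)$ along $\alpha$ is $S(d(\lambda)\rho^{\alpha})$. With that identification the divisor you produce is exactly the first of the two elements to which Proposition \ref{prop:coprime} applies, and its divisibility of $S((s_{\alpha}\lambda)^{\alpha})$ then contradicts that coprimality, as in the paper. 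Insert that one leading-coefficient comparison and phrase the coprimality for the quotients; the rest of your argument is sound and coincides with the paper's.
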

\begin{proof} The  hypothesis that $-1$ belongs to the Weyl group of
$\cR_{\alpha}$, implies that the leading coefficient
$S(\lambda^{\alpha})$ of $S(\lambda)$ along $\alpha$ is mapped to
itself by the inverse map $p\mapsto -p$ of $\cR_{\alpha}$.  We have the
cofactor expansion of $V$ along $\alpha$:
 \begin{equation}\label{eqn:Vcofexp}
V=\sum_{j=0}^{2vw_{\alpha\beta}}e^{vw_{\alpha\beta}-jl_{\alpha}}V_{\alpha,
  vw_{\alpha\beta}-j}.
\end{equation}
Since $V$ is invariant by $W(\cR)$ and $-1\in W(\cR)$, the
term $e^{jl_{\alpha}}V_{\alpha,j}$ goes to the term
$e^{-jl_{\alpha}}V_{\alpha, -j}$ by the map $p\mapsto -p$ on
$P$. Since $U$ is monic along $\alpha$, the top degree term
$V_{\alpha, vw_{\alpha\beta}}$ is equal to
$S(\lambda^{\alpha})/S(d(\lambda)\rho^{\alpha})$. By our hypothesis
that $-1$ belongs to the Weyl group of $\cR_{\alpha}$, we find that
$V_{\alpha, vw_{\alpha\beta}}=V_{\alpha, -vw_{\alpha\beta}}$.  From
the proof of Eisenstein's criterion, we have that
$V_{\alpha, vw_{\alpha\beta}}$ divides
\[V_{\alpha, vw_{\alpha\beta}}, \cdots,
V_{\alpha, vw_{\alpha\beta}-<\alpha^*, \lambda>+1}.\]
Similarly, since $S(\lambda)$ is symmetric ($-1\in W$), we find that
$V_{\alpha, vw_{\alpha\beta}}$ divides the coefficients
\[V_{\alpha, -vw_{\alpha\beta}}, \cdots,
V_{\alpha, -vw_{\alpha\beta}+<\alpha^*, \lambda>-1}.\]
If $vw_{\alpha\beta}$ is less that $<\alpha^*, \lambda>$, this implies
that $V_{\alpha, vw_{\alpha\beta}}$ divides all the coefficients of
$V$ and hence of $S(\lambda)$ in the cofactor expansion of
$S(\lambda)$ along $\alpha$. But this contradicts the corpimality of
$S(\lambda^{\alpha})S(d(\lambda)\rho^{\alpha}$ and $
S((s_{\alpha}\lambda)^{\alpha})/S(d(\lambda)\rho^{\alpha})$. This
establishes the lemma.
\end{proof}

We compute the numbers $w_{\alpha\beta}$ explicitly:
\begin{lemma}\label{lem:walphabeta}
Let $\cR=(E, \Phi, \Delta)$ be a
non-simply laced simple based root system. Let $\alpha, ~\beta$ be
corner roots in the Dynkin diagram associated to $E$. Then the
following holds:

\begin{enumerate}
 \item $w_{\alpha\beta}>0$.

\item Let $\cR$ be of type $B$ or $C$. Then
\begin{equation}\label{wBC}
w_{\alpha\beta}w_{\beta\alpha}=\frac{1}{2}.
\end{equation}

\item Let $\cR=F_4$. or $G_2$.  Let $\alpha$ (resp. $\beta$) be a short
(resp. long) corner root of the Dynkin diagram associated to
$\cR$. Then,
\begin{align}\label{wf4} w_{\alpha\beta}& = m(\Phi)=\begin{cases} 2 &
    \text{if~~ $\cR=F_4$},\\
3 & \text{if~~ $\cR=G_2$}.
\end{cases}\\
 w_{\beta\alpha}& =1.
\end{align}

\end{enumerate}
\end{lemma}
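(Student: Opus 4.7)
The plan is to identify $w_{\alpha\beta}$ with an entry of the inverse Cartan matrix and then to read off the required values from standard tables. If we expand the fundamental weight $\omega_\beta = \sum_{\gamma\in\Delta} c_{\beta\gamma}\,\gamma$ in the basis of simple roots, the defining relation $\langle\omega_\alpha^*,\gamma\rangle = \delta_{\alpha\gamma}$ immediately gives $w_{\alpha\beta} = c_{\beta\alpha}$; equivalently, the matrix $(c_{\beta\gamma})$ is the transpose of the inverse of the Cartan matrix $(A_{\alpha\gamma}) = (\langle\alpha^*,\gamma\rangle)$.

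For part (1), the inequality $w_{\alpha\beta} > 0$ is a restatement of the classical fact that the inverse of the Cartan matrix of an irreducible root system has all entries strictly positive; see, e.g., \cite{H}, \S13.

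For parts (2) and (3) the task reduces to computing two specific entries of $A^{-1}$ in each case, namely the coefficient of each corner simple root in the fundamental weight attached to the opposite corner. For $B_r$ and $C_r$, which are dual so a single computation suffices, one reads off from Bourbaki's Plates II and III that the two coefficients are $1$ and $1/2$, so their product is $1/2$. For $F_4$, Plate VIII expresses the two extremal fundamental weights in the basis of simple roots; the coefficient of the long corner simple root in the short corner fundamental weight is $2 = m(\Phi)$, while the coefficient of the short corner simple root in the long corner fundamental weight is $1$. For $G_2$ the Cartan matrix is $2 \times 2$ of determinant one, so an immediate inversion yields the expressions of both fundamental weights and the analogous coefficients work out to $3 = m(\Phi)$ and $1$.

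There is no real obstacle to the proof; the only care needed is to match the identification of short versus long corner root with Bourbaki's numbering. As a consistency check, the excerpt's identification $\omega_\alpha^* = \omega_\alpha$ for $\alpha$ short and $\omega_\alpha^* = \omega_\alpha/m(\Phi)$ for $\alpha$ long, combined with the symmetry of the $W$-invariant form on $E$, forces $w_{\alpha\beta}/w_{\beta\alpha} = m(\Phi)$ whenever $\alpha$ is short and $\beta$ is long, which is consistent with each of the computations above.
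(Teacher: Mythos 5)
Your proof is correct, and it reaches the same numerical values as the paper by a slightly different bookkeeping. The paper's proof is a bare-hands computation: for each of $C_r$ (hence $B_r$ by the symmetry of the product), $F_4$ and $G_2$ it writes down the standard Euclidean realization of the simple roots from \cite[Section 12.1]{H}, computes the fundamental weights and coweights explicitly in the $\epsilon_i$-coordinates, and evaluates the pairings $w_{\alpha\beta}=<\omega_\alpha^*,\omega_\beta>$ directly; positivity in part (1) then just falls out of these explicit values. You instead observe that, by the defining property $<\omega_\alpha^*,\gamma>=\delta_{\alpha\gamma}$ for $\gamma\in\Delta$, the number $w_{\alpha\beta}$ is the coefficient of the simple root $\alpha$ in the expansion of $\omega_\beta$ in the simple-root basis, i.e.\ an entry of the (transposed) inverse Cartan matrix; parts (2) and (3) then become a table lookup (Bourbaki's plates, or a two-by-two inversion for $G_2$), and part (1) follows from the classical fact that the inverse Cartan matrix of an irreducible root system has strictly positive entries. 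The identification $CA^T=I$ you use is right, your read-off values (coefficient $1$ and $1/2$ for $B_r/C_r$, giving product $1/2$; coefficients $2,1$ for $F_4$; $3,1$ for $G_2$) match the paper's, and your closing consistency check via $\omega_\alpha^*=\omega_\alpha$ (short) versus $\omega_\alpha^*=\omega_\alpha/m(\Phi)$ (long) is a valid sanity check. What your route buys is a uniform, citation-based proof of part (1) and less coordinate computation; what the paper's route buys is self-containedness, since it never needs the positivity of the inverse Cartan matrix as an external input. The only point to watch, which you flag yourself, is matching the short/long labelling of the corner roots with Bourbaki's numbering for $F_4$ and $G_2$; with the standard numbering ($\alpha_1$ long corner, $\alpha_4$ short corner for $F_4$; $\alpha_1$ short, $\alpha_2$ long for $G_2$) your values are the correct ones.
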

\begin{proof}
The proof is explicit and case by case. We use the classification of
the root systems as given in \cite[Section 12.1]{H}. For the root
systems of type $C_r$, take as a base
\[ \alpha=\epsilon_1-\epsilon_2, \cdots, \epsilon_{r-1}-\epsilon_r, ~
2\epsilon_r=\beta.\]
The fundamental weights and coweights corresponding to the corner
roots are given by,
\[
\omega_{\alpha}^*=\epsilon_1, \quad \text{and} \quad
\omega_{\beta}^*=(\sum_{i=1}^r\epsilon_i)/2.\]
Further $\omega_{\alpha}=\omega_{\alpha}^*$ and
$\omega_{\beta}=2\omega_{\beta}^*$. Thus,
\[\begin{split}
 w_{\alpha\beta}w_{\beta\alpha} &=
<\omega_{\alpha}^*,\omega_{\beta}><\omega_{\beta}^*,\omega_{\alpha}>\\
&= \frac{1}{2}.
\end{split}
\]
For $F_4$, we take as a base,
\[\beta=\epsilon_2-\epsilon_3, ~\epsilon_3-\epsilon_4, ~\epsilon_4,
~(\epsilon_1-\epsilon_2 -\epsilon_3-\epsilon_4)/2=\alpha.\]
Then the fundamental coweights are given by,
\[
\omega_{\alpha}^*=2\epsilon_1, \quad \text{and} \quad
\omega_{\beta}^*=\epsilon_1+\epsilon_2.\]
The fundamental weights are given by
$\omega_{\alpha}=\omega_{\alpha}^*/2$ and
$\omega_{\beta}=\omega_{\beta}^*$. Hence,
\[\begin{split}
w_{\alpha\beta}& =<\omega_{\alpha}^*,\omega_{\beta}>=2,\\
w_{\beta\alpha}&=<\omega_{\beta}^*,\omega_{\alpha}>=1.
\end{split}
\]
For $G_2$, a base is given by,
 \[\alpha=\epsilon_1-\epsilon_2, ~\beta=-2\epsilon_1+\epsilon_2
 +\epsilon_3.\]
The fundamental coweights are given by,
\[
\omega_{\alpha}^*=-\epsilon_2+\epsilon_3, \quad \text{and} \quad
\omega_{\beta}^*=\frac{1}{3}(-\epsilon_1-\epsilon_2+2\epsilon_3).\]
The fundamental weights are given by
$\omega_{\alpha}=\omega_{\alpha}^*$ and
$\omega_{\beta}=3\omega_{\beta}^*$. Hence,
\[
\begin{split}
w_{\alpha\beta}& =<\omega_{\alpha}^*,\omega_{\beta}>=3,\\
w_{\beta\alpha}&=<\omega_{\beta}^*,\omega_{\alpha}>=1.
\end{split}
\]
This proves the lemma.
\end{proof}

\subsubsection{Proof of Proposition \ref{prop:nonmonic} for B and C}
We now prove Proposition \ref{prop:nonmonic} for the simple root
systems $\cR$ of type $B_r$ and $C_r$. We stick to the above
notation. We choose $\alpha$ as in Part (3) of  Lemma
\ref{lem:minus1}, and let $\beta$ be the other corner root of the
Dynkin diagram attached to $\cR$. Write
$\lambda=u\omega_{\alpha}+v\omega_{\beta}+d(\lambda)\rho$ as given by
Corollary \ref{cor:moniclambda}. By Lemmas \ref{lem:monic-trivial-est} and
\ref{lem:monic-sharp-est} we obtain the inequalities,
\begin{align} 2uw_{\beta\alpha}+1 & ~\geq ~ <\beta^*,
\lambda>~=~v+d(\lambda)\\
vw_{\alpha\beta}& ~\geq ~<\alpha^*, \lambda>
~=~u+d(\lambda).
\end{align} Consequently,
\[\begin{split} 2uw_{\beta\alpha}w_{\alpha\beta}& ~\geq~
(v+d(\lambda)-1)w_{\alpha\beta}\\ &~ \geq~
u+d(\lambda)+w_{\alpha\beta}(d(\lambda)-1).
\end{split}
\]
By Part (2) of Lemma
\ref{lem:walphabeta}, we have
$2w_{\beta\alpha}w_{\alpha\beta}=1$. Hence,
\[ 0~\geq~ d(\lambda)+w_{\alpha\beta}(d(\lambda)-1).\] But this
contradicts the positivity of $d(\lambda)$ and
$w_{\alpha\beta}$. Hence this proves Proposition
\ref{prop:nonmonic} for the simple root systems of type $B$ or $C$.

\subsubsection{Proof for $F_4$ and $G_2$}
For these simple root systems $\cR$, by Lemma \ref{lem:minus1},
the element $-1$ belongs to the Weyl
group  of $\cR$ as well as to that of $\cR_{\alpha}$ for any corner
root $\alpha$.  Let $\alpha$ denote the short
corner root and $\beta$ the long corner root. Write
$\lambda=u\omega_{\alpha}+v\omega_{\beta}+d\rho$, and we assume the
factorization $S(\lambda)=UVS(d(\lambda)\rho)$ is such that $U$ is
monic along $\alpha$ and $V$ is monic along $\beta$. Applying
the sharper estimates given by Lemma
\ref{lem:monic-sharp-est}, and from Part (3)  of Lemma
\ref{lem:walphabeta}, we get
\begin{align}
m(\Phi)v~=~ vw_{\alpha\beta}& ~\geq~ <\alpha^*, \lambda>~=~u+d,\\
u~=~ uw_{\beta\alpha}&~\geq~ <\beta^*, \lambda>~=~v+d.
\end{align}
If $\lambda$ as above does not satisfy the above equations, then any
invariant factorization is non-monic. This proves Proposition
\ref{prop:nonmonic}.

\begin{remark} Unfortunately, this proof develops a gap out here. If
  we try to look for cofactor expansions (say for $G_2$)
along any other linear form,
  we require that the top degree term for $S(\lambda)$ is not
  monic. This would require that the linear form is invariant under a
  subgroup of the Weyl group as above, and thus upto a Weyl translate
  is either $\omega_{\alpha}^*$ or $\omega_{\beta}^*$. Hence we cannot
  do better.
\end{remark}

\section{Proof of the uniqueness property} \label{sec:unique}
In this section, we give a proof of Theorem \ref{thm:unique} and
deduce some consequences. We begin with a couple of preliminary
lemmas.
\begin{lemma} \label{lem:leadcoeffs}
Suppose $\lambda, ~\mu$ are weights associated to a simple based root
system $\cR= (E, \Phi, \Delta)$, such that for two  distinct corner roots
$\alpha, ~\beta$ in the Dynkin diagram of $\cR$,
\[\lambda^{\alpha}=\mu^{\alpha}, \quad  \text{and} \quad \lambda^{\beta}=\mu^{\beta}.\]
Then $\lambda=\mu$.
\end{lemma}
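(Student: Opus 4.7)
The plan is to use the direct-sum decomposition established in Lemma \ref{lem:decomp} to translate each hypothesis into a linear condition on $\lambda-\mu$, and then invoke the linear independence of distinct fundamental weights.

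First, I would recall that by Lemma \ref{lem:decomp}, for any fundamental root $\gamma\in\Delta$ one has a $W_\gamma$-equivariant decomposition
\[ E = \R\omega_\gamma \oplus E_\gamma, \]
with $\pi^\gamma$ denoting the component of a weight $\pi$ in $E_\gamma$ under the projection along $\omega_\gamma$ (see Equation (\ref{eqn:cof-dec-weight})). Consequently, if $\lambda^\gamma = \mu^\gamma$, then $\lambda - \mu$ lies in the kernel of the projection $E\to E_\gamma$, namely the line $\R\omega_\gamma$.

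Applying this with $\gamma = \alpha$ and $\gamma = \beta$ in turn yields
\[ \lambda - \mu \in \R\omega_\alpha \cap \R\omega_\beta. \]
Since $\alpha$ and $\beta$ are distinct fundamental roots, the fundamental weights $\omega_\alpha$ and $\omega_\beta$ are distinct members of the basis of $P\otimes\R$ dual to $\{\beta^*:\beta\in\Delta\}$; in particular they are linearly independent, so the intersection above is $\{0\}$. Therefore $\lambda = \mu$.

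There is no real obstacle here — the statement is essentially a linear-algebra consequence of the decomposition of $E$ attached to each corner root and the fact that distinct fundamental weights are linearly independent. (Note that one does not even need $\alpha$ and $\beta$ to be corner roots; any pair of distinct simple roots would suffice for the lemma as stated.)
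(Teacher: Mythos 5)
Your proof is correct and is essentially the paper's argument in dual form: the paper notes that $\lambda^{\alpha}=\mu^{\alpha}$ forces $m_{\gamma}(\lambda)=m_{\gamma}(\mu)$ for all $\gamma\neq\alpha$ and then combines the two corner roots, which is exactly your observation that the projection along $\omega_{\alpha}$ has kernel $\R\omega_{\alpha}$ and that $\R\omega_{\alpha}\cap\R\omega_{\beta}=\{0\}$. Your side remark that corner roots are not needed is also consistent with Lemma \ref{lem:decomp}, which holds for any fundamental root.
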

\begin{proof}
The hypothesis for a particular corner root $\alpha$
implies an equality of the multiplicities $m_{\gamma}(\lambda)=m_{\gamma}(\mu)$
for all $\gamma\in \Delta$ not equal to $\alpha$. Since this happens  at
two corner roots, the lemma follows.
\end{proof}

We now give the proof of Theorem \ref{thm:unique}. We first
reformulate the hypothesis. By clearing the denominators, the
hypothesis can be reformulated as an equality of products of
Schur-Weyl elements, 
\begin{equation}\label{eqn:prodschurweyl}
 S(\lambda_1)S(\mu_2)=S(\lambda_2)S(\mu_1).
\end{equation}
We want to show that $\lambda_1=\lambda_2$ or $\mu_1$. 
Assuming that $\lambda_1\neq \mu_1$ (or equivalently, $\lambda_2\neq
\mu_2$),  this will show that
$\lambda_1=\lambda_2$ and $\mu_1=\mu_2$. 
The proof is by induction on the rank of the root system. Assume first
that $\Phi$ is of rank one isomorphic to the root system associated to
$sl(2)$.  The hypothesis indicates,
\[ (x^a-x^{-a})(x^e-x^{-e})~=~(x^b-x^{-b})(x^f-x^{-f}), \]
for some positive integers $a, ~b, ~e, ~f$, and $a\neq f$.
 The proposition follows immediately by comparing
the roots on both sides.

Now assume that $\cR$ is of rank $r$, and  the theorem has
been proved for all simple root systems of rank less than $r$. By
Lemma \ref{lem:leadcoeffs} and the inductive hypothesis, 
we can conclude the following: 

(i) Since
there are three corner roots for the simple root systems of type $D$
and $E$, the theorem follows for them. 

(ii) We can assume that there is a corner
root, say $\alpha$ at which
\begin{equation}\label{eqn:alpha}
 \lambda_1^{\alpha}=\lambda_2^{\alpha}\quad \text{and} \quad
\mu_1^{\alpha}=\mu_2^{\alpha},
\end{equation}
and another corner root $\beta$ where
\begin{equation}\label{eqn:beta}
 \lambda_1^{\beta}=\mu_1^{\beta}\quad \text{and} \quad
\lambda_2^{\beta}=\mu_2^{\beta}.
\end{equation}
Suppose $m_{\alpha}(\lambda_1)<m_{\alpha}(\lambda_2)$. 
We have, 
\begin{equation}
m_{\alpha}(\lambda_1)~=~m_{\alpha}(\mu_1)~<~m_{\alpha}(\lambda_2)~=~m_{\alpha}(\mu_2).
\end{equation}
Comparing the coefficients of the second leading term on both sides in
the cofactor expansion along $\alpha$, 
we get
\[ S((s_{\alpha}\lambda_1)^{\alpha})S(\mu_2^{\alpha})~=~
S(\lambda_2^{\alpha})S((s_{\alpha}\mu_1)^{\alpha}).\]
By induction, if $\lambda_2^{\alpha}=\mu_2^{\alpha}$, coupled with Equation
\ref{eqn:beta}, this implies $\lambda_2=\mu_2$ contradicting our
hypothesis that they are not equal. The other equality gives, 
\[
(s_{\alpha}\lambda_1)^{\alpha}=\lambda_2^{\alpha}=\lambda_1^{\alpha}.\]
But since $\lambda_1$ is regular, 
\[\lambda_1^{\alpha}=(s_{\alpha}\lambda_1)^{\alpha}=\lambda_1^{\alpha}+c\omega_{\alpha_n}^{\alpha},\]
for some positive integer $c$, where $\alpha_n$ is the unique root in the Dynkin diagram connected to
$\alpha$. This yields a contradiction and proves Theorem
\ref{thm:unique}.

\subsection{A coprimality property}

\begin{lemma}\label{lem:coprime}
Let $X$ be a finitely generated free abelian group and $a_1, ~a_2\in
X$. Then $e^{a_1}-1$ and $e^{a_2}-1$ are coprime elements in the group algebra
$\C[X]$, unless there exists integers $k, ~l$ different from zero such
that $ka_1=la_2$.
\end{lemma}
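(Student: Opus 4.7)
The ring $\C[X]$ is a Laurent polynomial ring, and so a UFD whose units are precisely the elements of $\C^* \cdot \{e^x : x \in X\}$. For the easy direction, suppose $k a_1 = l a_2$ with both $k, l \neq 0$ (and $a_1, a_2 \neq 0$, else the claim is vacuous). Then $a_1$ and $a_2$ lie in a rank-one subgroup of $X$: taking $c$ to be a primitive generator of the saturation of $\Z a_1 + \Z a_2$ in $X$, write $a_1 = r_1 c$, $a_2 = r_2 c$ with $r_i \neq 0$, and let $d = \gcd(r_1, r_2)$. The elementary identity $x^m - 1 = (x^d - 1)(x^{m-d} + x^{m-2d} + \cdots + 1)$ valid whenever $d \mid m$, applied with $x = e^c$, shows that $e^{dc} - 1$ divides both $e^{a_1} - 1$ and $e^{a_2} - 1$ in $\C[X]$. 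Since $dc \neq 0$, $e^{dc} - 1$ is a non-unit, so the two elements are not coprime.

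For the converse, assume $a_1, a_2$ are $\Q$-linearly independent in $X \otimes \Q$. The first step is a basis change triangularizing $(a_1, a_2)$: let $y_1 \in X$ be a primitive vector on the line $\Q a_1$, and extend to a basis $y_1, y_2, \ldots, y_r$ of $X$ chosen so that the image of $a_2$ in $X/\Z y_1$ is a positive multiple of the class of $y_2$. This yields
\[
a_1 = d_1 y_1, \qquad a_2 = \beta y_1 + \gamma y_2,
\]
for some integer $\beta$ and positive integers $d_1, \gamma$, the key point being that $\gamma \neq 0$ precisely by virtue of the $\Q$-linear independence hypothesis. Writing $Y_i := e^{y_i}$, the claim reduces to showing that
\[
f := Y_1^{d_1} - 1 \quad \text{and} \quad g := Y_1^{\beta} Y_2^{\gamma} - 1
\]
are coprime in $\C[Y_1^{\pm 1}, \ldots, Y_r^{\pm 1}]$.

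Since both $f$ and $g$ lie in $\C[Y_1^{\pm 1}, Y_2^{\pm 1}]$, a short Newton-polytope argument (the Newton polytope of a product being the Minkowski sum of the factors' polytopes) shows that, up to units in the variables $Y_3,\ldots,Y_r$, any common irreducible factor of $f, g$ in the full Laurent ring already lies in $\C[Y_1^{\pm 1}, Y_2^{\pm 1}]$, so it suffices to verify coprimality there. Now view $\C[Y_1^{\pm 1}, Y_2^{\pm 1}]$ as $R[Y_2^{\pm 1}]$ with $R = \C[Y_1^{\pm 1}]$. The element $f$ lies in $R$, so any common irreducible factor of $f$ and $g$ in $R[Y_2^{\pm 1}]$ must itself lie in $R$; but such an element must divide every $Y_2$-coefficient of $g = -1 + Y_1^{\beta} Y_2^{\gamma}$, in particular the constant coefficient $-1$, so it is a unit. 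Hence $f$ and $g$ are coprime. The only slightly delicate step is the basis change, where one must exploit the rational independence of $a_1, a_2$ exactly to force $\gamma \neq 0$; everything else is bookkeeping in a UFD.
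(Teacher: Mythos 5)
Your argument is correct, but it takes a different route from the paper's. The paper extends $a_1,a_2$ (assumed $\Q$-linearly independent) to a basis of $X\otimes\Q$ and works in the group algebra $\C[X_\Q]$ of the divisible hull, where $e^{a_1}-1$ and $e^{a_2}-1$ literally become $x_1-1$ and $x_2-1$; coprimality is then immediate from the additivity under multiplication of the ``width'' (maximal minus minimal exponent) along each variable, which forces any common divisor to be a monomial. (Implicit there is the harmless transfer back to $\C[X]$: units of $\C[X_\Q]$ are monomials, so a common non-unit factor in $\C[X]$ stays a non-unit upstairs.) You instead stay inside $\C[X]$, triangularizing integrally -- $a_1=d_1y_1$, $a_2=\beta y_1+\gamma y_2$ with $\gamma\neq 0$ via a primitive vector and a basis of $X/\Z y_1$ -- and then run a Newton-polytope/coefficient argument: a divisor of $Y_1^{d_1}-1$ is, up to a monomial, a Laurent polynomial in $Y_1$ alone, and such an element dividing $Y_1^{\beta}Y_2^{\gamma}-1$ must divide its constant $Y_2$-coefficient $-1$, hence is a unit. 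This buys you an argument entirely at the integral level (no fractional exponents), and you also supply the easy converse (non-coprimality when $ka_1=la_2$), which the paper does not state or need; the cost is the basis-change bookkeeping, which the paper's passage to $\C[X_\Q]$ avoids. The small steps you leave implicit (that a Minkowski summand of a segment is a parallel segment or a point, that a divisor lying in the subring $\C[Y_1^{\pm1},Y_2^{\pm1}]$ of an element of that subring divides it there, and the unit adjustment when $r_1$ or $r_2$ is negative in the easy direction) are all routine and correct.
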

\begin{proof}
Consider the subspace generated by $a_1, ~a_2$ in the rational vector
space $X_{\Q}=X\otimes \Q$. Suppose they generate a two dimensional vector
subspace. Expand $a_1, ~a_2$ to a basis $\{a_1, a_2, \cdots, a_n\}$ of
$X_{\Q}$. Writing $x_i=e^{a_i}, ~i=1, \cdots, n$, we can identify
$\C[X_{\Q}]$  with the ring of fractional Laurent series in the variables
$x_1, \cdots, x_n$.
We claim that the elements   $x_1-1$ and $x_2-1$ are
coprime  in $\C[X_{\Q}]$. This follows from considering the degrees of
elements along each variable $x_i$, where the degree of an element
$U\in \C[X_{\Q}]$ along $x_i$ is defined as the
difference between the maximum  and minimum degrees in $x_i$ of the
various monomials occurring in $U$. It is clear that the degree of
$UV$ is the sum of the degrees of $U$ and $V$. From this it follows
that any element dividing both $x_1-1$ and $x_2-1$ has to be a
monomial.
\end{proof}

\begin{corollary} \label{cor:coprimeroots}
Let $\cR=(E, \Phi, \Delta)$ be a simple based root system, and
$\alpha\neq \pm\beta$ be two roots in $\Phi$. Then for any non-zero integers
$k, ~l$, the elements $e^{k\alpha}-1$ and $e^{l\beta}-1$ are coprime
in the group algebra $\C[P]$.

In particular, the element $S(d\rho)$ is a separable element in the
ring $\C[P]$ (see Part (c) of Proposition
\ref{prop:weyldenominator}).
\end{corollary}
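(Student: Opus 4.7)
The plan is to derive the coprimality of $e^{k\alpha}-1$ and $e^{l\beta}-1$ directly from Lemma~\ref{lem:coprime}, applied with $X = P$, $a_1 = k\alpha$ and $a_2 = l\beta$. The only obstruction to invoking that lemma is the existence of nonzero integers $m, n$ with $m \cdot k\alpha = n \cdot l\beta$, i.e.\ $\Q$-proportionality of $\alpha$ and $\beta$ in $E$. The key structural fact I would use is that the root system of a complex simple Lie algebra is reduced, so the only roots proportional to a given root $\alpha$ are $\pm\alpha$. Combined with the hypothesis $\alpha \neq \pm\beta$, this forces $\alpha$ and $\beta$ to be $\Q$-linearly independent; since $k, l \neq 0$, no such $m, n$ can exist, and Lemma~\ref{lem:coprime} delivers the coprimality.

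For the ``in particular'' statement, I would start from the product expansion of Proposition~\ref{prop:weyldenominator}(a), which in $\Z[P]$ reads
\[ S(d\rho) = e^{-d\rho}\prod_{\alpha \in \Phi^+}(e^{d\alpha}-1). \]
Since $\rho \in P$, the prefactor $e^{-d\rho}$ is a unit in $\C[P]$, so it suffices to show that the right-hand product is squarefree. Each factor is squarefree on its own: writing $\alpha$ as a positive integer multiple of a primitive element $\gamma_\alpha$ of $P$ and extending $\gamma_\alpha$ to a $\Z$-basis of $P$ identifies $\C[P]$ with a Laurent polynomial ring $\C[y^{\pm 1}, z_1^{\pm 1}, \ldots, z_{r-1}^{\pm 1}]$ in which $e^{d\alpha}-1$ becomes $y^N - 1 = \prod_{\omega^N = 1}(y - \omega)$ for some positive integer $N$, a product of $N$ distinct irreducible linear factors. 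For two distinct positive roots $\alpha \neq \beta$ we have $\alpha \neq \pm\beta$, so the first part of the corollary, applied with $k = l = d$, gives that $e^{d\alpha}-1$ and $e^{d\beta}-1$ are coprime in $\C[P]$. Combining these two observations, no irreducible factor appears in more than one place across the product, so $S(d\rho)$ has no repeated factor and is therefore separable.

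There is no substantive obstacle: the only delicate point is promoting ``$\alpha \neq \pm\beta$'' to genuine $\Q$-linear independence before Lemma~\ref{lem:coprime} can be applied, and this is a standard feature of the reduced crystallographic root systems attached to simple complex Lie algebras. Everything else is direct bookkeeping with the factorization already established in Proposition~\ref{prop:weyldenominator}(a).
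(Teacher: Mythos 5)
Your argument is correct and matches the paper's own proof: the coprimality is obtained exactly as in the paper by applying Lemma~\ref{lem:coprime} together with the fact that in a reduced root system only $\pm\alpha$ is proportional to $\alpha$, and the separability of $S(d\rho)$ is the intended consequence of the product formula in Proposition~\ref{prop:weyldenominator}(a) combined with pairwise coprimality and squarefreeness of each factor $e^{d\alpha}-1$. Your write-up merely spells out these last bookkeeping steps more explicitly than the paper does; no new idea or gap is involved.
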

\begin{proof} This follows from the previous lemma and the fact that
  if $\alpha$ and $\beta$ are rational multiples of each other in
  $P\otimes \Q$ precisely when $\alpha=\pm\beta$.
\end{proof}

In the following corollary, if the root system is simply laced, the notation
$\tilde{\rho}$ will denote $\rho$.

\begin{corollary}\label{cor:coprimeweyl}
Suppose $e, ~f$ are natural numbers with $(e,f)=d$. Assume further
that the multiplicity with which $m(\Phi)$ divides $e$ is not greater
than the multiplicity with which it divides $f$. Then the elements
$S(e\rho)/S(d\rho)$ and $S(f\tilde{\rho})/S(d\rho)$ are coprime in the
ring $\C[P]$.
\end{corollary}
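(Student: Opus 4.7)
The plan is to exploit the explicit product factorizations of generalized Weyl denominators from Propositions \ref{prop:weyldenominator}(a) and \ref{prop:dualrhodiv}(a). Setting $m=m(\Phi)$, the hypothesis $(e,f)=d$ gives $d\mid e$ and $d\mid fm$, so both
\[ \frac{S(e\rho)}{S(d\rho)} \;=\; (\text{unit})\cdot \prod_{\alpha\in\Phi^+}\frac{e^{e\alpha}-1}{e^{d\alpha}-1} \]
and
\[ \frac{S(f\tilde\rho)}{S(d\rho)} \;=\; (\text{unit})\cdot \prod_{\alpha\in\Phi_s^+}\frac{e^{fm\alpha}-1}{e^{d\alpha}-1}\prod_{\alpha\in\Phi_l^+}\frac{e^{f\alpha}-1}{e^{d\alpha}-1} \]
lie in $\C[P]$. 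By Corollary \ref{cor:coprimeroots}, every irreducible divisor of $e^{k\alpha}-1$ is coprime to every irreducible divisor of $e^{l\beta}-1$ whenever $\alpha\neq\pm\beta$. Hence any putative common irreducible factor of the two ratios is pinned to a single positive root $\alpha$, and it suffices to verify coprimality one direction at a time.

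Fix $\alpha\in\Phi^+$, choose a primitive element $\alpha_0$ of $P$ with $\alpha=k\alpha_0$, and set $z=e^{\alpha_0}$. Both $\alpha$-contributions live in the one-variable Laurent subring $\C[z,z^{-1}]\subset\C[P]$, where over $\C$ they factor as
\[ \prod_{\zeta^{ek}=1,\ \zeta^{dk}\neq 1}(z-\zeta)\quad\text{and}\quad \prod_{\zeta^{f'k}=1,\ \zeta^{dk}\neq 1}(z-\zeta), \]
with $f'=f$ if $\alpha$ is long and $f'=fm$ if $\alpha$ is short. A common root would satisfy $\zeta^{k\gcd(e,f')}=1$ and $\zeta^{dk}\neq 1$, so coprimality in $\C[z,z^{-1}]$ reduces to the identity $\gcd(e,f')=d$. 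For long $\alpha$ this is exactly the hypothesis $(e,f)=d$. For short $\alpha$, write $e=de_1$ and $f=df_1$ with $(e_1,f_1)=1$; then $\gcd(e,fm)=d\gcd(e_1,f_1m)=d\gcd(e_1,m)$. The hypothesis that the $m$-adic valuation of $e$ is at most that of $f$ yields $v_m(d)=v_m(e)$, hence $v_m(e_1)=0$, so $\gcd(e_1,m)=1$ and $\gcd(e,fm)=d$ as required.

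To promote this coprimality from $\C[z,z^{-1}]$ up to $\C[P]$, extend $\alpha_0$ to a $\Z$-basis of $P$ and write $\C[P]\cong\C[z,z^{-1}]\otimes_{\C}\C[L]$ for the complementary sublattice $L$. A standard support argument in the $L$-variables shows that any $g\in\C[P]$ dividing an element of $\C[z,z^{-1}]$ must factor as an element of $\C[z,z^{-1}]$ times a monomial $e^\ell$ with $\ell\in L$; since such monomials are units in $\C[P]$, every potential common divisor of the two $\alpha$-factors is, up to a unit, an element of $\C[z,z^{-1}]$, where we have just ruled one out.

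The decisive step is the short-root computation $\gcd(e,fm)=d$, and this is the sole place the valuation hypothesis enters: if $v_m(e)>v_m(f)$ then $m\mid e_1$, giving $\gcd(e,fm)=dm>d$ and producing honest common cyclotomic roots (thus genuine common factors) in the $\alpha_0$-direction. Everything else — the product factorization of the Weyl denominators, the reduction to a single root via Corollary \ref{cor:coprimeroots}, and the support-based lifting back to $\C[P]$ — is routine in the framework already developed.
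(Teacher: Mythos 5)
Your proposal is correct and follows essentially the same route as the paper: factor the generalized Weyl denominators into the products over positive roots, invoke Corollary \ref{cor:coprimeroots} to reduce any putative common factor to a single root $\alpha$, and then use the hypothesis on $m(\Phi)$-adic valuations to get $(e,m(\Phi)f)=d$, which kills common roots of unity in the $\alpha$-direction. The paper leaves the single-root coprimality and the gcd identity as ``clear''; you have simply written out those steps (the cyclotomic computation in $\C[z,z^{-1}]$ and the support argument lifting coprimality back to $\C[P]$), which is a faithful elaboration rather than a different argument.
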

\begin{proof} By hypothesis,
  since $(e,f)=d$, it follows that $(e,m(\Phi)f)=d$.
By the previous corollary, we need to bother only with
  the individual factors associated to a root $\alpha$.
It is clear that the elements
  $(e^{2e\alpha}-1)/(e^{2d\alpha}-1)$ and
  $(e^{2fm(\Phi)\alpha}-1)/(e^{2d\alpha}-1)$ are coprime in
  $\C[P]$.
\end{proof}

The above corollary combined with the uniqueness Theorem
\ref{thm:unique} gives us the following:
\begin{corollary}\label{cor:coprime}
   Let $\cR=(E, \Phi, \Delta)$ be a simple based root
system, and assume that Theorem \ref{thm:main} holds for $\cR$.
Suppose $\lambda\neq \mu$ are
dominant regular weights belonging to $m(\Phi)^iP$ such that at most
one of them is in the space $m(\Phi)^{i+1}P^*$.  Let $d$ be the greatest
common divisor of $d(\lambda)$ and $d(\mu)$. Then $S(\lambda)/S(d\rho)$
and  $S(\mu)/S(d\rho)$ are coprime.
\end{corollary}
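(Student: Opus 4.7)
My plan is to factor $S(\nu)/S(d\rho) = C(\nu)\cdot E(\nu)$ for $\nu\in\{\lambda,\mu\}$, where $E(\nu) := D(\nu)/S(d\rho)$. By Lemma \ref{lem:Dlambda} together with the hypothesis that at most one of $\lambda,\mu$ lies in $m(\Phi)^{i+1}P^*$, one may assume $D(\lambda) = S(d(\lambda)\rho)$, while $D(\mu)$ is either $S(d(\mu)\rho)$ or $S(d^*(\mu)\tilde{\rho})$. By Theorem \ref{thm:main} applied to $\cR$, each $C(\nu)$ is either trivial (when $\nu$ is itself of generalized Weyl denominator type) or absolutely irreducible in $\C[P]^W$. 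The desired coprimality then splits into three pairwise checks: $C(\lambda)$ versus $C(\mu)$, $E(\lambda)$ versus $E(\mu)$, and the cross terms $C(\lambda)$ versus $E(\mu)$ (and symmetrically $C(\mu)$ versus $E(\lambda)$).

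The first two checks are the easy ones. For $C(\lambda)$ and $C(\mu)$, both being irreducible when non-trivial, any common factor would force $C(\lambda) = C(\mu)$ up to units, and Theorem \ref{thm:unique} would then yield $\lambda = \mu$, contrary to hypothesis. For $E(\lambda)$ and $E(\mu)$, in the mixed case (one of $\rho$-type, the other of $\tilde{\rho}$-type) Corollary \ref{cor:coprimeweyl} applies directly, its hypothesis on the $m(\Phi)$-multiplicity being supplied precisely by the ``at most one lies in $m(\Phi)^{i+1}P^*$'' condition. In the uniform case (both of $\rho$-type), I invoke the product formula of Proposition \ref{prop:weyldenominator} and reduce root-by-root via Corollary \ref{cor:coprimeroots} to the cyclotomic coprimality furnished by Lemma \ref{lem:cyclo}, using $(d(\lambda),d(\mu)) = d$.

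The main obstacle is the cross term: showing $C(\lambda)$ is coprime to $E(\mu)$. Suppose $C(\lambda)$ shares an irreducible factor with $E(\mu)$; irreducibility of $C(\lambda)$ then forces $C(\lambda) \mid D(\mu)$ in $\C[P]^W$, giving an identity $S(\lambda)\,Q = S(d(\lambda)\rho)\,D(\mu)$ for some $W$-invariant $Q$. The plan is to exploit the product factorization of Proposition \ref{prop:weyldenominator}: since $D(\mu)$ is a product of single-root expressions $(e^{d(\mu)\alpha/2} - e^{-d(\mu)\alpha/2})$, the pairwise coprimality of distinct-root factors (Corollary \ref{cor:coprimeroots}) pins down every $W$-invariant divisor of $D(\mu)$ as itself being of generalized Weyl denominator type. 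Combining this with Theorem \ref{thm:unique} would force $\lambda$ to be a multiple of $\rho$ (or $\tilde{\rho}$), making $C(\lambda)$ trivial and yielding the contradiction. The technical crux is making this ``shape-forcing'' argument rigorous in $\C[P]^W$, where factorization is coarser than in $\C[P]$; a natural route is to match leading terms in the cofactor expansion of Section \ref{sec:cofactor} along a corner root and induct on the rank of $\cR$, using Theorem \ref{thm:main} and Theorem \ref{thm:unique} inductively on the rank-$(r-1)$ root system $\cR_\alpha$ to propagate coprimality.
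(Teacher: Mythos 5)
Your decomposition and your first two checks coincide with the paper's own (very short) proof: the paper likewise disposes of $C(\lambda)$ versus $C(\mu)$ by irreducibility (Theorem \ref{thm:main}) plus the uniqueness Theorem \ref{thm:unique}, and of the Weyl-denominator-type parts by Corollary \ref{cor:coprimeweyl}, using exactly the $m(\Phi)$-adic observation that $d(\lambda)=m(\Phi)^{i}d'(\lambda)$ with $d'(\lambda)$ prime to $m(\Phi)$, which your hypothesis-checking reproduces; your root-by-root treatment of the uniform case via Proposition \ref{prop:weyldenominator} and Corollary \ref{cor:coprimeroots} is a correct variant of the proof of Corollary \ref{cor:coprimeweyl}. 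Up to that point the argument is correct and essentially the paper's.

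The gap is in the cross term, which you rightly identify as a needed third check but do not close. Your pivotal assertion --- that Corollary \ref{cor:coprimeroots} ``pins down every $W$-invariant divisor of $D(\mu)$ as itself being of generalized Weyl denominator type'' --- is false as stated: already in rank one, where $W$ acts by inversion, the invariant element $e^{\rho}+e^{-\rho}-(\zeta+\zeta^{-1})$, with $e\ge 3$ and $\zeta$ a primitive $2e$-th root of unity, divides $S(e\rho)$ in $\C[P]$ but is not of Weyl denominator type. What the product formula in Proposition \ref{prop:weyldenominator}(a) actually yields is that every irreducible factor of $D(\mu)$ in $\C[P]$ is a binomial $e^{\gamma}-c$; so if the irreducible $C(\lambda)$ divided $D(\mu)$, then $S(\lambda)=S(d(\lambda)\rho)C(\lambda)$ would be, up to a monomial, a product of binomials, and one must then prove that this forces $\lambda$ to be a multiple of $\rho$ or $\tilde{\rho}$. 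That is a genuine argument (for instance via the Newton polytope of $S(\lambda)$, whose vertices are the $W$-orbit of $\lambda$ and which would have to be a zonotope, or via the rank induction on cofactor expansions you gesture at), and it does not follow from Theorem \ref{thm:unique}, which requires an equality of quotients rather than a divisibility. Be aware, finally, that the paper's one-line proof records only the two checks you completed and passes over this cross term in silence; so the issue you flagged is real, but your proposal leaves it, by your own admission, unproven.
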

\begin{proof} By Theorem \ref{thm:main}, the factors $C(\lambda)$ and
  $C(\mu)$ if
  non-trivial are irreducible. If they coincide, then by the
  uniqueness Theorem \ref{thm:unique}, $\lambda=\mu$. Hence we are
  left with showing the Weyl denominator type of  $S(\lambda)/S(d\rho)$
and  $S(\mu)/S(d\rho)$ are coprime. Suppose $\lambda\in
m(\Phi)^iP\backslash m(\Phi)^{i+1}P^*$. Then
$d(\lambda)=m(\Phi)^id'(\lambda)$ with $d'(\lambda)$ coprime to
$m(\Phi)$. The corollary follows from Corollary \ref{cor:coprimeweyl}.
\end{proof}

Finally, we deduce a coprimality result needed in the proof of
Proposition \ref{prop:nonmonic}:
\begin{proposition}\label{prop:coprime}
  Let $\cR=(E, \Phi, \Delta)$ be a simple based root
system of rank $l$  and assume that Theorem \ref{thm:main} holds for
any simple root system of rank less than $l$. Suppose $\eta\in
m(\Phi)^iP\backslash m(\Phi)^{i+1}P^*$. Let $\alpha$ be a corner root
in the Dynkin diagram associated to $\cR$. Then the elements
$S(\eta^{\alpha})/S(d(\eta)\rho^{\alpha})$
and  $S((s_{\alpha}\eta)^{\alpha})/S(d(\eta)\rho^{\alpha})$ are
coprime in the ring $\C[P_{\alpha}]$.
\end{proposition}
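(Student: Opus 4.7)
The plan is to reduce the coprimality assertion to Corollary \ref{cor:coprime} applied inside the rank $l-1$ simple root system $\cR_{\alpha}$. Since $\alpha$ is a corner root of $\cR$, removing it from the Dynkin diagram leaves a connected sub-diagram, so $\cR_{\alpha}$ is simple of rank $l-1<l$, and by the standing hypothesis Theorem \ref{thm:main} holds for $\cR_{\alpha}$, making Corollary \ref{cor:coprime} available there.

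By the cofactor formula (Lemma \ref{lem:cofexp}), $(s_{\alpha}\eta)^{\alpha}=\eta^{\alpha}+|m_{\alpha_n}(\alpha)|\,m_{\alpha}(\eta)\,\omega_{\alpha_n}^{\alpha}$, where $\alpha_n$ is the unique neighbor of $\alpha$; both are dominant regular weights of $\cR_{\alpha}$, and they are distinct because $m_{\alpha}(\eta)>0$ by regularity of $\eta$. To apply Corollary \ref{cor:coprime} to this pair, two things must be checked: (a) the gcd $d':=\gcd(d(\eta^{\alpha}),d((s_{\alpha}\eta)^{\alpha}))$ coincides with $d(\eta)$, so the corollary's denominator $S(d'\rho^{\alpha})$ is exactly the denominator $S(d(\eta)\rho^{\alpha})$ appearing in the statement; and (b) there is an integer $j\ge 0$ such that both weights lie in $m(\Phi_{\alpha})^j P_{\alpha}$ and at most one lies in $m(\Phi_{\alpha})^{j+1}P_{\alpha}^*$.

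For (a), a direct computation gives
\[d'=\gcd\bigl(\{m_{\beta}(\eta):\beta\in\Delta_{\alpha}\},\;|m_{\alpha_n}(\alpha)|\,m_{\alpha}(\eta)\bigr).\]
When $|m_{\alpha_n}(\alpha)|=1$ (including the simply laced case) this is manifestly $d(\eta)$. The delicate case is $|m_{\alpha_n}(\alpha)|=m(\Phi)\in\{2,3\}$, which by inspection of the Dynkin diagrams forces $\alpha$ long and $\alpha_n$ short. In that case the hypothesis $\eta\in m(\Phi)^i P\setminus m(\Phi)^{i+1}P^*$ produces a short $\gamma\in\Delta$ with $v_{m(\Phi)}(m_{\gamma}(\eta))=i$; since $\alpha$ is long, $\gamma\in\Delta_{\alpha}$, which forces $v_{m(\Phi)}\bigl(\gcd_{\beta\in\Delta_{\alpha}}m_{\beta}(\eta)\bigr)=i$, and then a $p$-adic analysis at the prime $p=m(\Phi)$ shows the extra factor $m(\Phi)$ in front of $m_{\alpha}(\eta)$ cannot raise $v_p(d')$ past $i=v_p(d(\eta))$. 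For (b), take $j=0$ trivially if $\cR_{\alpha}$ is simply laced; otherwise $m(\Phi_{\alpha})=m(\Phi)$ and take $j=i$. All coefficients of $\eta^{\alpha}$ and $(s_{\alpha}\eta)^{\alpha}$ lie in $m(\Phi)^i\mathbb{Z}$, and a small classification check of the pairs $(\cR,\alpha)$ producing a non-simply-laced $\cR_{\alpha}$ (namely $B_r$ or $F_4$ with $\alpha$ long, and $C_r$ or $F_4$ with $\alpha$ short, each with a specific $\alpha_n$) shows that the short $\gamma$ from the hypothesis always witnesses failure of the $m(\Phi)^{i+1}P_{\alpha}^*$ condition for at least one of the two weights: either $\gamma\in\Delta_{\alpha}$ kills both, or $\gamma=\alpha$ (only possible when $\alpha$ short), in which case the modified $\alpha_n$-coefficient of $(s_{\alpha}\eta)^{\alpha}$ is not divisible by $m(\Phi)^{i+1}$ when $\alpha_n$ is short.

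With (a) and (b) verified, Corollary \ref{cor:coprime} applied in $\cR_{\alpha}$ yields the required coprimality of $S(\eta^{\alpha})/S(d(\eta)\rho^{\alpha})$ and $S((s_{\alpha}\eta)^{\alpha})/S(d(\eta)\rho^{\alpha})$ in $\C[P_{\alpha}]$. The main obstacle is the bookkeeping of $m(\Phi)$-adic valuations in the non-simply-laced configurations, but it is controlled by the primality of $m(\Phi)\in\{2,3\}$ together with the rigid short–long adjacency patterns at corner roots forced by the Dynkin classification.
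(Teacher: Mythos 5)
Your proposal is correct and follows essentially the same route as the paper: project along the corner root and apply Corollary \ref{cor:coprime} in $\cR_{\alpha}$, the key point being that the hypothesis $\eta\in m(\Phi)^iP\setminus m(\Phi)^{i+1}P^*$ forces at most one of $\eta^{\alpha}$, $(s_{\alpha}\eta)^{\alpha}$ to lie in $m(\Phi)^{i+1}P_{\alpha}^*$ (the paper observes this can only be at issue when $\alpha$ and its neighbour $\alpha_n$ are both short). Your verification that $\gcd\bigl(d(\eta^{\alpha}),d((s_{\alpha}\eta)^{\alpha})\bigr)=d(\eta)$, so that the denominator $S(d(\eta)\rho^{\alpha})$ is exactly the one required by the corollary, is a point the paper leaves implicit and is a worthwhile addition.
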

\begin{proof}
 The multiplicity   at a fundamental root
$\gamma \in \Delta_{\alpha}$ of a weight $\mu^{\alpha}$ differs from that of
$\mu$ only at the root $\alpha_n$ connected to $\alpha$ in the
Dynkin diagram.  Suppose  $\eta^{\alpha}$ or $(s_{\alpha}\eta)^{\alpha}$ belongs to
$m(\Phi)^{i+1}P_{\alpha}^*$, where we have assumed $\cR_{\alpha}$ is
not simply laced.
From the hypothesis this can happen only when both $\alpha_n$
and  $\alpha$ are short roots. Hence,
\[ (s_{\alpha}\eta)^{\alpha}=\eta^{\alpha}+m_{\alpha}(\lambda).\]
The hypothesis on $\eta$ implies that at most one of
 $\eta^{\alpha}$ or $(s_{\alpha}\eta)^{\alpha}$ belongs to
 $m(\Phi)^{i+1}P_{\alpha}^*$.
The proposition now follows from Corollary \ref{cor:coprime}.
\end{proof}

\subsection{Unique decomposition of tensor products} We now indicate a
proof of Theorem \ref{thm:tensor} stated in the introductory section,
and our original motivation for establishing an irreducibility
result. Unfortunately as the proof of Theorem \ref{thm:main} that we
have presented out here has a gap,  we need to assume {\em
  Assumption NMFG}.

Suppose we have an isomorphism of tensor products
\[ V_1\otimes \cdots\otimes V_n\simeq W_1\otimes
\cdots\otimes W_m,\]
as in the hypothesis of Theorem \ref{thm:tensor}. Let $(\lambda_i)$
(resp. $\mu_j$) denote the highest weight of the irreducible
representation $V_i$ (resp. $W_j$). The above hypothesis translates to
an equality of products of characters:
\[\prod_{i=1}^n S(\lambda_i+\rho)/S(\rho)=\prod_{j=1}^m
S(\mu_j+\rho)/S(\rho).\]
By the uniqueness Theorem \ref{thm:unique} and the irreducibility
Theorem \ref{thm:main}, if one of the factors, say $\lambda_1+\rho$ is
not a multiple of $\rho$, then there exists a $j$ such that
$\lambda_1+\rho=\mu_j+\rho$. Cancelling these factors, we are left
with an equality involving fewer characters and we are done.

Hence, we are reduced to the case that each of these weights
$\lambda_i+\rho$ and $\mu_j+\rho$ ($i=1, \cdots, n,
~j=1, \cdots, m$) is a multiple of either $\rho$ or $\tilde{\rho}$.
By the coprimality result Corollary \ref{cor:coprimeroots},
we have the following
equalities for any root $\alpha$:
\[\prod_{i=1}^n\frac{e^{d_i/2\alpha}-e^{-d_i/2\alpha}}{e^{\alpha/2}-e^{-\alpha/2}}=\prod_{j=1}^m\frac{e^{e_j/2\alpha}-e^{-e_j/2\alpha}}{e^{\alpha/2}-e^{-\alpha/2}},\]
for some natural numbers $d_i, ~e_j$ (depending also on the relative
length of the root).
An easy argument using roots of these expressions, establishes Theorem
\ref{thm:tensor}.

\section{Reduction to the invariant case for $GL(r)$}\label{sec:noninv}
In this section our aim is to extend the irreducibility result for
$C(\lambda)$ in the ring $\C[P]^W$ given by Theorem \ref{thm:main} to
the larger ring $\C[P]$, when we are working with $GL(r)$,.i.e., to
show the irreducibility of $C(\lambda)$ in the polynomial ring $C[x_1,
\cdots, x_r]$, as claimed in Theorem \ref{thm:glrirr}.

Suppose $\lambda$ is a dominant integral weight for $GL(r)$ which is
not a multiple of $\rho$. By Lemma \ref{lem:leadcoeffs}, there is at
least one corner root, say  $\alpha$,  such that the leading coefficient
 of
$C(\lambda)$ along $\alpha$ is not a unit (for $GL(r)$, this is clear:
if $C(\lambda)$ is monic as a polynomial in $x_1$, then being
symmetric the constant term is of the form
$x_2^{a_1-rd(\lambda)}+\cdots +x_r^{a_1-rd(\lambda)}$).

Suppose $\lambda^{\alpha}$ is not a multiple of $\rho^{\alpha}$. Then
there exists a smallest factor $U$ of $C(\lambda)$ in the ring $C[x_1,
\cdots, x_r]$, such that its leading coefficient along $\alpha$ is
divisible by the irreducible component $C(\lambda)^{\alpha}$ of the
leading coefficient of   $C(\lambda)$ along $\alpha$. By construction,
$U$ is irreducible, and since the polynomials $S(\mu)$ are separable
for any regular weight $\mu$, $U$ will also be
$W_{\alpha}$-invariant.

If $V=C(\lambda)/U$ is monic, then by Lemma \ref{lem:glrmonicnonsym}
proved below, both $U$ and $V$ will be symmetric, and the
irreducibility result follows by Theorem \ref{thm:main}.

On the other hand, if the leading coefficient of $V$ along $\alpha$ is
not a unit, then just as in reduction part of the proof of Theorem
\ref{thm:main} to that of Proposition \ref{prop:key},  we have reduced
the irreducibility statement to Proposition \ref{prop:key} for lower
rank.

Hence, we have reduced to the case that $\lambda^{\alpha}$ is a
multiple of $\rho^{\alpha}$ for any corner root  $\alpha$. If $r\geq
4$, the rank is at least $3$, and this implies that $\lambda$ is a
multiple of $\rho$ contradicting our hypthesis on $\lambda$.

Thus we are in the $GL(3)$ situation: if the factor $U$ is irreducible
and the quotient $V$ as above is monic, then we are through
 by  Lemma \ref{lem:glrmonicnonsym} as argued above. Otherwise, the
 leading coefficients of both $U$ and $V$ along $\alpha$ are not
 units. But in this case, we observe that Proposition \ref{prop:arith}
or Corollary \ref{cor:key-sl2} is valid
 without any assumptions of symmetry, and hence the irreduciblity
follows as in the reduction of the proof of the main Theorem
\ref{thm:main} to Proposition \ref{prop:key}.

This proves Theorem \ref{thm:glrirr}, modulo the following lemma:

\begin{lemma} \label{lem:glrmonicnonsym} With the above notation, let
  $C(\lambda)=UV$ be a factorization of $C(\lambda)$ in the ring
$C[x_1,\cdots, x_r]$.
Assume further the following: $U$ is irreducible, and $V$ is either
monic or the constant coeffecient  $V_0$ of $V$ is a monomial
(considered as a polynomial in $x_1$).  Then $U$  and $V$
are symmetric polynomials.
\end{lemma}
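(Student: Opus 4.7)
The plan is to reformulate the hypothesized factorization in terms of the action of $W = S_r$ on the irreducible factors of $C(\lambda)$ and then exploit the monic (or monomial-constant-term) structure of $V$ to force this action to be trivial. The starting observation is that by Theorem \ref{thm:main}, $C(\lambda)$ is irreducible in $\C[x_1,\dots,x_r]^W$, so its complete set of irreducible factors in $\C[x_1,\dots,x_r]$ forms a single $W$-orbit with each factor appearing with multiplicity one. In particular $V$ coincides, up to a scalar, with the product of the $W$-translates of $U$ other than $U$ itself, and $\mathrm{Stab}_W(U)$ is the subgroup that must be determined. I would focus on the monic case, since the case that $V_0$ is a monomial is symmetric, with the r\^oles of the two corner roots $\alpha = e_1 - e_2$ and $\beta = e_{r-1} - e_r$ interchanged.

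In the monic case, $V_v = 1$ forces the leading coefficient of $U$ in $x_1$ to equal the full leading coefficient of $C(\lambda)$ along $\alpha$, namely $C(\lambda^{(1)}, d)$, which by the inductive hypothesis on $r$ is irreducible and $W_\alpha$-invariant. For any $\sigma \in W_\alpha$, which fixes $x_1$, the factor $\sigma(U)$ is an irreducible factor of $C(\lambda)$ with the same leading coefficient $U_u$; since all remaining irreducible factors (those of the monic $V$) must have constant leading coefficient in $x_1$, $\sigma(U)$ is forced to be associate to $U$. The resulting character of $W_\alpha$ cannot be the sign character, because $U_u$ is strictly $W_\alpha$-invariant rather than alternating. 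Thus $U$ itself is $W_\alpha$-invariant, so $\mathrm{Stab}_W(U) \supseteq W_\alpha$; and because $W_\alpha = S_{r-1}$ is a maximal subgroup of $W = S_r$, the stabilizer is either $W_\alpha$ or $W$. If it equals $W$, then both $U$ and $V$ lie in the invariant ring, and Theorem \ref{thm:main} immediately yields the conclusion.

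The main obstacle is to rule out the remaining case in which $\mathrm{Stab}_W(U) = W_\alpha$ and the $W$-orbit of $U$ has size $r$. Here $V$ equals, up to a scalar, the product $\prod_{k=2}^r \tau_k(U)$ with $\tau_k := (1\;k)$, and the monicity of $V$ in $x_1$ demands that each $\tau_k(U)$ have a constant leading coefficient in $x_1$. A direct computation identifies this condition with the requirement that the coefficient of $x_k^{u^*}$ in $U$ be a nonzero constant for every $k \ge 2$, where $u^* := \deg_{x_k} U$; equivalently, $\deg_{x_k} U_u < u^*$ strictly. I would combine this with the Eisenstein-style lower bound $u \ge m_\alpha(\lambda) = a_1 - a_2$ from Lemma \ref{lem:eisencrit}, the identity $u + (r-1)u^* = a_1 - d(r-1)$ coming from the degree of the orbit product, and the explicit value $\deg_{x_k} U_u = a_2 - d(r-2)$, to derive the inequality $(r-2)\,a_2 < d(r-1)(r-3)$. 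For $r = 3$ this already forces $a_2 < 0$, which is absurd. For $r \ge 4$ a handful of arithmetic cases survive; I would close them by running the parallel analysis at the constant coefficient of $C(\lambda)$ in $x_1$, using the structure $C(\lambda)|_{x_1 = 0} = (x_2 \cdots x_r)^{a_{r-1} - d}\, C(\lambda^{(r-1)}, d)$ together with the coprimality statement of Proposition \ref{prop:coprime} between $C(\lambda^{(1)}, d)$ and $C(\lambda^{(r-1)}, d)$, to obtain a second inequality that combined with the first leaves no admissible $\lambda$.
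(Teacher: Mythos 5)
Your proposal is correct in substance, and its first half coincides with the paper's argument: since $V$ is monic in $x_1$, the leading coefficient of $U$ must equal (up to a constant) the leading coefficient $C(\lambda^{(1)},d)$ of $C(\lambda)$, which is symmetric in $x_2,\dots,x_r$; hence for $\sigma$ fixing $x_1$ the translate $\sigma(U)$ cannot divide the monic $V$ and must be associate (in fact equal) to $U$, so $U$ is invariant under $S_{r-1}=W_\alpha$. Two small corrections here: you do not need, and cannot in general assert, that $C(\lambda^{(1)},d)$ is irreducible (it is so only when $d(\lambda^{(1)})=d(\lambda)$ and $\lambda^{(1)}$ is not a multiple of $\rho$); all that is used is that it is non-constant and $W_\alpha$-invariant, and the degenerate case where it \emph{is} constant has to be disposed of first, exactly as the paper does by its opening reduction (if both extreme coefficients of $C(\lambda)$ are trivial then $\lambda=d\rho$ and $C(\lambda)=1$).

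Where you genuinely diverge is the endgame. The paper does not pass to the full $W$-orbit: it takes the single transposition $\sigma=(1\,2)$, notes that if $U^{\sigma}\neq U$ then $U^{\sigma}\mid V$, and concludes by the two-line count $v\geq \deg_{x_1}U^{\sigma}=\deg_{x_2}U\geq \deg_{x_2}U_u=a_2-d(r-2)$ together with the Eisenstein-type bound $u\geq a_1-a_2$, which forces $u+v+d(r-1)>a_1=\deg_{x_1}S(\lambda)$, a contradiction; maximality of $S_{r-1}$ in $S_r$ then gives full symmetry of $U$ and hence of $V$. Your route through the orbit product $V=c\prod_{k=2}^{r}\tau_k(U)$ is heavier (the ``single orbit, multiplicity one'' claim needs a short semi-invariance/separability argument, though in fact you only need that the pairwise non-associate conjugates $\tau_k(U)$ all divide $V$), but it does work, and your inequality $(r-2)a_2<d(r-1)(r-3)$ follows correctly from $u\geq a_1-a_2$, $u^*\geq a_2-d(r-2)+1$ and $u+(r-1)u^*=a_1-d(r-1)$. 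The one loose end you leave — ``a handful of arithmetic cases survive'' for $r\geq 4$ — is not actually there: since $\lambda$ is regular and $d=d(\lambda)$ divides every $a_i$, one has $a_2\geq (r-2)d$, and substituting this into your inequality gives $d(r-2)^2\leq d(r-1)(r-3)-(r-1)$, i.e.\ $0\leq -d-(r-1)$, a contradiction for every $r\geq 3$; so no second inequality from the constant coefficient is needed. In short: same key step as the paper, a more elaborate but valid conclusion, and the part you deferred closes immediately (or can be replaced by the paper's one-transposition degree count).
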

\begin{proof} We first observe that if  both the leading and constant
coeffecients of $C(\lambda, d(\lambda))$ are trivial.  then $\lambda^{(1)}=
\lambda^{(r-1)}=d\rho$. This implies $\lambda=d\rho$  and so  $C(\lambda,
d(\lambda))=1$. Hence assume that the leading coeffecient of
$C(\lambda, d(\lambda))$ is not monic, and that $V$ is monic.
From the proof of Proposition \ref{prop:nonmonicglr}, we get
\[ u\geq a_1-a_2.\] Since the leading coeffecients of $U$ and
$C(\lambda, d(\lambda))$ match, and $U$ is irreducible, we see that
$U$ is fixed by the subgroup $S_r(1)$ of the symmetric group $S_r$ of
permutations of the set ${1,\cdots, r}$ fixing the element $1$. Let
$\sigma$ be the transposition in $S_r$ interchaning $1$ and $2$. If
$\sigma$ fixes $U$, then since the group generated by $\sigma$ and
$S_r(1)$ is $S_r$, we conclude that $U$ is symmetric.  If $U$ is not
symmetric, then  since $U$ is irreducible, $U^{\sigma}$ must divide
$V$. Now the degree in $x_2$ variable of $U$ is at least $a_2-d$.
Hence,
\[v\geq \mbox{deg}_{x_1}U^{\sigma}\geq \mbox{deg}_{x_2}U\geq a_2-d.\]
This yields,
\[\mbox{deg}_{x_1}(S(\lambda))=a_1=u+v+2d\geq a_1-a_2+a_2-d+2d>a_1,\]
a contradiction.
\end{proof}

\begin{center}
{\bf Acknowledgement}
\end{center}
I sincerely thank P. Deligne for indicating the extra reducibility
that happens for the non-simply laced simple Lie algebras arising from
duality (as soon as I stated the conjecture for $GL(r)$).
This nipped the problem in the bud  of
figuring out this extra reducibility in the general case.

Most of this work was done when I was visiting MPIM at Bonn during
2005-06.
Apart from streamlining the proofs, especially the
inductive proof of Proposition
\ref{prop:key}, the main new ingredient is the reduction to the
symmetric case given in Section \ref{sec:noninv} which was obtained
when I was visiting Muenster during the summer of 2008, supported
by SFB 478.  My sincere
thanks to these institutions, to IAS at Princeton where I
visited during the first half of 2007 and to ICTP, Trieste (October
2011) for their excellent hospitality and working environment while I
worked on this and other questions.


\begin{thebibliography}{JPSH}
\bibitem[B]{B} N. Bourbaki, {\em Lie groups and Lie algebras}.

\bibitem[DZ]{DZ} R. Dvornicich and U. Zannier,  {\it Newton functions
generating symmetric fields and irreducibility of Schur polynomials},
Adv. Math. 222 (2009), no. 6, 1982-2003.

\bibitem[FI]{FI} R. Fossum and B. Iversen, {\it On Picard groups of
algebraic fibre spaces},  J. Pure Appl. Algebra 3 (1973), 269-280.

\bibitem[FH]{FH} W. Fulton and J. Harris, {\em Representation theory:
a first course}, Graduate texts in mathematics, 129,
Springer-Verlag, New York, 1991.

\bibitem[H]{H} J. E. Humphreys, {\it Introduction to Lie algebras and
representation theory}, Springer-Verlag, New York, 1972.

\bibitem[KKLV1]{KKLV1}  F. Knop, H. Kraft  and T. Vust, {\it The Picard
group of a $G$-variety} in Algebraic transformation groups and
invariant theory, 77-87 eds. H. Kraft, P. Slowdowy and T. A. Springer,
DMV Seminar, Band 13, Birkhauser-Verlag,  Berlin, 1989.

\bibitem[KKLV2]{KKLV2}  F. Knop, H. Kraft, D. Luna and T. Vust, {\it
Local properties of algebraic  group actions} in Algebraic
transformation groups and invariant theory, 63-75 eds. H. Kraft,
P. Slowdowy and T. A. Springer,  DMV Seminar, Band 13,
Birkhauser-Verlag,  Berlin, 1989.

\bibitem[R]{R} C. S. Rajan, {\em Unique decomposition of tensor
products of irreducible representations of simple algebraic groups},
Annals of Mathematics, 160, (2004), 683-704.



\end{thebibliography}
\end{document}